\pgfplotsset{compat=1.16}
\crefname{enumi}{}{}
\crefname{theorem}{Theorem}{Theorems}
\crefname{corollary}{Corollary}{Corollaries}
\crefname{proposition}{Proposition}{Propositions}
\crefname{lemma}{Lemma}{Lemmas}
\crefname{claim}{Claim}{Claims}
\crefname{conjecture}{Conjecture}{Conjectures}
\crefname{problem}{Problem}{Problems}
\crefname{question}{Question}{Questions}
\crefname{definition}{Definition}{Definitions}
\crefname{remark}{Remark}{Remarks}
\crefname{example}{Example}{Examples}
\newcommand{\crefpatch}[1]{%
  \AtBeginEnvironment{#1}{%
    \let\orig@label\label
    \renewcommand{\label}[1]{\orig@label[#1]{##1}}%
  }%
}
  \def\cref#1{\ref{#1}}%
  \def\Cref#1{\ref{#1}}%
  \let\wrtusdrf\@gobble
\newtheorem{theorem}{Theorem}[section]
\newtheorem*{theorem*}{Theorem}
\newtheorem*{theoremY*}{Theorem Y}
\newtheorem*{theoremAB*}{Theorem AB}
\newtheorem{corollary}[theorem]{Corollary}
\newtheorem*{corollary*}{Corollary}
\newtheorem{proposition}[theorem]{Proposition}
\newtheorem{lemma}[theorem]{Lemma}
\newtheorem*{claim*}{Claim}
\newtheorem{conjecture}[theorem]{Conjecture}
\theoremstyle{definition}
\newtheorem{definition}[theorem]{Definition}
\theoremstyle{remark}
\newtheorem{remark}[theorem]{Remark}
\newtheorem*{remark*}{Remark}
\renewcommand{\Bbb}[1]{\mathbb{#1}}
\newcommand{\bbN}{{\Bbb N}}         % natural numbers
\newcommand{\bbR}{{\Bbb R}}        % real numbers
\newcommand{\0}{\mathbf{0}}
\newcommand{\ie}{{\it i.e.}\/ }
\newcommand{\bi}{\mathbf{i}}
\newcommand{\bj}{\mathbf{j}}
\newcommand{\bk}{{\mathbf{k}}}
\newcommand{\bh}{{\mathbf{h}}}
\newcommand{\bu}{\mathbf{u}}
\numberwithin{equation}{section}
\title{On exponential separation of analytic self-conformal sets on the real line}
\author{Bal\'azs B\'ar\'any\\(BME) \and Istv\'an
  Kolossv\'ary\\ (R\'enyi Institute) \and Sascha
Troscheit \\(Uppsala)}
\date{\today}
\newcommand{\Addresses}{{% additional braces for segregating \footnotesize
  \bigskip
  \footnotesize

  B.~B\'ar\'any, \textsc{Department of Stochastics, HUN-REN-BME Stochastics Research Group,
  Institute of Mathematics, Budapest University of Technology and Economics, M\H{u}egyetem rkp.~3.,
H-1111 Budapest, Hungary.}\par\nopagebreak
  \textit{E-mail address}, B.~B\'ar\'any: \texttt{barany.balazs@ttk.bme.hu}

  \medskip

  I.~Kolossv\'ary, \textsc{HUN-REN Alfr\'ed R\'enyi Institute of Mathematics, 1053 Budapest, Re\'altanoda u.
13–15, Hungary}\par\nopagebreak
  \textit{E-mail address}, I.~Kolossv\'ary: \texttt{istvanko@renyi.hu}
   \medskip

  S.~Troscheit, \textsc{Department of Mathematics, Box 480, 751 06 Uppsala University, Sweden.}\par\nopagebreak
  \textit{E-mail address}, S.~Troscheit: \texttt{sascha.troscheit@math.uu.se}
}}
\begin{document}

\frenchspacing
\maketitle

%{\color{red}
%How can keywords, MSC appear at bottom footnote in this style? Should funding be taken to the end with an Acknowledgemnet section? \\
%2020 Mathematics Subject Classification.  Primary 28A80; Secondary 37C45 \\
%Key words and phrases. Analytic iterated function system, exponential separation condition,
%dimension drop conjecture, dual IFS, conjugation to self-similar IFS, ?? 
%}

\begin{abstract}
In a recent article, Rapaport showed that there is no dimension drop for
exponentially separated analytic IFSs on the real line. We show that the set of such exponentially
separated IFSs in the space of analytic IFSs contains an open and dense set in the $\mathcal{C}^2$
topology. Moreover, we give a sufficient condition for the IFS to be exponentially separated which
allows us to construct explicit examples which are exponentially separated. The key
technical tool is the introduction of the \emph{dual IFS} which we believe has significant interest
in its own right. As an application we also characterise when an analytic IFS can be conjugated to a
self-similar IFS. 
\end{abstract}

%%%%%%%%%%%%%%%%%%%%%%%%%%%%%%%%%%%
%
%	INTRODUCTION
%
%%%%%%%%%%%%%%%%%%%%%%%%%%%%%%%%%%%

%%%%%%%%%%%%%%%%%%%%%%%%%%%%%%%%%%%%%%%%%%%%%%%%%%%%%%%%%%%%%%%%%%%%%%%%%%%
\section{Introduction and main results}
The geometric properties of attractors of iterated function systems have been extensively studied in recent
decades.
An iterated function system (IFS) $\Phi$ is a finite collection of strictly contracting self-maps
$(f_i)_{i\in\mathcal{I}}$ on a complete separable metric space $X$. By a result of
Hutchinson~\cite{Hutchinson_Attractor_81}, there exists a unique non-empty compact set $\Lambda$
satisfying the
invariance
\begin{equation}\label{eq:Hutchinson}
  \Lambda = \bigcup_{i\in\mathcal{I}} f_i(\Lambda).
\end{equation}
Similarly, one can consider measures invariant under this relation in the following sense.
Given an IFS and a non-degenerate probability vector $\mathbf{p}=(p_i)_{i\in\mathcal{I}}$, \ie
$\sum_{i\in\mathcal{I}}p_i=1$ and all $p_i>0$, there exists a unique Borel probability measure
$\mu_{\mathbf{p}}$ supported on $\Lambda$ such that
\begin{equation}\label{eq:conformaMeasure}
\mu_{\mathbf{p}}=\sum_{i \in \mathcal{I}} p_i \cdot \mu_{\mathbf{p}} \circ f_i^{-1}.
\end{equation}
The size of such sets and measures, as measured through dimension, is one of the main focus points of fractal
geometry. Answering questions in such generality is generally unfeasible and one often restricts to
the simpler setting of $X=\mathbb{R}^d$ and where the $f_i$ are simpler mappings such as
similarities, affinities, or conformal maps.

Hutchinson \cite{Hutchinson_Attractor_81} considered the Hausdorff dimension of the attractor of
IFSs consisting of similarities on $\mathbb{R}^d$ under a separation condition, the open set
condition (OSC), and provided a formula for the dimension of the sets depending solely on the
contraction ratios of the similarities. In particular, the OSC holds under the stronger assumption
of the strong separation condition (SSC). We say that the IFS $\Phi=(f_i)_{i\in \mathcal{I}}$
satisfies the SSC if
\begin{equation*}%\label{eq:SSC}
  f_i(\Lambda)\cap f_j(\Lambda) = \varnothing\quad\text{whenever}\quad i\neq j.
\end{equation*}
Hutchinson also showed that this value provides an upper bound for
the dimension regardless of the overlaps of images of $\Lambda$ under the $f_i$.  Bowen
\cite{Bowen75} and Ruelle \cite{Ruelle2004} extended this result to attractors of
$\mathcal{C}^{1+\alpha}$ conformal mappings, with the natural upper bound known as the conformality
dimension. Similar questions can be asked for the measure, see Cawley and Mauldin
\cite{CawleyMauldin92}, and Patschke \cite{Patzschke97}, for the self-similar and self-conformal
setting. 

The actual value of the Hausdorff dimension of the attractor may drop below this natural upper
bound. This occurs, for instance, when the maps have exact overlaps and it is an open problem (the
dimension drop conjecture) whether this is the only mechanism for a dimension drop to occur, see
Simon \cite{Simon1996}.  It was since verified that the conformality dimension coincides with the
Hausdorff dimension, at least typically, for several notions of typicality.

Simon, Solomyak, and Urba\'nski \cite{SimonSolomyakUrbanski2,SimonSolomyakUrbanski1} considered
parametrised families of $C^{1+\alpha}$ IFSs and showed that under some technical assumptions
(transversality condition) there is no dimension drop for the set and measure for almost every
parameter with respect to the Lebesgue measure. Relying on transversality methods, Simon and
Solomyak \cite{SimonSolomyak02} showed that for self-similar sets the set of exceptions where a
dimension drop occurs is a meagre set in the Baire category sense. 

Other approaches, involving different notions of dimension or separation conditions were also
considered, see
Zerner \cite{Zerner1996}; Lau and Ngai \cite{LauNgai1999}; Ngai and Wang \cite{NgaiWang2001}; Fraser,
Henderson, Olson, and Robinson \cite{FHOR2015}; and Angelevska, K\"aenm\"aki, and Troscheit
\cite{AngelevskaKaenmakiTroscheit2020}; and references therein.

A major breakthrough was made by Hochman \cite{Hochman_SelfSimESC_Annals} who showed that no
dimension drop occurs for self-similar sets and measures on the line under the
assumption of the exponential separation condition (ESC).
The ESC is a condition that is satisfied under many natural assumptions and also holds for many typical
systems including those described above. 
For instance, it was shown in~\cite{Hochman_SelfSimESC_Annals}
that if the parameters defining the self-similar IFS $\Phi$ are algebraic and $\Phi$ has no exact
overlaps then the ESC holds. The algebraic condition has since been relaxed in certain settings,
see~\cite{FengFeng_DimHomoIFSAlgebraicTrans, Rapaport_ExactOverlapsAlgebraicContr,
RapaportVarju_Duke24, Varju_BernConv_Annals19}.
Generalisations have also been made to higher dimensions by Hochman \cite{Hochman2017} and for
special non-linear maps, M\"obius transformations, by Hochman and Solomyak
\cite{HochmanSolomyak_Invent17}.

Recently, Rapaport \cite{Rapaport_SelfConfESC25arXiv} extended the concept of the exponential
separation condition to general
analytic self-conformal iterated function systems. Furthermore, under this condition, Rapaport
showed that the dimension of the set and the measure does not drop.
Rapaport pointed out that the analyticity assumption is crucial for the proof of his main result.
In fact, his methods fail even for $C^\infty$-maps.
For analytically parametrised systems of analytic self-conformal IFSs, Rapaport verified that the
ESC holds for almost every choice of parameter in the sense of Hausdorff dimension.
However, he could not provide any concrete examples other than those already known.

The main purpose of this article is to provide verifiable sufficient conditions that guarantee that the
ESC holds. Further, we show that this property holds for an open and dense set of IFSs with respect to the
$\mathcal{C}^2$ topology. We will also explore when analytic self-conformal IFS can be conjugated to
self-similar systems.
The methods involve constructing a \emph{dual} IFS, which we believe is of independent interest. 
Just like in the case of Rapaport, the assumption of analyticity is crucial for our methods as well,
since at various places in the proof we need to transition from local information to global bounds.
To ensure sufficient regularity, the analyticity is a natural condition, see for example
\cite{AlgomEtal_NonLinHyperbolicIFS} and \cite[Example~7.7]{AngelevskaKaenmakiTroscheit2020}.

\paragraph{Acknowledgements.} The authors would like to thank Amir Algom for pointing out various
useful references on conjugation of analytic IFSs to self-similar IFSs. We further thank Ga\'etan
Leclerc and Tuomas Sahlsten for further discussions and the anonymous
referees for their careful reading and helpful comments.

BB acknowledges support from grant NKFI~K142169, and grant NKFI
KKP144059 \textit{Fractal geometry and applications} Research Group. IK is supported by the European
Research Council Marie Sk\l odowska-Curie Actions Postdoctoral Fellowship $\#101109013$ and the
Hungarian NRDI Office grant K142169. ST acknowledges funding from the Lisa \& Carl-Gustav
Esseen's Mathematics Fund.

\subsection{Notations and main results}
For convenience we write $I=[0,1]$.
Throughout, we fix $\epsilon>0$ and  we define the class $\mathcal{S}^\omega_\epsilon(I)$
consisting of maps $f:\mathbb{R}\to\mathbb{R}$ with the following properties:
\begin{enumerate}[label=(\Alph*)]
  \item\label{it:a} $f$ is complex analytic in the open $\epsilon$ complex neighbourhood
    $\mathcal{B}_{2\epsilon}$ of  $I$, where $\mathcal{B}_{\delta}\coloneqq\{z\in\mathbb{C}:\exists
    x\in I,\, |z-x|<\delta\}$,
  \item\label{it:b} $f(I) \subseteq I$ and $f(\overline{\mathcal{B}_{\epsilon}}) \subseteq
    \mathcal{B}_{\epsilon}$,
  \item\label{it:c} $0<|f'(x)|<1$ for all $x\in\overline{\mathcal{B}_{\epsilon}}$.
\end{enumerate}
We write
\[
  d_2(f,g) = \sup_{x\in I} |f(x)-g(x)|+\sup_{x\in I} |f'(x)-g'(x)| +\sup_{x\in I} |f''(x)-g''(x)|
\]
for the $\mathcal{C}^2$ metric and equip the space $\mathcal{S}^\omega_\epsilon(I)$ with the
$\mathcal{C}^2$ topology induced by $d_2$. We note that this space is Polish, \ie a complete
and separable space.
We also consider the space $\mathfrak{S}_N$ of cardinality $N$ IFSs $\Phi=(f_i)_{i=1}^N$ of maps $f_i\in
\mathcal{S}^\omega_\epsilon([0,1])$. With slight abuse of notation, let
$$
d_2((f_i)_{i=1}^N,(g_i)_{i=1}^N)\coloneqq\max_{i\in\mathcal{I}}d_2(f_i,g_i)
$$
be the $\mathcal{C}^2$ metric on the space of IFSs $\mathfrak{S}_N$. Note that we consider an IFS to
be an ordered tuple of maps, hence the single maximising index.

From the finite alphabet $\mathcal{I}\coloneqq\{1,\ldots,N\}$ we construct infinite words
$\bi=(i_1,i_2,i_3,\dots) \in\Sigma \coloneqq \mathcal{I}^{\bbN}$ and finite words
$\bi=(i_1,\ldots,i_k)\in \Sigma_k\coloneqq \mathcal{I}^k$ of length $k\in\mathbb{N}$, where
$\Sigma_0=\{\emptyset\}$ is just the empty word. The length of $\bi\in\Sigma_k$ is $|\bi|=k$ and for
$\bi\in\Sigma$ it is $|\bi|=\infty$. We denote the set of all finite words by $\Sigma_* =
\bigcup_{k=0}^\infty \Sigma_k$. For $\bi\neq\bj\in\Sigma\cup\Sigma_*$ we write $\bi\wedge\bj$
to denote the longest $\mathbf{k}\in\Sigma_*$ such that $\mathbf{k} = (i_1,\dots,
i_{|\mathbf{k}|})=(j_1,\dots,j_{|\mathbf{k}|})$.
For any finite word $(i_1,\dots,i_n)\in\Sigma_*$, we write
\[
f_{i_1,\dots,i_n}\coloneqq f_{i_1}\circ \dots \circ f_{i_n},
\]
where by convention $f_{\emptyset}=\mathrm{Id}$. The natural projection $\pi\colon\Sigma\to\bbR$ defined by
\begin{equation}\label{eq:natProj}
\pi(\bi)\coloneqq\lim_{n\to\infty}f_{i_1,\dots,i_n}(0)
\end{equation}
satisfies $\pi(\Sigma)=\Lambda$, where $\Lambda$ is the attractor satisfying the invariance in
\cref{eq:Hutchinson}. 
Following Rapaport \cite{Rapaport_SelfConfESC25arXiv}, we formally define the exponential separation
condition for analytic self-conformal IFSs.
\begin{definition}
  We say that the IFS $\Phi=(f_i)_{i\in\mathcal{I}}$ 
  satisfies the \emph{exponential separation condition (ESC)} if there exists $c>0$ such that for
  infinitely many $n\in\mathbb{N}$,
  \[
    \sup_{x\in[0,1]} |f_{\bi}(x)-f_{\bj}(x)| \geq c^n
  \]
  for all distinct $\bi,\bj\in\Sigma_n$. If the ESC does not hold,
  then we say that $\Phi$ has \emph{super-exponential condensation}.	
\end{definition}
\begin{definition}\label{def:SESC}
  We say that the IFS $\Phi=(f_i)_{i\in\mathcal{I}}$ 
  satisfies the \emph{strong exponential separation condition (SESC)} if there exists $c>0$ such that
  \[
    \sup_{x\in[0,1]} |f_{\bi}(x)-f_{\bj}(x)| \geq c^n
  \]
  for all $\bi,\bj\in\Sigma_n$ and $n\in\bbN$, where $\bi\neq\bj$. 
  If the SESC does not hold, \ie there exist a sequence $(\eta_n)_n$ and a subsequence
  $n_{\ell}\in\bbN$ and distinct words $\bi,\bj\in\Sigma_{n_\ell}$ such that
  $\log(\eta_n)/n\to-\infty$ and
  \[
    \sup_{x\in[0,1]}|f_{\bi}(x)-f_{\bj}(x)| \leq \eta_{n_\ell},
  \]
  then we say that $\Phi$ has \emph{weak super-exponential condensation}
\end{definition}
\begin{definition}
  We say that the IFS $\Phi=(f_i)_{i\in\mathcal{I}}$ 
 has \emph{exact overlaps} if there exist distinct $\bi,\bj\in\Sigma_*$ such that
 $f_{\bi}(x)=f_{\bj}(x)$ for every $x\in\Lambda$. 
\end{definition}

We remark that the separation conditions satisfy the implications 
\[
  \text{SSC $\Rightarrow$ SESC $\Rightarrow$ ESC
  $\Rightarrow$ no exact overlaps.} 
\]

The main objective of this article is to give generic and explicit conditions under which the
ESC holds for analytic IFSs. Roughly speaking, our first
main result says that the property that an analytic IFS satisfies the SESC is a generic property in
a topological sense.

\begin{theorem}\label{thm:ESCOpenDense}
  The set of IFSs $\{\Phi \colon \Phi\text{ satisfies SESC}\}\subseteq \mathfrak{S}_N$ contains an open and
  dense subset in the $\mathcal{C}^2$ topology.
\end{theorem}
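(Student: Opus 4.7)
I propose to exhibit an open and dense subset $\mathcal{U}\subseteq\mathfrak{S}_N$ contained in $\{\Phi:\Phi\text{ satisfies SESC}\}$, using the \emph{dual IFS} as the main technical device. The natural candidate is the set of $\Phi$ whose dual IFS satisfies a quantitative first-level separation condition; by the sufficient condition for ESC of the kind advertised in the paper's abstract, such a separation should propagate to the SESC estimate $\sup_x|f_\bi(x)-f_\bj(x)|\ge c^n$ with an explicit $c>0$.

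\textbf{Openness.} For $\Phi\in\mathcal{U}$, the dual IFS has uniformly separated images. Its constituent maps depend $\mathcal{C}^2$-continuously on $\Phi$, so the separation is stable under small perturbations of $\Phi$. Koebe-type bounded distortion for analytic conformal contractions then propagates a first-level separation to every depth $n$, yielding SESC with a constant varying continuously with $\Phi$ in a small $\mathcal{C}^2$-neighbourhood. This bootstrap is essential: a direct openness argument starting from SESC itself would fail, since the lower bound $c^n\to 0$ as $n\to\infty$, while a naive perturbation estimate on $f_\bi$ gives an error only uniformly bounded by a quantity of the form $\|f_i-g_i\|_\infty/(1-\max_i\sup_I|f_i'|)$, which does not decay with $n$.

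\textbf{Density.} For any $\Phi_0\in\mathfrak{S}_N$ and $\delta>0$, I would construct $\Phi\in\mathcal{U}$ with $d_2(\Phi_0,\Phi)<\delta$. The space $\mathcal{S}^\omega_\epsilon(I)$ admits many $\mathcal{C}^2$-small analytic perturbations preserving \ref{it:a}--\ref{it:c}, for instance adding small constants to individual $f_i$ or composing with close-to-identity analytic maps holomorphic on $\mathcal{B}_\epsilon$. Restricting to a well-chosen finite-dimensional family of such perturbations, the failure of the separation condition on the dual IFS becomes a proper real-analytic constraint on the parameters, so Lebesgue-almost-every perturbation in the family lies in $\mathcal{U}$.

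\textbf{Main obstacle.} The chief difficulty is the density step. SSC --- the most direct route from SESC --- is not dense in $\mathfrak{S}_N$ (e.g.\ when $\sum_i\sup_I|f_i'|>1$, small $\mathcal{C}^2$-perturbations cannot make the images disjoint), so $\mathcal{U}$ cannot simply be the SSC locus. The dual IFS is introduced precisely to bypass this obstruction: it converts the overlap geometry of $\Phi$ into a simpler object where a separation condition is achievable by arbitrarily small analytic perturbation. The delicate part is verifying both that (i) the dual separation implies SESC for $\Phi$ itself in a $\mathcal{C}^2$-stable way, and (ii) this dual separation can actually be attained by small perturbations of an arbitrary $\Phi_0$, no matter how badly its images overlap.
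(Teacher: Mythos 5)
Your high-level outline matches the paper's strategy closely: introduce the dual IFS, take the candidate open set to be those $\Phi$ whose dual $\Phi^*$ satisfies a strong separation condition (the paper's SSC on $\Lambda^*$), obtain openness from the $\mathcal{C}^2$-continuity of the dual construction (the paper's \cref{lem:cont}), and prove density by perturbation. You also correctly identify the two non-trivial pillars --- that dual separation implies SESC in a stable way, and that dual separation is densely achievable --- and you correctly observe that neither SSC for $\Phi$ nor SESC itself can serve as the open set. The openness side of your sketch is essentially sound (though the appeal to Koebe distortion is unnecessary: the dual maps $F_i$ are affine contractions on a function space with uniform rate $c_{\max}$, so a first-level gap propagates for free, as in \cref{lem:SSCEequiv}).

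The genuine gap is in the density step, which is where almost all of the paper's work (Section~4) lives. You propose that after restricting to a finite-dimensional family of analytic perturbations, ``the failure of the separation condition on the dual IFS becomes a proper real-analytic constraint on the parameters,'' so a.e.\ perturbation works. This is not established and is not obviously true. First, dual SSC is an infinite collection of constraints; to reduce to finitely many you need \cref{lem:SSCEequiv}\,\ref{it:lb}, but the level $n$ there depends on the target IFS and its dual attractor, so the finite-dimensionality is not free. Second, and more seriously, you must show the constraint is \emph{proper} (not identically satisfied) on the chosen family, which is precisely the content you would need to construct: for a self-similar $\Phi_0$ one has $H_{\bi}\equiv 0$ for every $\bi$, the dual attractor is a singleton, and there is no a~priori reason that a generic small perturbation in an arbitrary finite-dimensional slice breaks this degeneracy. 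The paper addresses this head-on with an explicit construction (\cref{prop:AnalyticBumpFunc}): after choosing a finite set of points with disjoint orbits via \cref{lem:Pointsx_ij}, it perturbs each $f_i$ by multiplying with $\exp(\varphi\psi A)$, where $\varphi$ and $\psi$ vanish to prescribed orders at the orbit points and $A$ is a sum of sharp Gaussians. This fixes $g_i,g_i'$ (and $g_i''$ on the $\mathcal{Z}_i$ part) at all orbit points, pushes $g_i''/g_i'$ away from $f_i''/f_i'$ by $\geq\delta$ at the $\mathcal{Y}_i$ points, and keeps $d_2(f_i,g_i)$ as small as desired. The delicate bookkeeping --- which orbit points go into $\mathcal{Y}_i$ versus $\mathcal{Z}_i$, depending on how the bad cylinder pairs overlap --- is exactly what converts the perturbation into a verified dual SSC at a fixed level $n$. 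Your proposal does not supply a mechanism for this, and a generic/transversality argument would still have to prove non-degeneracy of the same quantity, so the difficulty is not actually bypassed.

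A smaller issue: your openness paragraph asserts that the dual separation ``propagates to the SESC estimate $\sup_x|f_\bi(x)-f_\bj(x)|\geq c^n$.'' That propagation is the content of \cref{thm:main}/\cref{thm:DualSSC}, and it is not a Koebe-type distortion argument; it is a contradiction argument that extracts limit words $\bi^*,\bj^*$ from a putative super-exponential condensation and shows $H_{\bi^*}\equiv H_{\bj^*}$ via repeated application of \cref{thm:analyticity} and the $C_k$-bounds on $H_{\bi}^{(k)}$. You should not treat that implication as routine.
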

Our second main result provides a sufficient condition under which an analytic IFS satisfies the
SESC. We demonstrate in~\cref{sec:examples} that it can be used to construct completely explicit
examples of analytic IFSs that satisfy the SESC. In order to state the result, we introduce more
notation. Since we will often write finite words ``backwards'', we adopt the convention that for
$n\neq m\in\mathbb{N}$,
\begin{equation*}
	\bi_m^n\coloneqq
	\begin{cases}
		(i_m,i_{m+1},\ldots,i_{n-1},i_n), &\text{if } m<n; \\
		(i_m,i_{m-1},\ldots,i_{n+1},i_n) &\text{if } m>n,
	\end{cases}
\end{equation*}
where $|\bi|\geq\max\{m,n\}$. This will most often be used in the form $\bi_1^n=(i_1,\ldots,i_n)$ or
$\bi_n^1=(i_n,\ldots,i_1)$. Thus for compositions of maps $f_{\bi_n^1}=f_{i_n}\circ \dots \circ
f_{i_1}$. Sometimes either $m$ or $n$ is $0$. In these cases, the convention is that both $\bi_0^n$
and $\bi_m^0$ are the empty word $\emptyset$. By default, if we simply write
$\bi\in\Sigma_*\cup\Sigma$ then the subscripts are understood to be in increasing order starting
from 1 until $|\bi|$. The concatenation of two finite words is $\bi\bj$, while slightly abusing
notation $\bi^{\infty}\in \Sigma$ is the infinite word obtained by concatenating $\bi\in\Sigma_*$
infinitely many times. 
For any $\bi\in \Sigma\cup\Sigma_*$, we introduce the function 
\begin{equation}\label{eq:H_i(x)}
H_{\bi}(x)=H_{\bi_{1}^{|\bi|}}(x) \coloneqq \sum_{n=1}^{|\bi|}
\frac{f''_{i_n}}{f'_{i_n}}(f_{\bi_{n-1}^1}(x))\cdot f'_{\bi_{n-1}^1}(x),
\end{equation}
where the order of the indices is important. At this point the motivation for $H_{\bi}(x)$ may be unclear,
but will be made apparent in~\cref{sec:DualIFSFull}. 
We can now state our second main result.
\begin{theorem}
  \label{thm:main}
  Let $\Phi\in\mathfrak{S}_N$. If for all distinct $\bi,\bj \in\Sigma\cup\Sigma_*$ with $|\bi|=|\bj|$ we have
\begin{equation}\label{eq:H_iSSC}
    \sup_{x\in[0,1]} |H_{\bi}(x) - H_{\bj}(x)| > 0,
\end{equation}
then $\Phi$ satisfies the SESC.
\end{theorem}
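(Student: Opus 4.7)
The plan is a proof by contradiction: assume the SESC fails, extract a limiting pair of distinct infinite words, and derive an identity of $H$-functions that contradicts \eqref{eq:H_iSSC}. Suppose $\Phi$ fails the SESC. By \cref{def:SESC} there exist integers $n_\ell\to\infty$, positive reals $\eta_{n_\ell}$ with $\log\eta_{n_\ell}/n_\ell\to-\infty$, and distinct $\bi^{(\ell)},\bj^{(\ell)}\in\Sigma_{n_\ell}$ satisfying $\sup_{x\in I}|f_{\bi^{(\ell)}}(x)-f_{\bj^{(\ell)}}(x)|\le\eta_{n_\ell}$. Properties \ref{it:a}--\ref{it:c} make each $f_{\bi^{(\ell)}}$ holomorphic on $\mathcal{B}_{2\epsilon}$ with image in $\mathcal{B}_\epsilon$, so Cauchy's integral formula on a slightly shrunken disc upgrades the $C^0$-smallness to $C^2$-smallness:
\[
  \sup_{x\in I}\bigl|f^{(k)}_{\bi^{(\ell)}}(x)-f^{(k)}_{\bj^{(\ell)}}(x)\bigr|\le C\,\eta_{n_\ell}\qquad(k=0,1,2),
\]
with $C$ independent of $\ell$.

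The bridge from this forward estimate to a statement about $H$ is the identity
\[
  H_{\bi}(x)=\bigl(\log|f'_{\bi_{|\bi|}^{1}}|\bigr)'(x),
\]
which follows from chain-rule differentiation of $f'_{\bi_{|\bi|}^{1}}=\prod_{k=1}^{|\bi|}\bigl(f'_{i_k}\circ f_{\bi_{k-1}^{1}}\bigr)$. Hence $H_\bi$ is the derivative of the log-Jacobian of the \emph{reversed} composition, and the hypothesis \eqref{eq:H_iSSC} naturally pairs with forward estimates only after reversing the words. This is precisely the role of the dual IFS developed later in the paper. Using it, one produces distinct words $\tilde\bi^{(\ell)},\tilde\bj^{(\ell)}$ of length comparable to $n_\ell$ together with an estimate
\[
  \sup_{x\in I}|H_{\tilde\bi^{(\ell)}}(x)-H_{\tilde\bj^{(\ell)}}(x)|\le C'\,n_\ell\,\eta_{n_\ell},
\]
which remains super-exponentially small because $n_\ell$ grows only polynomially.

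The argument concludes by compactness. Since $\mathcal{I}$ is finite, pass to a subsequence along which $\tilde\bi^{(\ell)}\to\tilde\bi^\ast$ and $\tilde\bj^{(\ell)}\to\tilde\bj^\ast$ in the product topology on $\Sigma$. A preliminary reduction, in which one strips off the maximal common prefix of $\tilde\bi^{(\ell)}$ and $\tilde\bj^{(\ell)}$ before extracting (the outer action is absorbed by bounded distortion and the uniform contraction in \ref{it:c}), guarantees that the first symbols of $\tilde\bi^{(\ell)}$ and $\tilde\bj^{(\ell)}$ differ and hence that $\tilde\bi^\ast\neq\tilde\bj^\ast$. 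Taking $\ell\to\infty$ in the previous estimate along each finite length-$n$ prefix, and then $n\to\infty$ using that the tail of the sum defining $H$ decays geometrically by \ref{it:c}, yields $H_{\tilde\bi^\ast}\equiv H_{\tilde\bj^\ast}$ on $I$, contradicting the hypothesis \eqref{eq:H_iSSC}.

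The main obstacle is the middle step: the hypothesis is phrased via reversed compositions $f_{\bi_n^1}$, whereas the SESC concerns the forward compositions $f_\bi$, and directly converting between them is non-trivial because the cocycle governing $H$ is the reverse one. The dual IFS is the technical device designed to bridge the two directions without losing the super-exponential scale; once it is available, the Cauchy-estimate smoothing at the start and the diagonal extraction at the end are essentially routine.
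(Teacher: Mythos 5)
Your overall strategy is the paper's: argue by contradiction from a super-exponential-condensation sequence, pass to the functions $H_{\bi}$ via the identity $H_{\bi}=f''_{\bi_{|\bi|}^1}/f'_{\bi_{|\bi|}^1}$, reverse the words and strip the common part so that the surviving first symbols differ, and then use compactness and analyticity to produce a pair with $H_{\bi^*}\equiv H_{\bj^*}$. However, there is a genuine gap in the middle step. When you strip the common suffix $\bu^{(n_\ell)}$ of $\bi,\bj\in\Sigma_{n_\ell}$, the resulting estimate on the $H$-functions does \emph{not} hold for all $x\in I$, only on the image $f_{\bu^{(n_\ell)}}([0,1])$: the relation $H_{\bi}(x)-H_{\bj}(x)=f'_{\bu^{(n_\ell)}}(x)\bigl(H_{\bi^{(n_\ell)}}(f_{\bu^{(n_\ell)}}(x))-H_{\bj^{(n_\ell)}}(f_{\bu^{(n_\ell)}}(x))\bigr)$ forces the evaluation point to be $f_{\bu^{(n_\ell)}}(x)$. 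So your asserted bound $\sup_{x\in I}|H_{\tilde\bi^{(\ell)}}(x)-H_{\tilde\bj^{(\ell)}}(x)|\le C'n_\ell\eta_{n_\ell}$ is not available. This matters precisely in the case $|\bu^{(n_\ell)}|\to\infty$, where $f_{\bu^{(n_\ell)}}([0,1])$ shrinks to the single point $\pi(\bu^*)$; the limit only gives $H_{\bi^*}(\pi(\bu^*))=H_{\bj^*}(\pi(\bu^*))$, which alone does not contradict \eqref{eq:H_iSSC}. The paper handles this by a case distinction: if $|\bu^{(n_\ell)}|$ stays bounded along a subsequence, the image is a definite subinterval and analyticity (\cref{thm:H_iAnalytic}) upgrades equality there to equality on $I$; if $|\bu^{(n_\ell)}|\to\infty$, one must iterate the derivative-transfer lemma to show that \emph{all} derivatives $H^{(k)}_{\bi^*}(\pi(\bu^*))=H^{(k)}_{\bj^*}(\pi(\bu^*))$ and then invoke analyticity (this is the content of \cref{eq:H_i^(k)Diff} together with \cref{thm:kbound,thm:difcor,thm:difbound}). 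Your proposal omits the dichotomy entirely, and the harder branch is exactly where the bulk of the technical work lies.

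Two secondary remarks. First, your claim that Cauchy's integral formula upgrades $\sup_{x\in I}|f_{\bi}-f_{\bj}|\le\eta$ to $C^2$-closeness on $I$ with only a constant loss is not correct as stated: the smallness hypothesis is on the real interval $[0,1]$, not on a complex disc, so Cauchy's estimate does not apply directly; at best a two-constants/three-circles argument yields a bound of the form $\eta^{\alpha}$ with $\alpha<1$. The paper instead uses the elementary Taylor argument of \cref{thm:analyticity}, losing a square root per derivative, which still preserves super-exponential decay after dividing by $c_{\min}^{n_\ell}$ factors. This imprecision does not break your plan, but it should be stated accurately. Second, note that in the paper's proof the full dual-IFS formalism is not strictly needed for \cref{thm:main}; the essential inputs are the explicit $H$-lemmas of \cref{sec:DualIFSFull}.
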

Apart from its practical use, \cref{thm:main} is also a crucial step in
proving~\cref{thm:ESCOpenDense} and shows that the $\mathcal{C}^2$ topology
is the natural choice for typicality.
This is because \cref{eq:H_iSSC} cannot be satisfied for linear maps, having $f''_i =0$, and the
$\mathcal{C}^2$ is the finest topology for which we can distinguish strictly conformal maps
sufficiently, see also \cref{sec:ConjLinSys}.

We shall see in~\cref{thm:DualSSC} that~\cref{eq:H_iSSC} has an elegant interpretation as an analog
of the SSC on a larger space of IFSs which we further elaborate on in~\cref{sec:DualIFSFull}. 
Theorem~\ref{thm:main} is proved in~\cref{sec:ProofSufficientCond}, while the proof of
\cref{thm:ESCOpenDense} is postponed until \cref{sec:ProofESCOpenDense}.

%%%%%%%%%%%%%%%%%%%%%%%%%%%%%%%%%%%%%%%%%%%%%%%%%%%%%%%%%%%%%%%%%%%%%%%%%%%
\subsection{Discussion}
We give further context to our main results. We first discuss the dimension theoretic implications
of our main results and  give explicit examples of IFSs that satisfy the SESC. We end the section by
discussing a link to conjugation with self-similar IFSs.

\subsubsection{Dimension theoretic consequences}
Given a non-degenerate probability vector $\mathbf{p}$ and a self-conformal IFS $\Phi\in
\mathfrak{S}_N$, we define the entropy of $\mathbf{p}$ by
\begin{equation*}
H(\mathbf{p})\coloneqq -\sum_{i\in\mathcal{I}} p_i\log p_i;
\end{equation*}
the Lyapunov exponent associated to $\mathbf{p}$ and $\Phi$ by
\begin{equation*}
\chi=\chi(\Phi,\mathbf{p}):=-\sum_{i \in \mathcal{I}} p_i \int \log \big|f_{i}^{'}(x)\big|
\mathrm{d} \mu_{\mathbf{p}}(x),
\end{equation*}
where $\mu_{\mathbf{p}}$ is the self-conformal measure defined in \cref{eq:conformaMeasure}.
For $t\geq 0$, we define the pressure function
\begin{equation*}
P(t)=P_{\Phi}(t):=\lim _{n \rightarrow \infty} \frac{1}{n} \log \sum_{\bi \in
\mathcal{I}^n} \Big(\sup_{x\in[0,1]} \big|(f_{\bi_1^n})^{\prime}(x)\big|\Big)^t,
\end{equation*}
which is well defined by sub-additivity. It is convex, strictly decreasing and continuous, moreover,
there exists a unique real $s(\Phi)$ for which $P(s(\Phi))=0$.
Following~\cite[Chapter~14]{BaranySimonSolomyak_Book23}, we call $s(\Phi)$ the \emph{conformality
dimension} associated to $\Phi$. 

For any self-conformal set and self-conformal measure supported on it, the bounds
\begin{equation}\label{eq:DimUpperBound}
\dim_{\mathrm{H}} \Lambda \leq \min\{1,s(\Phi)\}  \;\;\text{ and }\;\; \dim \mu_{\mathbf{p}} \leq
\min\{1, H(\mathbf{p})/\chi\} \text{ for every } \mathbf{p},
\end{equation} 
hold, regardless of possible overlaps between the pieces $f_i(\Lambda)$.
Here, we denote by $\dim_{\mathrm H}$ the Hausdorff dimension of a set and measure, see
\cite{FalconerBook} for definitions and basic properties.

Rapaport's main result of \cite{Rapaport_SelfConfESC25arXiv}
can be stated in the following way.

\begin{theorem}[\cite{Rapaport_SelfConfESC25arXiv}]\label{thm:RapaportMain}
  Let $\Phi\in\mathfrak{S}_N$ be such that its attractor is not a singleton. If
  $\Phi$ satisfies the ESC, then there is equality in~\cref{eq:DimUpperBound}.
\end{theorem}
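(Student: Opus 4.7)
The plan is to prove the contrapositive: assume a dimension drop occurs and derive super-exponential condensation, contradicting the ESC. By a standard variational argument using the equilibrium state for $-s(\Phi)\log|f_i'|$, the set-dimension statement $\dim_{\mathrm H}\Lambda=\min\{1,s(\Phi)\}$ follows from the measure version $\dim_{\mathrm H}\mu_{\mathbf p}=\min\{1,H(\mathbf p)/\chi\}$ for every non-degenerate $\mathbf p$, so I focus on the latter.

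The proof I would pursue is the adaptation of Hochman's multi-scale entropy method from the self-similar to the self-conformal setting. Set $\mu=\mu_{\mathbf p}$ and, for $n\in\bbN$, let $H_n(\mu)=n^{-1}H(\mu,\mathcal D_n)$ where $\mathcal D_n$ is the dyadic partition of $\bbR$ at scale $2^{-n}$. Exact dimensionality of self-conformal measures gives $\lim_n H_n(\mu)=\dim_{\mathrm H}\mu$, while Shannon--McMillan applied on the Bernoulli shift combined with bounded distortion identifies the expected value of $H_n(\mu)$ as $H(\mathbf p)/\chi$. Using the invariance $\mu=\sum_{\bi\in\Sigma_n}p_{\bi}(f_{\bi})_*\mu$ together with $\mathcal C^{1+\alpha}$ bounded distortion, each $f_{\bi}$ is, at scale $\asymp|f_{\bi}'|$, within Hölder error of its affine linearization $A_{\bi}(x)=f_{\bi}'(x_0)\,x+f_{\bi}(x_0)$ for a reference $x_0$. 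After a stopping-time construction that groups finite words by their contraction scale into the dyadic window $2^{-n}$, one can approximate $\mu$ at scale $2^{-n}$ by the convolution of an atomic measure $\nu_n=\sum p_{\bi}\delta_{f_{\bi}(x_0)}$ with a rescaled miniature copy of $\mu$, up to quantitatively controlled nonlinear error.

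The heart of the argument is an \textbf{entropy increase} step. If $\dim_{\mathrm H}\mu<\min\{1,H(\mathbf p)/\chi\}$, then $H_n(\mu)$ stays a uniform $\delta>0$ below its expected value on a set of scales of positive density. Applying Hochman's inverse theorem for entropy of convolutions to the affine model, this failure of entropy growth forces the atomic measures $\nu_n$ to concentrate on near-arithmetic progressions at many scales. Translated back to the IFS via the linearization dictionary, this yields a subsequence $n_\ell$ and distinct $\bi,\bj\in\Sigma_{n_\ell}$ with $\sup_{x\in[0,1]}|f_{\bi}(x)-f_{\bj}(x)|\leq e^{-\omega(n_\ell)}$ for some $\omega(n)/n\to\infty$, i.e., super-exponential condensation, contradicting ESC.

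The main obstacle is that Hochman's inverse theorem is engineered for truly affine systems, where convolutions compose linearly across scales, whereas here the linearizations $A_{\bi}$ only approximate $f_{\bi}$ up to a nonlinear error that could in principle compound over many levels. Resolving this requires a careful telescoping argument: one freezes the linearization at each dyadic level, applies the inverse theorem to the frozen affine system, and controls the cumulative nonlinear error via the $\mathcal C^{1+\alpha}$ modulus of $f_i''/f_i'$, showing that the error remains subexponential in $n$ and therefore does not obstruct the extraction of the exponentially close pair $\bi,\bj$. A secondary difficulty is matching the $\bi$-dependent geometric scales $|f_{\bi}'|$ to the uniform dyadic scales $2^{-n}$; this is handled by the stopping-time construction and relies on ergodicity of the Bernoulli measure on $\Sigma$ to ensure that a positive-density family of words has $|f_{\bi}'|\asymp e^{-n\chi}$.
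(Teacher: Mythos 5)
The statement you are trying to prove is not proved in this paper at all: it is labelled \texttt{[\cite{Rapaport\_SelfConfESC25arXiv}]} and quoted verbatim as Rapaport's main result. The present paper simply cites it (immediately before, the authors write ``Rapaport's main result of \cite{Rapaport\_SelfConfESC25arXiv} can be stated in the following way''), and then uses it as a black box to deduce \cref{cor:dim}. So there is no in-paper proof to compare your attempt against.

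That said, your sketch correctly identifies the Hochman--Rapaport strategy: reduce the set statement to the measure statement via equilibrium states, control the self-conformal measure at dyadic scales by linearizations $A_{\bi}$, apply an inverse theorem for entropy of convolutions, and convert failure of entropy growth into a super-exponentially close pair $(\bi,\bj)$. But a sketch at this level does not constitute a proof: it defers the two genuinely hard points to a sentence each. First, Hochman's inverse theorem is for convolutions of measures on $\bbR$, not for the multi-scale structure of a self-conformal measure; Rapaport has to reformulate the entropy-increase step so that the ``nonlinear error'' accrued by replacing $f_{\bi}$ with $A_{\bi}$ does not destroy the entropy count, and this requires working in a larger space (effectively the $2$-jet/affine group) rather than a simple telescoping error bound as you suggest. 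Second, extracting the exponentially close pair from ``near-arithmetic concentration of $\nu_n$'' in the nonlinear setting is not automatic; the ESC in this paper is phrased with $\sup_{x\in[0,1]}|f_{\bi}(x)-f_{\bj}(x)|$, not with $|f_{\bi}(x_0)-f_{\bj}(x_0)|$ at a single reference point, and bridging this requires the distortion estimates that Rapaport develops. Your ``main obstacle'' paragraph flags the right issue but the proposed fix (freeze the linearization at each level and show subexponential error) is not enough on its own; one needs the inverse theorem to apply simultaneously to the translation and derivative parts of $f_{\bi}$, which is precisely the technical contribution of Rapaport's paper. In short: right roadmap, but the two steps you label as ``careful telescoping'' and ``translated back via the linearization dictionary'' are where the actual theorem lives.
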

We note that the assumption that the attractor is not a singleton is equivalent to the existence of
$f,g\in\Phi$ with distinct fixed points.
Combining our \cref{thm:ESCOpenDense} with~\cref{thm:RapaportMain} immediately gives the following corollary.

\begin{corollary}\label{cor:dim}
  The set $\{\Phi \colon \Phi \text{ satisfies equality in \cref{eq:DimUpperBound}}\}\subseteq
  \mathfrak{S}_N$ contains an open and dense subset in the $\mathcal{C}^2$ topology.
\end{corollary}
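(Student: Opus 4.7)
The plan is to combine \cref{thm:ESCOpenDense} with Rapaport's \cref{thm:RapaportMain} in a direct way, the only subtlety being the exclusion of singleton attractors from the hypothesis of \cref{thm:RapaportMain}. First I would invoke \cref{thm:ESCOpenDense} to produce an open and dense set $\mathcal{U}\subseteq\mathfrak{S}_N$ of IFSs satisfying SESC, and recall that SESC implies ESC by the chain of implications noted after \cref{def:SESC}.

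Next I would verify that the set $\mathcal{V}\subseteq\mathfrak{S}_N$ of IFSs whose attractor is not a singleton is itself open and dense in the $\mathcal{C}^2$ topology. The attractor is a singleton precisely when all maps $f_i$ share a common fixed point, so $\mathcal{V}$ contains every IFS for which some pair $f_i,f_j$ has distinct fixed points. Openness follows because each $f_i\in\mathcal{S}^\omega_\epsilon(I)$ is a strict contraction on $I$, so its unique fixed point depends continuously on $f_i$ in the $\mathcal{C}^2$ topology (implicit function theorem applied to $x-f_i(x)=0$, using $|f_i'|<1$), and the condition that two such fixed points differ is open. Density is clear: given any $\Phi$, perturb one of the maps slightly to shift its fixed point off the common value.

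The intersection $\mathcal{U}\cap\mathcal{V}$ is then open and dense, and for every $\Phi\in\mathcal{U}\cap\mathcal{V}$ the hypotheses of \cref{thm:RapaportMain} are met, yielding equality in \cref{eq:DimUpperBound}. Since the set in the statement of \cref{cor:dim} contains $\mathcal{U}\cap\mathcal{V}$, the conclusion follows. There is no genuine obstacle here beyond bookkeeping the singleton case; the real content of the corollary is packaged inside \cref{thm:ESCOpenDense}.
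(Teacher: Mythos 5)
Your proposal is correct and follows essentially the same route the paper has in mind: invoke \cref{thm:ESCOpenDense} to get an open dense set with SESC (hence ESC), then apply \cref{thm:RapaportMain}. The only point worth noting is that you make explicit a step the paper compresses into ``immediately'': you verify that the non-singleton-attractor locus $\mathcal{V}$ is itself open and dense (via continuous dependence of the fixed point on the map and a small perturbation for density), and then intersect with $\mathcal{U}$. This is a genuine necessary step rather than pure bookkeeping --- SESC does not preclude a singleton attractor (e.g.\ two maps with a common fixed point can still satisfy SESC), and in that case $\dim_H\Lambda=0<s(\Phi)$, so equality in \cref{eq:DimUpperBound} would fail --- and the paper flags it only implicitly through its remark after \cref{thm:RapaportMain} that non-singleton is equivalent to two maps having distinct fixed points. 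Your argument fills that gap cleanly.
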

This can be interpreted as follows: a typical analytic self-conformal IFS, in a strong topological
sense, has no dimension drop.

Another potential direction to consider is that of $L^q$ dimensions, a more fine-grained notion that
captures the measure's regularity.  Building on the methods of~\cite{Hochman_SelfSimESC_Annals},
Shmerkin~\cite{Shmerkin_LqSelfSim_Annals} showed that a natural analogue of \cref{thm:RapaportMain}
exists for the $L^q$ dimension of self-similar measures. 
Our typicality result does not extend to the $L^q$ dimension of self-conformal measures in general.
Below, we give an explicit example of an IFS that satisfies the SESC, but the natural measure
exhibits a dimension drop of the $L^q$ dimension for large $q$.
In the setting of M\"obius IFSs satisfying the SESC,
Usuki~\cite{Usuki_LqMobius25arXiv} proved a dichotomy for the $L^q$ spectrum: either
the $L^q$ dimension takes the expected value for all $q>1$, or the spectrum eventually becomes
linear. A key observation is that the latter case occurs whenever two maps in the IFS share a
common fixed point in the attractor, which is precisely the mechanism behind the dimension drop in
our example below.
A natural open question is whether Usuki's result extends to analytic self-conformal
systems.

%%%%%%%%%%%%%%%%%%%%%%%%%%%%%%%%%%%%%%%%%%%%%%%%%%%%%%%%%%%%%%%%%%%%%%%%%%%
\subsubsection{Concrete examples}\label{sec:examples}
Similar to the self-similar setting, it would be desirable to have conditions under which one
could determine whether a concrete IFS or a parametrised family of IFSs satisfies the (S)ESC or not
also in the analytic setting. Rapaport~\cite[Corollary 1.4]{Rapaport_SelfConfESC25arXiv}, based on a
result of Solomyak and Takahashi~\cite{SolomyakTakahashi_IMRN21}, showed that under a mild
non-degeneracy condition, given a one-parameter family of analytic IFSs, the set of parameters for
which the ESC fails has zero Hausdorff dimension. Concrete families of IFSs can be constructed to
which this result applies, however, it still does not explicitly say whether an IFS is in the
exceptional set of parameters or not. 
In the following, we give an easy to verify sufficient condition and demonstrate on
an explicit analytic IFS that it satisfies the SESC. 

Given an IFS $\Phi=(f_i)_{i\in\mathcal{I}}\in\mathfrak{S}_N$ let
\begin{equation}\label{eq:maxmincontract}
  c_{\min} := \inf_{\substack{x\in\overline{\mathcal{B}_{\epsilon}} \\ i\in\mathcal{I}}}
  |f_i'(x)| \quad\text{and}\quad
  c_{\max}:=\sup_{\substack{x\in\overline{\mathcal{B}_{\epsilon}} \\ i\in\mathcal{I}}}
  |f_i'(x)|.
\end{equation}
We note that by \cref{it:c} and compactness, $0<c_{\min} \leq c_{\max} <1$.

\begin{proposition}\label{prop:example}
Let $\Phi=(f_i)_{i\in\mathcal{I}}\in\mathfrak{S}_N$ be an IFS. Suppose that there exists $\alpha>0$
such that for each $i\neq j\in\mathcal{I}$ there exists $x_{i,j}\in[0,1]$ with
\begin{equation*}
\left| \frac{f_i''(x_{i,j})}{f_i'(x_{i,j})} - \frac{f_j''(x_{i,j})}{f_j'(x_{i,j})} \right| \geq \alpha . 
\end{equation*}
Let
\begin{equation*}
  \beta:=\sup_{\substack{x\in[0,1] \\ i\in\mathcal{I}}} \left| \frac{f_i''(x)}{f_i'(x)} \right|.
\end{equation*}
Then $\beta>0$ and if $\alpha > 2\beta\cdot c_{\max}/(1-c_{\max})$, we have
$
\sup_{x\in[0,1]} |H_{\bi}(x) - H_{\bj}(x)| > 0 
$
for all distinct $\bi,\bj \in\Sigma\cup\Sigma_*$ with $|\bi|=|\bj|$. In particular, $\Phi$ satisfies
the SESC by~\cref{thm:main}.
\end{proposition}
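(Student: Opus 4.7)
The plan is to derive a multiplicative recursion for $H_\bi$, use it to reduce to the case where $\bi$ and $\bj$ already differ at the first coordinate, and then estimate directly at the prescribed point $x_{i_1,j_1}$. First note that $\beta>0$: if $\beta=0$ then $f''_i/f'_i \equiv 0$ for every $i$, forcing $|f''_i/f'_i(x_{i,j}) - f''_j/f'_j(x_{i,j})| = 0$ and contradicting $\alpha>0$.

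For any $1 \le k \le |\bi|$, let $\bi^{(k)}=(i_k,i_{k+1},\ldots)$ denote the shifted word. A direct re-indexing of the sum in \cref{eq:H_i(x)} via the chain rule yields the recursion
\[
H_\bi(x) = H_{\bi_1^{k-1}}(x) + f'_{\bi_{k-1}^1}(x) \cdot H_{\bi^{(k)}}\bigl(f_{\bi_{k-1}^1}(x)\bigr).
\]
Suppose $\bi,\bj$ are distinct with $|\bi|=|\bj|$, and let $k$ be the smallest index where they differ. Then $\bi_1^{k-1}=\bj_1^{k-1}$ and $\bi_{k-1}^1=\bj_{k-1}^1$, so the first summands cancel and
\[
H_\bi(x)-H_\bj(x) = f'_{\bi_{k-1}^1}(x)\bigl[H_{\bi^{(k)}}(y)-H_{\bj^{(k)}}(y)\bigr], \qquad y=f_{\bi_{k-1}^1}(x).
\]
Since $|f'_{\bi_{k-1}^1}|>0$ on $[0,1]$ and every $H_\bi$ extends to a complex-analytic function on an open complex neighbourhood of $[0,1]$ (the defining series converges absolutely because the terms are dominated by the geometric series $\beta\,c_{\max}^{n-1}$), the identity theorem implies that $\sup_{x\in[0,1]}|H_\bi-H_\bj|>0$ iff $H_{\bi^{(k)}}-H_{\bj^{(k)}}$ is not identically zero on the non-degenerate subinterval $f_{\bi_{k-1}^1}([0,1])\subseteq [0,1]$, which is in turn equivalent to $\sup_{y\in[0,1]}|H_{\bi^{(k)}}-H_{\bj^{(k)}}|>0$. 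So it suffices to treat the case $k=1$.

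Assume now $i_1 \ne j_1$ and evaluate at $x=x_{i_1,j_1}$. The $n=1$ terms of $H_\bi$ and $H_\bj$ combine to $f''_{i_1}/f'_{i_1}(x_{i_1,j_1})-f''_{j_1}/f'_{j_1}(x_{i_1,j_1})$, whose absolute value is at least $\alpha$ by hypothesis. Each remaining term is controlled by $|\mathrm{term}_n| \le \beta\,c_{\max}^{n-1}$, so the $n\ge 2$ contribution to $|H_\bi - H_\bj|$ satisfies
\[
2\sum_{n\ge 2}\beta\,c_{\max}^{n-1} = \frac{2\beta\,c_{\max}}{1-c_{\max}} < \alpha
\]
by the assumption on $\alpha$. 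The triangle inequality then gives $|H_\bi(x_{i_1,j_1})-H_\bj(x_{i_1,j_1})|>0$, and \cref{thm:main} yields the SESC.

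The main obstacle is the analytic-continuation step: the point $x_{i_k,j_k}$ supplied by the hypothesis need not lie in the typically tiny image $f_{\bi_{k-1}^1}([0,1])$, so one cannot directly evaluate $|H_\bi-H_\bj|$ at a convenient point in the original variable. Factoring out $f'_{\bi_{k-1}^1}(x)$ and appealing to the identity theorem is what bypasses this difficulty, after which the prescribed point is directly accessible in $[0,1]$ for the shifted pair $(\bi^{(k)},\bj^{(k)})$.
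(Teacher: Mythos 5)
Your proof is correct and takes essentially the same approach as the paper's: both factor $H_{\bi}-H_{\bj}$ through the recursion that strips off the common prefix, both invoke analyticity (the identity theorem) to transfer positivity from the shifted pair evaluated on $[0,1]$ to the original pair evaluated on the small image interval $f_{\bi_{k-1}^1}([0,1])$, and both close the argument with the same triangle-inequality estimate at $x_{i_1,j_1}$ giving $\alpha - 2\beta c_{\max}/(1-c_{\max}) > 0$. The only cosmetic difference is the order: the paper first establishes positivity on every non-degenerate subinterval for the $i_1\neq j_1$ case (their~\cref{eq:posallint}) and then pulls back, whereas you pull back first and then treat $k=1$; you also spell out why $\beta>0$, which the paper asserts without comment.
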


\begin{proof}
First, let us show that for every $\bi,\bj\in\Sigma\cup\Sigma_*$ with $i_1\neq j_1$ we have for
every non-degenerate closed interval $J\subseteq[0,1]$
\begin{equation}\label{eq:posallint}
	\sup_{x\in J} |H_{\bi}(x) - H_{\bj}(x)| > 0.
\end{equation} 
Since $i_{1}\neq j_{1}$, for the particular choice of $x_{i_{1},j_{1}}$, we can bound 
\begin{align*}
	|H_{\bi}(x_{i_{1},j_{1}}) - H_{\bj}(x_{i_{1},j_{1}})| 
	&\geq \left| \frac{f_{i_{1}}''(x_{i_{1},j_{1}})}{f_{i_{1}}'(x_{i_{1},j_{1}})} -
	\frac{f_{j_{1}}''(x_{i_{1},j_{1}})}{f_{j_{1}}'(x_{i_{1},j_{1}})} \right| - 2
	\sum_{n=2}^{\infty}\; \sup_{x\in[0,1]} \big| f_{\bi_{n-1}^{1}}'(x) \big| \cdot \sup_{x\in[0,1]}
	\left| \frac{f_{i_n}''(x)}{f_{i_n}'(x)} \right| \\
	&\geq \alpha -  2\frac{c_{\max}}{1-c_{\max}}\cdot \beta >0
\end{align*}
by our assumption. Hence, $H_{\bi}\not\equiv H_{\bj}$ and using the analyticity of the maps (which
will be verified later in \cref{thm:H_iAnalytic}) \cref{eq:posallint} follows.
	
Now, let $\bi,\bj \in\Sigma\cup\Sigma_*$ be distinct words such that $|\bi|=|\bj|$ and $|\bi\wedge \bj|=k$. It
  follows from~\cref{eq:H_i(x)} that
  \begin{equation*}
    H_{\bi}(x)-H_{\bj}(x) = f_{\bi_k^1}'(x)\cdot \big( H_{\bi_{k+1}^{|\bi|}}(f_{\bi_k^1}(x)) -
    H_{\bj_{k+1}^{|\bj|}}(f_{\bi_k^1}(x)) \big)
\end{equation*}
for every $x\in[0,1]$. As a result,
\begin{equation*}
|H_{\bi}(x)-H_{\bj}(x)| \geq c_{\min}^k\cdot \big| H_{\bi_{k+1}^{|\bi|}}(f_{\bi_k^1}(x)) -
H_{\bj_{k+1}^{|\bj|}}(f_{\bi_k^1}(x)) \big|.
\end{equation*}
Taking supremum over $x\in[0,1]$ in both sides, the claim of the proposition follows by \cref{eq:posallint}.
\end{proof}

We demonstrate \cref{prop:example} on an example. Consider the IFS $\Phi=(f_1,f_2,f_3)$ with
\begin{equation*}%\label{eq:exampleIFS}
f_1(x)\coloneqq \frac{x}{8}, \quad f_2(x)\coloneqq \frac{x}{8} + \frac{x^2}{32}, \text{ and } \quad
f_3(x)\coloneqq \frac{x}{16} + \frac{x^2}{32} + \frac{29}{32}. 
\end{equation*}
Cylinder sets certainly overlap heavily since $f_1$ and $f_2$ have common fixed point at $x=0$,
while $f_3$ has fixed point at $x=1$. Since the maps are polynomials, for $\epsilon>0$
sufficiently small we have $c_{\max}<1/5$. Moreover,
\begin{equation*}
\sup_{\substack{x\in[0,1] \\ i\in\mathcal{I}}} \left| \frac{f_i''(x)}{f_i'(x)} \right| \leq
\sup_{x\in[0,1]} \max\Big\{\frac{1}{1+x},\, \frac{1}{2+x} \Big\} = 1 \quad\text{ and }\quad
\min_{i\neq j\in\mathcal{I}}\left| \frac{f_i''(0)}{f_i'(0)} - \frac{f_j''(0)}{f_j'(0)} \right| \geq
\frac{1}{2} .
\end{equation*}
Since $c_{\max}<1/5$ implies $2\beta\cdot c_{\max}/(1-c_{\max})<1/2=\alpha$, it follows from~\cref{prop:example} that $\Phi$ satisfies the
SESC, and by \cref{cor:dim}, we obtain $\dim_{\rm H}\Lambda=s(\Phi)$, where $s(\Phi)$ is the
conformality dimension of $\Phi$, and $\dim \mu_{\mathbf{p}} =  H(\mathbf{p})/\chi$ for
every $\mathbf{p}$, defined in the previous section. 

Let us note that Solomyak~\cite{Solomyak24} has already given an example of a non-linear conformal
IFS of linear fractional transformations with such a common fixed point structure.
However, he studied the dimension of the attractor via sufficiently large subsystems, and it was not
verified that the IFS itself satisfies the ESC.

We conclude this section by remarking that the $L^q$ dimension of the natural measure in the example
above drops for large $q$. In particular, one can show that the local dimension at $0$ of the
natural measure is strictly smaller than the conformality dimension $s(\Phi)$. However, if the $L^q$
dimension did not drop for every $q>0$ then by \cite[Lemma~1.7]{Shmerkin_LqSelfSim_Annals} the local
dimension at every point would be at least $s(\Phi)$, which is a contradiction. We leave the details
for the interested reader.

%%%%%%%%%%%%%%%%%%%%%%%%%%%%%%%%%%%%%%%%%%%%%%%%%

\subsubsection{Conjugation to self-similar systems}\label{sec:ConjLinSys}
As we remarked above, assumption \eqref{eq:H_iSSC} of \cref{thm:main} cannot be satisfied for linear
systems since $H_{\bi}(x)=0$ for all $\bi\in \Sigma$ and $x \in I$.  Crucially, the functions
$H_{\bi}(x)$ have a stronger relation to the linearity and linearisability of an analytic IFS,
although the definition may not at first glance reveal this.  The functions $H_{\bi}$ can be used to
determine whether an analytic IFS can be transformed into a
self-similar one through a change of coordinates.
The problem of being conjugated to a linear IFS played a significant role in the study of Fourier
decay of self-conformal measures, see
\cite{AlgomHertzWang23,AlgomEtal_LogFourierDecaySelfConf_JLMS,BakerBanaji_PolyFourierDecay} and \cite[Corollary 1.2 part
3.]{AlgomEtal_PointwiseNormalityFDecaySelfSonf_AdvMath21}.

\begin{definition}\label{def:conjugation}
  We say that the IFS $\Phi = (f_i)_{i\in\mathcal{I}} \in\mathfrak{S}_N$ is 
\emph{conjugated} to another IFS $\Psi$ if there exists an analytic, invertible
  $g:[0,1]\to\bbR$ such that $\Psi=(g\circ f_i\circ g^{-1})_{i\in\mathcal{I}}$. 
\end{definition}
In particular, $\Phi$
  is conjugated to a self-similar IFS if there exist $\lambda_i
\in(-1,1)\setminus\{0\}$, $t_i\in\bbR$ with $i\in\mathcal{I}$ such that
\[
f_{j}(x) = g^{-1}(\lambda_j g(x) + t_j)
\]
for all $j\in\mathcal{I}$.
\begin{definition}
  We say that the IFS $\Phi = (f_i)_{i\in\mathcal{I}} \in\mathfrak{S}_N$ is 
  \emph{sub-conjugated} to a self-similar IFS if there exist distinct words $\bi,\bj\in\Sigma_*$ of
  the same length such that $(f_{\bi},f_{\bj})$ is conjugated to a self-similar IFS.
\end{definition}

\begin{remark}
  Note that we assume the conjugating function $g$ to be analytic in~\cref{def:conjugation}. One could
  impose the weaker condition that $f\in\mathcal{C}^r([0,1])$ for all $f\in\Phi$ for some $2\leq r\leq
  \infty$ instead. Under this assumption, the authors of~\cite{AlgomEtal_NonLinHyperbolicIFS} show the
  existence of a $\mathcal{C}^r$-smooth IFS which is not $\mathcal{C}^r$-conjugate to self-similar
  even though $f'\equiv c_{\Phi}$ on $\Lambda$ and $f''\equiv 0$ for every $f\in\Phi$. This behaviour
  is not possible in the analytic setting since the assumption that $f''(x)=0$ on $\Lambda$ together
  with analyticity already forces $f$ to be an affine function. 
\end{remark}

We give a characterisation of when an analytic IFS is (sub-)conjugated to a self-similar IFS using
the function $H_{\bi}$ introduced in~\cref{eq:H_i(x)}. We point out that Algom, Rodriguez
Hertz, and Wang \cite[Claim 6.1]{AlgomHertzWang23} already gave a dichotomy stating that 
either (a) there is a analytic conjugation to a self-similar IFS or (b) that any such conjugation
cannot even be $\mathcal{C}^2$. Our result can be considered
complementary, as we give a checkable condition when we are conjugated to a self-similar system.

\begin{theorem}\label{thm:SubConjugation}
  Any $\Phi\in\mathfrak{S}_N$ is conjugated to another analytic IFS which has at least one similarity map. 
  Moreover, assuming that the attractor of $\Phi$ is not a singleton then 
  \begin{enumerate}[label=(\alph*)]
    \item\label{it:conj1} $\Phi$ is conjugated to a self-similar IFS if and only if for every
      $\bi,\bj\in\Sigma$,
      \[
	H_{\bi}(x) \equiv H_{\bj}(x)
      \]
      for all $x\in[0,1]$;
    \item\label{it:conj2} $\Phi$ is sub-conjugated to a self-similar IFS if and only if there exist
      distinct $\bi,\bj\in\Sigma_*$
      of the same length such that 
      \[
	H_{(\bi)^\infty}(x) \equiv H_{(\bj)^\infty}(x)
      \]
      for all $x\in[0,1]$.
  \end{enumerate}
\end{theorem}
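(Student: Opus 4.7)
The plan rests on the following basic identity, which I would establish first by induction on $n$ using the chain rule:
\[
H_{\bi_1^n}(x) = \frac{(f_{\bi_n^1})''(x)}{(f_{\bi_n^1})'(x)} = \bigl(\log|(f_{\bi_n^1})'|\bigr)'(x),
\]
where $f_{\bi_n^1}=f_{i_n}\circ\cdots\circ f_{i_1}$. Since the $f_i$ are contractions, $(f_{\bi_n^1})'\to 0$ uniformly, and this gives an absolutely convergent series extending $H$ to infinite words. The identity combined with the chain rule also yields the recursion $H_\bi(x) = f''_{i_1}(x)/f'_{i_1}(x) + H_{\sigma\bi}(f_{i_1}(x))\cdot f'_{i_1}(x)$ which I use repeatedly. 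For the first statement, I would take any $f\in\Phi$ and apply Koenigs' linearisation theorem at its fixed point $p\in I$ with multiplier $\lambda=f'(p)\in(-1,1)\setminus\{0\}$. Analyticity and invertibility of the Koenigs function $g$ extend from a neighbourhood of $p$ to all of $[0,1]$ via $g(x)=\lambda^{-n}g(f^n(x))$; conjugating by $g$ makes $f$ into the similarity $y\mapsto\lambda y$.

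For part (a), the forward direction is a direct telescoping computation. If $\psi_i=g\circ f_i\circ g^{-1}$ is affine, then differentiating $f_i = g^{-1}\circ\psi_i\circ g$ and setting $\phi(x)\coloneqq g''(x)/g'(x)$ gives the pointwise functional equation
\[
\frac{f_i''(x)}{f_i'(x)} = \phi(x) - \phi(f_i(x))\cdot f_i'(x),
\]
which causes the sum defining $H_{\bi_1^n}$ to telescope into $\phi(x)-\phi(f_{\bi_n^1}(x))\cdot(f_{\bi_n^1})'(x)$. Since $(f_{\bi_n^1})'\to 0$ and $\phi$ is bounded, $H_\bi(x)\equiv\phi(x)$ for every infinite $\bi$. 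Conversely, if $\phi\coloneqq H_\bi$ is independent of $\bi\in\Sigma$, then the recursion for $H_\bi$ forces the same functional equation to hold for every $i\in\mathcal I$. I would then set $g'(x)\coloneqq\exp\!\bigl(\int_0^x\phi\bigr)$ and $g(x)\coloneqq\int_0^x g'$; analyticity of $\phi$ (as a uniform limit on the complex neighbourhood of the analytic partial sums $H_{\bi_1^n}$) gives $g\in\mathcal{S}^\omega_\epsilon$-type regularity, and $g'>0$ gives invertibility on $[0,1]$. Reversing the telescoping computation, one checks $\psi_i''\equiv 0$, so each $\psi_i$ is affine, and the hypothesis that $\Lambda$ is not a singleton implies the contraction ratios $\lambda_i$ are not all zero, so $\Psi$ is a genuine self-similar IFS.

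For part (b), I combine part (a) with Koenigs at the level of a single composed map. Using the finite-word identity, one sees that $H_{(\bi)^N}(x)=\bigl(\log|(f_{\cev\bi}^N)'|\bigr)'(x)$, where $\cev\bi$ denotes $\bi$ reversed. Applying Koenigs to the contraction $f_{\cev\bi}$ at its fixed point with linearising function $g_\bi$, the formula $f_{\cev\bi}^N = g_\bi^{-1}\circ(\lambda_\bi^N\cdot)\circ g_\bi$ gives, upon taking $N\to\infty$,
\[
H_{(\bi)^\infty}(x) = \frac{g_\bi''(x)}{g_\bi'(x)}.
\]
Hence $H_{(\bi)^\infty}\equiv H_{(\bj)^\infty}$ is equivalent to $g_\bi$ and $g_\bj$ differing by an affine map, which in turn is equivalent to a single analytic $g$ simultaneously conjugating both $f_{\cev\bi}$ and $f_{\cev\bj}$ to affine maps, i.e., to the two-map IFS $(f_{\cev\bi},f_{\cev\bj})$ being conjugated to self-similar. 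Taking this pair witnesses sub-conjugation, and the converse is immediate by reversing roles of $\bi,\cev\bi$.

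The main obstacle I anticipate is not the algebraic telescoping, which is essentially forced by the logarithmic-derivative interpretation of $H_\bi$, but rather the analytic regularity of the constructed conjugating function $g$ in the backward direction of part (a): one must verify that $\phi$ is analytic on the complex neighbourhood $\mathcal{B}_\epsilon$ (not merely on $[0,1]$) so that $g$ lies in a suitable class, that the non-singleton hypothesis on $\Lambda$ genuinely rules out degenerate situations where every $f_i$ shares the same fixed point and $g$ would collapse, and that the image IFS $\Psi$ satisfies the required contractivity. A parallel subtlety in part (b) is ensuring that the Koenigs conjugators $g_\bi,g_\bj$, originally defined only near their respective fixed points, have a common analytic extension to $[0,1]$ in which the affine relation $g_\bi = a\,g_\bj+b$ holds globally.
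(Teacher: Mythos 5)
Your proposal is correct and, for the forward direction of part (a), takes a genuinely different route from the paper. The paper derives the functional equation $g''(p_{\bi})/g'(p_{\bi}) = H_{\bi_{|\bi|}^1}(p_{\bi})/(1-f'_{\bi}(p_{\bi}))$ at the fixed point of each finite composition, then constructs words $\bk_n = \bi_1^n\bj_n^1$ whose fixed points $p_{\bk_n}\to\pi(\bi)$, and passes to the limit to get $g''/g'(\pi(\bi)) = H_{\bj}(\pi(\bi))$. Since this only gives agreement of the $H_{\bj}$ on the attractor $\Lambda$, the paper then needs analyticity and the non-singleton hypothesis (ensuring $\Lambda$ has an accumulation point) to upgrade to $H_{\bj}\equiv H_{\bk}$ on $[0,1]$. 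Your telescoping argument, based on the pointwise identity $f_i''/f_i' = \phi - (\phi\circ f_i)\cdot f_i'$ with $\phi = g''/g'$, gives $H_{\bi}\equiv\phi$ on all of $[0,1]$ directly, with no detour through fixed points or the attractor. This is more elementary, and as you can check it does not actually require the non-singleton hypothesis for this direction; the worry you express that this hypothesis is needed to ``rule out $g$ collapsing'' is unfounded in your approach (and $\lambda_i = f_i'(p_i)\neq 0$ automatically by property~\ref{it:c}, so the contraction ratios are never zero regardless). The backward direction of (a) in your proposal (reversing the telescope via $g' = \exp\int\phi$) is effectively the content of the paper's Lemma~\ref{lem:conj0}, and your treatment of (b) via the identity $H_{(\bi)^\infty} = \hat g_{\bi}''/\hat g_{\bi}'$ for the Koenigs linearisation $\hat g_{\bi}$ of $f_{\cev\bi}$, plus the observation that $\bi\mapsto\cev\bi$ bijects distinct equal-length pairs, fills in more detail than the paper's one-line ``(b) clearly follows by (a).'' The analytic-extension subtlety you flag is handled in the paper by the explicit formula $\hat g(x) = \lim_n (f^{\circ n}(x)-p)/(f'(p))^n$, which converges uniformly on $\mathcal B_\epsilon$ (Lemma~\ref{lem:conj0} and Lemma~\ref{thm:H_iAnalytic}), so a common analytic extension is not an obstacle.
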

\cref{thm:SubConjugation} is proved in~\cref{sec:ProofConjugation}.

A strongly related condition is that of uniform non-integrability (UNI) introduced by
Dolgopyat and Chernov, which was used in the IFS context by Baker and Sahlsten to determine Fourier
decay of measures, see \cite{BakerSahlsten23} and references therein.
In our terminology, the UNI condition is equivalent to the existence of 
$\bi,\bj\in\Sigma$ such that 
$\inf_{x\in I} |H_{\bi}(x) -H_{\bj}(x)| >0$, which is clearly stronger than the negation of
\cref{it:conj1} in \cref{thm:SubConjugation}.

\begin{remark}
  Let us note that in \cref{it:conj1} one only needs to check that $H_{i^\infty}(x)\equiv
  H_{j^\infty}(x)$ for all $i,j \in\mathcal{I}$ and $x\in I$.
  This may not be clear at this point, but will be immediate after the introduction of the dual IFS
  and \cref{thm:DualConj}.
\end{remark}

We return to the discussion of the dimension drop conjecture. Recall
that the ESC implies that the IFS has no exact overlaps. For some time it was an important open
problem whether there exists a self-similar IFS which has no exact overlaps but has super
exponential condensation. Independently of each other, using different methods,
Baker~\cite{Baker_SuperExpCloseIFS_AdvMath} and
B\'ar\'any--K\"{a}enm\"{a}ki~\cite{BaranyKaenmaki_SuperExpCLoseIFS} showed that such examples do
exist. The idea of Baker was further developed in~\cite{Baker_SuperExpCloseIFS2_ProcAMS}
and~\cite{Chen_SuperExpNoExactOVerlapExample}. It follows from the work of
Rapaport~\cite{Rapaport_ExactOverlapsAlgebraicContr} that the examples
in~\cite{Baker_SuperExpCloseIFS_AdvMath, Baker_SuperExpCloseIFS2_ProcAMS,
Chen_SuperExpNoExactOVerlapExample} further support the dimension drop conjecture. 
It is
natural to ask whether analytic IFSs exist which have super-exponential condensation but no exact
overlaps. We conjecture that this only occurs if the IFS is sub-conjugated to a self-similar IFS.

\begin{conjecture}
Any analytic IFS which has super-exponential condensation but no exact overlaps must be
sub-conjugated to a self-similar IFS.
\end{conjecture}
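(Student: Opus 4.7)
The strategy is to reduce the conjecture to~\cref{thm:SubConjugation}\ref{it:conj2} by producing, from super-exponential condensation combined with the absence of exact overlaps, a pair of distinct finite words $\bi,\bj$ of equal length with $H_{(\bi)^\infty} \equiv H_{(\bj)^\infty}$ on $[0,1]$. Throughout, $\Phi \in \mathfrak{S}_N$ is assumed to satisfy super-exponential condensation with no exact overlaps.

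First, I would seek a quantitative version of the implication in~\cref{thm:main}. Its proof (in~\cref{sec:ProofSufficientCond}) presumably controls $\sup_{x}|f_{\bi}(x) - f_{\bj}(x)|$ from below in terms of the deviations $\sup_x |H_{\bi_1^k}(x) - H_{\bj_1^k}(x)|$ along initial subwords, modulated by the derivatives $f'_{\bi_1^k}$. Running this quantitatively, super-exponential condensation should force sequences $\bi^{(k)},\bj^{(k)} \in \Sigma_{n_k}$ with $n_k\to\infty$ such that $\sup_x|H_{\bi^{(k)}}(x) - H_{\bj^{(k)}}(x)|$ decays faster than any exponential in $n_k$. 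A Bolzano--Weierstrass extraction on the compact space $\Sigma\times\Sigma$, coupled with the analyticity of the $H_{\bi}$ and their uniform bounds on $[0,1]$, should then produce infinite words $\bi^*,\bj^* \in \Sigma$ with $H_{\bi^*}\equiv H_{\bj^*}$. The no-exact-overlap hypothesis, together with an application of~\cref{thm:SubConjugation}\ref{it:conj1}, ensures $\bi^* \neq \bj^*$ in a strong sense (otherwise every such pair would collapse and $\Phi$ would already be conjugated to a linear system, handled directly).

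The second and harder step is to upgrade coincidence on a pair of \emph{infinite} words to coincidence on a pair of \emph{periodic} infinite words. I would study the coincidence set
\[
C := \{(\bi,\bj) \in \Sigma\times\Sigma \colon H_{\bi} \equiv H_{\bj} \text{ on } [0,1]\},
\]
which is closed in the product topology by continuity of $\bi \mapsto H_{\bi}$, and which, thanks to the cocycle structure of~\eqref{eq:H_i(x)}, should be invariant under the diagonal shift $\sigma\times\sigma$. Once $C$ contains some pair $(\bi^*,\bj^*)$ off the diagonal, a standard minimality argument in symbolic dynamics (passing to a minimal non-diagonal subsystem of $\overline{\{(\sigma^n\bi^*,\sigma^n\bj^*)\}_n}$ and invoking Birkhoff recurrence) should produce periodic witnesses $(\bi)^\infty,(\bj)^\infty \in C$ with $\bi\neq\bj$, at which point~\cref{thm:SubConjugation}\ref{it:conj2} concludes that $\Phi$ is sub-conjugated to a self-similar IFS.

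The principal obstacle, and the reason the statement is posed as a conjecture rather than a theorem, is precisely the production of periodic witnesses from the infinite coincidence $H_{\bi^*}\equiv H_{\bj^*}$. In the self-similar analogue, the parallel question is known to fail: Baker and B\'ar\'any--K\"aenm\"aki constructed self-similar IFSs with super-exponential condensation but without exact overlaps, where the analogous coincidence set does not contain periodic pairs. The plausibility of the conjecture in the strictly non-linear analytic regime rests on the expectation that analyticity of each $H_{\bi}$, together with the cocycle identity underlying~\eqref{eq:H_i(x)} and the dual IFS framework developed in~\cref{sec:DualIFSFull}, should render the coincidence $H_{\bi^*}\equiv H_{\bj^*}$ a highly rigid algebraic condition, forcing either exact overlaps (ruled out by hypothesis) or sub-conjugation. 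Making this rigidity precise, perhaps via a unique continuation argument for the dual IFS or a dimension-theoretic analysis of $C$, is where the essential difficulty lies.
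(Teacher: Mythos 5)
This statement is labelled a \emph{conjecture} in the paper; the authors offer no proof, and your write-up correctly recognises that the essential gap lies in upgrading the coincidence $H_{\bi^*}\equiv H_{\bj^*}$ on infinite words to a coincidence on periodic words, which is exactly what \cref{thm:SubConjugation}\ref{it:conj2} requires.

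That said, the route you sketch has two concrete defects beyond the acknowledged difficulty. First, the coincidence set $C$ is not invariant under the diagonal shift in the regime you need it. The recursion $H_{\bi}=F_{i_1}H_{\sigma\bi}$ lets you peel off a \emph{common} first letter (using injectivity of $F_{i_1}$), but the infinite words produced by the argument in \cref{sec:ProofSufficientCond} are constructed so that $i_1^*\neq j_1^*$, and there $H_{\bi^*}\equiv H_{\bj^*}$ unwinds to $F_{i_1^*}H_{\sigma\bi^*}\equiv F_{j_1^*}H_{\sigma\bj^*}$, which does not yield $H_{\sigma\bi^*}\equiv H_{\sigma\bj^*}$. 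So the shift-orbit closure you want to analyse need not stay inside $C$. Second, even granting shift-invariance, passing to a minimal subsystem of the orbit closure and invoking Birkhoff recurrence does not yield periodic points: minimal subshifts are in general aperiodic (Sturmian systems being the standard example), so this step cannot produce the finite words $\bi,\bj$ needed for \cref{thm:SubConjugation}\ref{it:conj2}. Finally, your framing of the Baker and B\'ar\'any--K\"aenm\"aki examples is slightly off: every self-similar IFS has $H_{\bi}\equiv 0$ for all $\bi$, so it is trivially (sub-)conjugated to a self-similar IFS and those examples confirm rather than threaten the conjecture; they merely show the hypothesis ``super-exponential condensation without exact overlaps'' is non-vacuous. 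The genuinely open part is whether, in the strictly non-linear analytic setting, the rigidity of the dual natural projection forces the periodic coincidence, and no mechanism for that is yet known.
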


We remark that any analytic IFS with an exact overlap is sub-conjugated to a self-similar IFS, see
\cref{thm:DualConj} and \cref{sec:ProofConjugation}.

%%%%%%%%%%%%%%%%%%%%%%%%%%%%%%%%%%%%%%%%%%%%%%%%%%%%%%%%%%%%%%%%%%%%%%%%%%%
%%%%%%%%%%%%%%%%%%%%%%%%%%%%%%%%%%%%%%%%%%%%%%%%%%%%%%%%%%%%%%%%%%%%%%%%%%%
\section{The key idea: the dual IFS induced by analytic functions}\label{sec:DualIFSFull}
The central idea of our work is to construct a `dual' IFS on the space of analytic functions, derived from
the mappings of the original IFS on $[0,1]$.
We believe this notion is of general interest in its own right and a systematic study of it could
assist in tackling other problems in the future as well.

\subsection{Basic definitions and properties}

Let $\mathcal{C}_\epsilon^\omega([0,1])$ be the set of complex analytic maps $f$ on
$\mathcal{B}_{\epsilon}$ such that $f\colon I \to \mathbb{R}$. We equip
$\mathcal{C}_\epsilon^\omega([0,1])$ with the supremum norm $\|\cdot\|_\infty$ over
$\mathcal{B}_\epsilon$. Given an analytic IFS $\Phi=(f_i)_{i=1}^N\in\mathfrak{S}_N$, we `lift' each
map $f_i$ to an operator $F_i:
\mathcal{C}^{\omega}_\epsilon([0,1]) \to \mathcal{C}_\epsilon^{\omega}([0,1])$ acting on the space of analytic
functions by the formula
\begin{equation}\label{eq:LiftedIFS}
	(F_i h)(x)\coloneqq f'_i(x)\cdot h(f_i(x)) + \frac{f''_i}{f'_i}(x).
\end{equation}
We call $\Phi^*\coloneqq(F_i)_{i=1}^N$ the \emph{dual IFS} of $\Phi$. The operator $F_i$ can be
considered as a contractive affinity map and $\Phi^*$ as self-affine IFS on
$\mathcal{C}_\epsilon^\omega([0,1])$.  Indeed, $F_i$ is a translation of the linear operator
$h\mapsto f'_i(x)\cdot h(f_i(x))$, and each $F_i$ is clearly a strict contraction in the supremum
norm since $\|F_ig-F_ih\|_{\infty}\leq \|f'_i\|_{\infty}\cdot\|g-h\|_{\infty}$, where
$\|f_i'\|_\infty<1$ by our assumption \cref{it:c}.

Our objective now is to establish some basic properties about the dual IFS. We first justify calling
$\Phi^*$ an IFS by showing that it has an attractor. We use the convention that if $A\subset
\mathcal{C}^{\omega}_\epsilon([0,1])$, then
\begin{equation*}
	F_iA \coloneqq \{F_i h:\, h\in A\}.
\end{equation*}
\begin{lemma}\label{lem:ExistanceAttractor}
	Let $\Phi^*$ be the dual IFS of an analytic IFS $\Phi\in\mathfrak{S}_N$. There exists a
	unique, non-empty, compact set $\Lambda^*\subset \mathcal{C}^{\omega}_\epsilon([0,1])$,
	which we call the \emph{attractor} of $\Phi^*$, that satisfies
	\begin{equation*}
		\Lambda^*=\bigcup_{i\in\mathcal{I}} F_i\Lambda^*.
	\end{equation*} 
\end{lemma}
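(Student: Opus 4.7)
The plan is to mimic Hutchinson's classical proof \cite{Hutchinson_Attractor_81} verbatim, with the complete metric space being $(\mathcal{C}^\omega_\epsilon([0,1]), \|\cdot\|_\infty)$ in place of Euclidean space. Concretely, I would show that the Hutchinson operator $\mathbf{F}\colon A \mapsto \bigcup_{i=1}^N F_i A$ is a strict contraction on the hyperspace of non-empty compact subsets under the Hausdorff metric, and then invoke Banach's fixed-point theorem to obtain the unique fixed point $\Lambda^*$.

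The first step is to verify that $(\mathcal{C}^\omega_\epsilon([0,1]),\|\cdot\|_\infty)$ is a complete metric space: elements of $\mathcal{C}^\omega_\epsilon([0,1])$ are bounded analytic functions on $\mathcal{B}_\epsilon$, and by the Weierstrass convergence theorem a $\|\cdot\|_\infty$-Cauchy sequence converges uniformly on $\mathcal{B}_\epsilon$ to a function that is analytic on $\mathcal{B}_\epsilon$ and still real-valued on $I$. Second, I would check that $F_i$ sends $\mathcal{C}^\omega_\epsilon([0,1])$ into itself: by properties \cref{it:a}--\cref{it:c} the map $f_i$ is analytic on $\mathcal{B}_{2\epsilon}$ with $f_i(\mathcal{B}_\epsilon)\subseteq f_i(\overline{\mathcal{B}_\epsilon})\subseteq \mathcal{B}_\epsilon$ (so $h\circ f_i$ is well-defined and analytic on $\mathcal{B}_\epsilon$), the factor $f_i'$ is analytic on $\mathcal{B}_\epsilon$, and the additive piece $f_i''/f_i'$ is well-defined and analytic thanks to the non-vanishing of $f_i'$ guaranteed by \cref{it:c}. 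The strict contraction estimate $\|F_i g - F_i h\|_\infty \leq \|f_i'\|_\infty \|g-h\|_\infty \leq c_{\max} \|g-h\|_\infty$ with $c_{\max}<1$ is already recorded in the excerpt.

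With these ingredients in place, the conclusion is standard. The hyperspace $\mathcal{K}(\mathcal{C}^\omega_\epsilon([0,1]))$ of non-empty compact subsets of a complete metric space is itself complete under the Hausdorff metric $d_H$, and a direct computation gives
\[
d_H(\mathbf{F}(A),\mathbf{F}(B)) \leq \max_{i\in\mathcal{I}} \operatorname{Lip}(F_i)\cdot d_H(A,B) \leq c_{\max}\cdot d_H(A,B),
\]
so $\mathbf{F}$ is a contraction. Banach's fixed-point theorem then produces a unique non-empty compact $\Lambda^*\subset \mathcal{C}^\omega_\epsilon([0,1])$ with $\mathbf{F}(\Lambda^*)=\Lambda^*$, which is precisely the desired attractor.

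The only mild subtlety — and hence the main ``obstacle'' if one wishes to call it that — is the bookkeeping between the open $\epsilon$-neighbourhood $\mathcal{B}_\epsilon$ (the natural domain for elements of $\mathcal{C}^\omega_\epsilon([0,1])$) and the closed $\overline{\mathcal{B}_\epsilon}$ (where compactness is used to bound $\|f_i'\|_\infty<1$ and to apply \cref{it:b} so that $h\circ f_i$ is in the domain). The buffer provided by $f_i$ being analytic on the strictly larger set $\mathcal{B}_{2\epsilon}$ in \cref{it:a} handles this cleanly, so no genuine difficulty arises.
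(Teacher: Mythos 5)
Your proof follows the same structure as the paper's: verify that each $F_i$ is a strict contraction on a complete metric space and invoke Hutchinson's theorem. The one point where you are too fast is the opening assertion that elements of $\mathcal{C}^\omega_\epsilon([0,1])$ are automatically \emph{bounded} analytic functions on $\mathcal{B}_\epsilon$. The paper's definition requires only analyticity on the open set $\mathcal{B}_\epsilon$ and real values on $I$, so a priori the sup norm over $\mathcal{B}_\epsilon$ may be infinite and $\|\cdot\|_\infty$ is not a genuine metric on the whole of $\mathcal{C}^\omega_\epsilon([0,1])$. The paper resolves this by introducing the bounded slices $\mathcal{C}_{\epsilon,L}^\omega([0,1])=\{g:|g|\le L\text{ on }\mathcal{B}_\epsilon\}$, verifying completeness of each slice via Morera's theorem (your appeal to the Weierstrass convergence theorem would do equally well), and then using the estimate $\|F_ih\|_\infty\le\|f_i''/f_i'\|_\infty+\|f_i'\|_\infty\|h\|_\infty$ to pick a single $L$ with $F_i\colon\mathcal{C}_{\epsilon,L}^\omega\to\mathcal{C}_{\epsilon,L}^\omega$ for every $i$, so the Hutchinson argument runs entirely inside one complete slice. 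Inserting this restriction is a one-line fix, after which your argument goes through essentially verbatim.
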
 

\begin{proof}
For $L>0$, we define 
$$
\mathcal{C}_{\epsilon,L}^{\omega}([0,1])\coloneqq \big\{g\in\mathcal{C}_\epsilon^{\omega}([0,1]):\,
|g(x)|\leq L\text{ for every }x\in\mathcal{B}_\epsilon\big\},
$$
and
$\mathcal{C}_{\epsilon}^{\omega}([0,1])=\bigcup_{L=1}^\infty\mathcal{C}_{\epsilon,L}^{\omega}([0,1])$.
It is well-known that the space of continuous and bounded maps is complete with respect to the
supremum distance. By applying Morera's Theorem~\cite[Theorem 10.17]{Rudin_AnalysisBook}, it follows
that $\mathcal{C}_{\epsilon,L}^{\omega}([0,1])$ is a complete and separable metric space for every
$L>1$.	
	
Since $\|F_ih\|_\infty\leq\|f_i'' / f_i'\|_\infty+\|f_i'\|_\infty\|h\|_\infty$, there exists $L>0$
sufficiently large such that $\|F_ih\|_\infty\leq L$ if $\|h\|_\infty\leq L$. Hence, the claim of
the lemma follows by \cite{Hutchinson_Attractor_81}.
\end{proof}
Let us recall the strong separation condition (SSC) in the context of dual IFS $\Phi^*$. The SSC
holds if $F_i\Lambda^*\cap F_j\Lambda^*=\varnothing$ for every $i\neq j$ in $\mathcal{I}$, that is,
there is no $h\in\Lambda^*$ such that $h\in F_i\Lambda^*$ and $h\in F_j\Lambda^*$.
Using the dual IFS $\Phi^*$ and the attractor $\Lambda^*$ of the dual IFS, Theorem~\ref{thm:main}
can be restated in the following elegant form.
\begin{theorem}\label{thm:DualSSC}
	An IFS $\Phi\in\mathfrak{S}_N$ satisfies the SESC if its dual IFS $\Phi^*$ satisfies the SSC.
\end{theorem}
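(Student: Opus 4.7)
The plan is to invoke Theorem~\ref{thm:main}, so I need to verify its hypothesis~\eqref{eq:H_iSSC} for all distinct $\bi,\bj\in\Sigma\cup\Sigma_*$ with $|\bi|=|\bj|$ under the assumption that $\Phi^*$ satisfies the SSC. A short induction on $|\bi|$ gives the iteration formula $(F_\bi h)(x) = f'_{\bi_n^1}(x)\cdot h(f_{\bi_n^1}(x)) + H_\bi(x)$, whence $H_\bi = F_\bi 0$. For infinite $\bi$, the contractivity of each $F_i$ gives $\pi^*(\bi) = \lim_{n\to\infty} F_{\bi_1^n} 0 = H_\bi$. Since $H_\bi$ is analytic on $\mathcal{B}_\epsilon$, the identity theorem makes $\sup_{x\in[0,1]}|H_\bi(x)-H_\bj(x)|>0$ equivalent to $H_\bi\not\equiv H_\bj$.

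Next I would verify that each $F_i$ is injective on $\mathcal{C}^\omega_\epsilon([0,1])$: if $F_ih_1 = F_ih_2$, then $(h_1-h_2)\circ f_i \equiv 0$, and since $f_i(\mathcal{B}_\epsilon)$ is a non-empty open subset of $\mathcal{B}_\epsilon$, the identity theorem forces $h_1\equiv h_2$. Combined with SSC this gives $F_\bi\Lambda^*\cap F_\bj\Lambda^* = \emptyset$ for all distinct $\bi,\bj\in\Sigma_n$: if $k=|\bi\wedge\bj|$, both cylinders lie in $F_{\bi_1^k}$ applied to the disjoint sets $F_{i_{k+1}}\Lambda^*$ and $F_{j_{k+1}}\Lambda^*$, and injectivity of $F_{\bi_1^k}$ preserves disjointness. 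In particular, for distinct infinite $\bi,\bj\in\Sigma$ with $k=|\bi\wedge\bj|$, the points $\pi^*(\bi)=H_\bi\in F_{\bi_1^k}F_{i_{k+1}}\Lambda^*$ and $\pi^*(\bj)=H_\bj\in F_{\bi_1^k}F_{j_{k+1}}\Lambda^*$ lie in disjoint sets, giving $H_\bi\neq H_\bj$ and establishing~\eqref{eq:H_iSSC} for infinite words.

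The main obstacle is the finite-word case. Suppose for contradiction that $H_\bi\equiv H_\bj$ for some distinct $\bi,\bj\in\Sigma_n$. A telescoping computation starting from~\eqref{eq:H_i(x)} shows $H_\bi = f''_{\bi_n^1}/f'_{\bi_n^1} = (\log|f'_{\bi_n^1}|)'$, so after integration $f_{\bi_n^1}(x) = \lambda f_{\bj_n^1}(x) + t$ for some constants $\lambda>0$ and $t\in\mathbb{R}$. In the degenerate case $(\lambda,t)=(1,0)$, $F_\bi$ and $F_\bj$ coincide as operators, so $F_\bi\Lambda^*=F_\bj\Lambda^*$ contradicts the extended SSC above. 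In the genuinely affine case $(\lambda,t)\neq(1,0)$, I would propagate the equality using the iteration formula: the identity $H_\bi=H_\bj$ immediately yields $H_{\bb\bi}=H_{\bb\bj}$ for every block-word $\bb\in\{\bi,\bj\}^*$, and iterating these identifications while tracking the affine relation forces a corresponding functional equation $\lambda g(\lambda y + t)\equiv g(y)$ to be satisfied by at least one $g\in\Lambda^*$ (arising as an accumulation point in the compact set $\Lambda^*$). Such a $g$ satisfies $F_\bi g = F_\bj g$, placing $F_\bi g\in F_\bi\Lambda^*\cap F_\bj\Lambda^*$ and contradicting SSC. Having verified~\eqref{eq:H_iSSC} in both the infinite and finite cases, Theorem~\ref{thm:main} yields SESC for $\Phi$.
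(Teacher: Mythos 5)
Your overall strategy is the same as the paper's: the paper's proof is simply the one-liner ``follows from Lemma~\ref{lem:SSCEequiv} and Theorem~\ref{thm:main}''. Your handling of the infinite-word case is essentially a re-derivation of the content of Lemma~\ref{lem:SSCEequiv} (items (a)\,$\Rightarrow$\,(c)): you show $H_{\bi}=\pi^*(\bi)$, use injectivity of the $F_i$ to propagate SSC to all levels, and conclude $H_{\bi}\neq H_{\bj}$ when $\bi\wedge\bj$ is finite. That part is correct and in the same spirit as the paper.

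Your finite-word case, however, has a genuine gap. You correctly observe that $H_{\bi}\equiv H_{\bj}$ forces the affine relation $f_{\bi_n^1}=\lambda f_{\bj_n^1}+t$, and you correctly dispose of the degenerate sub-case $(\lambda,t)=(1,0)$ (then $F_{\bi}=F_{\bj}$ as operators, so $F_{\bi}\Lambda^*=F_{\bj}\Lambda^*$, contradicting the extended SSC). In the non-degenerate sub-case, though, your argument asserts but never proves that ``a functional equation $\lambda g(\lambda y+t)\equiv g(y)$ is satisfied by at least one $g\in\Lambda^*$ (arising as an accumulation point in the compact set $\Lambda^*$).'' The propagation $H_{\bb\bi}\equiv H_{\bb\bj}$ that you correctly derive from functoriality ($H_{\bb\bi}=F_{\bb}H_{\bi}=F_{\bb}H_{\bj}=H_{\bb\bj}$) only produces, in the limit $|\bb|\to\infty$, the \emph{trivial} identity $H_{\mathbf{b}}=H_{\mathbf{b}}$ — the tail $\bi$ or $\bj$ is washed out, and no element of $\Lambda^*$ satisfying the eigenfunction equation materialises. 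Without this, you cannot place anything into $F_{\bi}\Lambda^*\cap F_{\bj}\Lambda^*$, and the contradiction with SSC does not follow.

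The paper's route avoids this issue by invoking Lemma~\ref{lem:SSCEequiv} item (d), which gives a \emph{uniform} bound $\sup_x|H_{\bi}(x)-H_{\bj}(x)|>\delta$ over all $\bi,\bj\in\Sigma_*$ with $i_1\neq j_1$; this is precisely what Theorem~\ref{thm:main} needs for the finite words $\bi^*,\bj^*$ that arise in its proof (after the standard telescoping reduction, as in the proof of Proposition~\ref{prop:example}, from a shared prefix to differing first symbols). So rather than re-inventing the finite case via an unfinished functional-equation argument, you should simply cite Lemma~\ref{lem:SSCEequiv}, whose items (c) and (d) furnish exactly the needed input to Theorem~\ref{thm:main}.
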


\cref{thm:SubConjugation} also has an equivalent formalisation as follows:

\begin{theorem}\label{thm:DualConj}
	An IFS $\Phi\in\mathfrak{S}_N$ can be conjugated to a self-similar IFS if and only if the
	attractor of its dual IFS $\Phi^*$ is a singleton. 
	
	Moreover, an IFS $\Phi\in\mathfrak{S}_N$ can be sub-conjugated to a self-similar IFS if and
	only if there exist
	$\bi,\bj\in\Sigma_*$ of the same length such that $F_{\bi}$ and $F_{\bj}$ have the same
	fixed point.
\end{theorem}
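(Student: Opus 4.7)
The plan is to translate \cref{thm:SubConjugation} through a dictionary between the functions $H_{\bi}$ and orbits of the dual IFS $\Phi^*$. The cornerstone of this dictionary, which I will establish first by a straightforward induction on $|\bi|$ from the definition \cref{eq:LiftedIFS}, is the identity
\[
H_{\bi}(x) \;=\; \bigl(F_{i_1} \circ F_{i_2} \circ \cdots \circ F_{i_n}\bigr)(0)(x), \qquad \bi = (i_1,\ldots,i_n) \in \Sigma_*.
\]
Letting $n\to\infty$ for an infinite $\bi \in \Sigma$, uniform convergence gives $H_{\bi} = \pi^*(\bi)$, where $\pi^*$ denotes the natural projection of $\Phi^*$. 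Consequently, $\Lambda^* = \overline{\{H_{\bi} : \bi \in \Sigma\}}$. Moreover, for any $\bi \in \Sigma_*$, the function $H_{\bi^\infty}$ is the unique fixed point of the strict contraction $F_{\bi}$, because $F_{\bi}(H_{\bi^\infty}) = H_{\bi \bi^\infty} = H_{\bi^\infty}$.

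With this dictionary in hand, Part 1 follows immediately from \cref{thm:SubConjugation}(a): $\Phi$ is conjugated to a self-similar IFS iff $H_{\bi}\equiv H_{\bj}$ for every $\bi,\bj\in\Sigma$, iff the set $\{H_{\bi} : \bi \in \Sigma\}$---which is precisely $\Lambda^*$---reduces to a single element.

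For Part 2, I will combine \cref{thm:SubConjugation}(b) with the fixed-point identification. That result characterises sub-conjugation by the existence of distinct same-length $\bi,\bj \in \Sigma_*$ with $H_{\bi^\infty} \equiv H_{\bj^\infty}$, which by the fixed-point identification amounts to $F_{\bi}$ and $F_{\bj}$ sharing a common fixed point lying in $\Lambda^*$---the natural notion of an exact overlap for $\Phi^*$. For the converse, when the exact overlap is witnessed by words $\bu \neq \bv$ of different lengths, I will reduce to the same-length case by passing to $\bu^n$ and $\bv^m$ with $n|\bu| = m|\bv|$. If $\Lambda^*$ is a singleton, Part 1 already supplies (full) conjugation; otherwise a Fine--Wilf-type argument forces $\bu^n \neq \bv^m$, since otherwise $\bu,\bv$ would be powers of a common word $\bw$, making some $F_{\bw^k}$ act as the identity on a non-singleton $\Lambda^*$ in contradiction to strict contractivity.

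The most delicate point, and the hardest to get right, will be the correct formalisation of ``exact overlap'' for the function-space IFS $\Phi^*$: the naive analogue of the definition on $\bbR$ (namely $F_{\bi}\equiv F_{\bj}$ on all of $\Lambda^*$) is strictly stronger than the shared-fixed-point condition that \cref{thm:SubConjugation}(b) actually delivers, so the definition must be calibrated so that the two conditions coincide. Once this bookkeeping is settled, both parts of the theorem will follow as transparent applications of the dictionary.
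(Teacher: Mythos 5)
Your dictionary identifying $H_{\bi}$ with the dual natural projection and $H_{\bi^\infty}$ with the unique fixed point of $F_{\bi}$ is correct and is exactly what the paper's one-line proof invokes through \cref{thm:H_iAnalytic}. Part~1, and the ``exact overlap of $\Phi^*$ $\Rightarrow$ sub-conjugation'' direction of Part~2 (together with the Fine--Wilf reduction of a different-length overlap either to a same-length one, or to the singleton case already covered by Part~1), are handled correctly and in the spirit of what the paper references.

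Your final paragraph, however, is not a proof but an admission of a gap, and the gap is genuine rather than a matter of ``calibration.'' \cref{thm:SubConjugation}(b) delivers only the shared-fixed-point condition $H_{\bi^\infty}\equiv H_{\bj^\infty}$, whereas an exact overlap of $\Phi^*$, read with the paper's definition, requires $F_{\bi}\equiv F_{\bj}$ on all of $\Lambda^*$. These are not the same: the $F_{\bi}$ are affine with distinct linear parts $h\mapsto f'_{\bi_{|\bi|}^1}\cdot\bigl(h\circ f_{\bi_{|\bi|}^1}\bigr)$, so agreement at the single point $H_{\bi^\infty}\in\Lambda^*$ does not propagate to the rest of a non-singleton $\Lambda^*$. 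Concretely, take $\Phi=(x/3,\ x/3+2/3,\ f_3)$ with $f_3$ a generic nonlinear contraction: $\Phi$ is sub-conjugated because $(f_1,f_2)$ is already self-similar and $H_{(1)^\infty}\equiv H_{(2)^\infty}\equiv 0$, yet $F_1 H_{(3)^\infty}\neq F_2 H_{(3)^\infty}$, since equality would force $H_{(3)^\infty}(y)=H_{(3)^\infty}(y+2/3)$; one checks similarly that no pair of distinct finite words satisfies $F_{\bi}|_{\Lambda^*}=F_{\bj}|_{\Lambda^*}$. So the ``only if'' direction of Part~2 does not follow from the dictionary alone: either ``exact overlap of $\Phi^*$'' must be declared to mean the shared-fixed-point condition, in which case Part~2 is a direct restatement of \cref{thm:SubConjugation}(b) and you should say so explicitly, or a further argument is needed that neither your proposal nor the paper's one-line reference to \cref{thm:SubConjugation} and \cref{thm:H_iAnalytic} supplies. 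You should confront this point head-on rather than defer it as bookkeeping.
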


We postpone the explicit proof of \cref{thm:DualSSC} until the end of~\cref{sec:ProofSufficientCond} and
\cref{thm:DualConj} until \cref{sec:ProofConjugation}.

For $\bi\in\Sigma_*$, an induction argument readily gives that the composition
$F_{\bi}h=F_{i_1}\circ\ldots\circ F_{i_{|\bi|}}h$ is equal to
\begin{equation*}%\label{eq:IteratesLiftedIFS}
	(F_{\bi}h)(x) = f_{\bi_{|\bi|}^1}'(x)\cdot h(f_{\bi_{|\bi|}^1}(x)) + \sum_{n=1}^{|\bi|}
	f'_{\bi_{n-1}^1}(x) \cdot 
	\frac{f''_{i_n}}{f'_{i_n}}(f_{\bi_{n-1}^1}(x)).
\end{equation*}
Observe that for every $\bi\in\Sigma_*$, taking $h$ equal to the constant $0$ function gives
$(F_{\bi}0)(x)\equiv H_{\bi}(x)$, which was introduced
in~\cref{eq:H_i(x)}. Moreover, for every $\bi\in\Sigma$
\begin{equation*}%\label{eq:dualnatproj}
	H_{\bi}(x)=\lim_{n\to\infty}(F_{\bi_1^n}h)(x)
\end{equation*}
in the uniform sense for every $h\in\mathcal{C}_\epsilon^\omega([0,1])$. With this interpretation
the function $H_{\bi}(x)$ is an analog of the natural projection $\pi(\bi)$ from~\cref{eq:natProj},
and this motivates us to call $H_{\bi}(x)$ the \emph{dual natural projection}. The following 
further justifies this nomenclature.
\begin{lemma}\label{thm:H_iAnalytic}
	For every $\bi\in\Sigma$, we have $H_{\bi}\in\mathcal{C}_\epsilon^{\omega}([0,1])$.
	Moreover, the map $\bi\mapsto H_{\bi}$ is H\"older continuous, that is, there exists $K>0$
	such that for every distinct $\bi,\bj\in\Sigma$,
	\begin{equation*}%\label{eq:Hcont}
		\|H_{\bi}-H_{\bj}\|_\infty\leq c_{\max}^{|\bi\wedge\bj|}K.
	\end{equation*}
	In particular, $\Lambda^*=\{H_{\bi}(x):\, \bi\in\Sigma\}$.
\end{lemma}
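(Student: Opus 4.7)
My plan is to establish the three claims in sequence, all leveraging the series representation
$$
H_{\bi}(x) = \sum_{n=1}^{\infty} \frac{f''_{i_n}}{f'_{i_n}}(f_{\bi_{n-1}^1}(x)) \cdot f'_{\bi_{n-1}^1}(x)
$$
for $\bi \in \Sigma$. First I would fix $\bi \in \Sigma$ and show that this series converges uniformly on $\overline{\mathcal{B}_\epsilon}$. Assumption \ref{it:b} ensures every iterate $f_{\bi_{n-1}^1}$ sends $\overline{\mathcal{B}_\epsilon}$ into $\mathcal{B}_\epsilon$, so by \ref{it:c} and compactness one obtains $\tilde c \coloneqq \sup\{|f'_i(x)|: x \in \overline{\mathcal{B}_\epsilon},\, i \in \mathcal{I}\} < 1$ and $\tilde\beta \coloneqq \sup\{|f''_i(x)/f'_i(x)|: x \in \overline{\mathcal{B}_\epsilon},\, i \in \mathcal{I}\} < \infty$. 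The chain rule then yields $|f'_{\bi_{n-1}^1}(x)| \leq \tilde c^{\,n-1}$ on $\overline{\mathcal{B}_\epsilon}$, so the $n$-th term of the series is bounded by $\tilde\beta\,\tilde c^{\,n-1}$ and the tail decays geometrically. Each partial sum $H_{\bi_1^n}$ is analytic on $\mathcal{B}_\epsilon$ as a finite sum of products and compositions of analytic maps, so its uniform limit $H_{\bi}$ is analytic by the standard theorem on uniform limits of holomorphic functions (or by Morera's theorem, as in the proof of \cref{lem:ExistanceAttractor}); since $H_{\bi}$ is real on $I$ by construction, $H_{\bi} \in \mathcal{C}_\epsilon^{\omega}([0,1])$.

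For the Hölder continuity, I would observe that if $|\bi \wedge \bj| = k$ then the first $k$ terms of the two series agree, so
$$
H_{\bi} - H_{\bj} = \sum_{n=k+1}^{\infty}\left[\frac{f''_{i_n}}{f'_{i_n}}(f_{\bi_{n-1}^1})\cdot f'_{\bi_{n-1}^1} - \frac{f''_{j_n}}{f'_{j_n}}(f_{\bj_{n-1}^1})\cdot f'_{\bj_{n-1}^1}\right].
$$
Bounding each term of the tail by $2\tilde\beta\,\tilde c^{\,n-1}$ and summing the geometric series yields
$$
\|H_{\bi} - H_{\bj}\|_{\infty} \leq \frac{2\tilde\beta}{1-\tilde c}\,\tilde c^{\,k} = K \tilde c^{\,k},
$$
with $K = 2\tilde\beta/(1-\tilde c)$, which is the desired estimate after identifying $\tilde c$ with the constant $c_{\max}$ of the statement interpreted on the complex neighborhood.

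Finally, for the description of $\Lambda^*$, I would invoke the explicit formula for $(F_{\bi}h)(x)$ displayed just before this lemma: setting $h = 0$ gives $F_{\bi}0 = H_{\bi}$ for $\bi \in \Sigma_*$, and letting $n \to \infty$ along $\bi \in \Sigma$ gives $H_{\bi} = \lim_n F_{\bi_1^n} 0$ uniformly. Hence the map $\pi^{*}\colon \bi \mapsto H_{\bi}$ is continuous from the compact space $\Sigma$, its image is compact, and direct inspection shows $\pi^{*}(\Sigma) = \bigcup_{i \in \mathcal{I}} F_i\pi^{*}(\Sigma)$. The uniqueness part of \cref{lem:ExistanceAttractor} then forces $\pi^{*}(\Sigma) = \Lambda^*$.

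The main delicate point is ensuring uniform convergence on a genuine complex neighborhood of $I$, as this is what lets the limit remain analytic. This relies crucially on \ref{it:b}, which keeps all iterates from escaping $\mathcal{B}_\epsilon$ so that the uniform bound $\tilde c < 1$ from \ref{it:c} is actually available along the chain rule computation. Once this complex estimate is secured, the Hölder bound and the attractor identification are essentially routine consequences of the IFS fixed-point machinery.
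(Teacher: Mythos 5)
Your proposal is correct and follows essentially the same route as the paper: derive uniform bounds for $|f''_j/f'_j|$ and $|f'_j|$ on the complex $\epsilon$-neighbourhood, conclude geometric decay of the series terms, obtain analyticity of the uniform limit via Morera, and then invoke uniqueness of the attractor from \cref{lem:ExistanceAttractor} once $\{H_{\bi}:\bi\in\Sigma\}$ is seen to be compact and invariant. The only cosmetic difference is in the Hölder estimate, where you sum the tail of the series term by term rather than factoring out $f'_{\bi_m^1}$ and reusing the uniform bound $\|H_{\bi}\|_\infty\leq K$; both give the same bound, and both share the paper's mild abuse of reusing the symbol $c_{\max}$ for the supremum of $|f_j'|$ over the complex neighbourhood $\overline{\mathcal{B}_\epsilon}$ rather than over $[0,1]$.
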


\begin{proof}
 From the properties in \cref{it:a,it:b,it:c}, it follows that the maps
 \[
 \frac{f_j''}{f_j'},\quad f_{\bi}, \quad\text{and} \quad f'_{\bi}
 \]
 are analytic on $\mathcal{B}_{2\epsilon}$ for all $\bi\in\Sigma_*$ and $j\in\mathcal{I}$. Hence,
 there exists $C>0$ such that
 \[
   \left|\frac{f_j''}{f_j'}(z)\right| \leq C\text{ and }0<c_{\min}\leq|f_j'(z)|\leq c_{\max}<1
 \]
 for all $j\in\mathcal{I}$ and $z\in\mathcal{B}_{\epsilon}$, $H_{\bi_1^n}$ converges
 uniformly to $H_{\bi}$ on $\mathcal{B}_{\epsilon}$ and $H_{\bi}$ is analytic on
 $\mathcal{B}_{\epsilon}$ by Morera's theorem~\cite[Theorem 10.17]{Rudin_AnalysisBook}. In
 particular, there exists $K>0$ such that $\|H_{\bi}\|_\infty\leq K$ for every
 $\bi\in\Sigma$. Hence, $$
 \|H_{\bi}-H_{\bj}\|_\infty\leq c_{\max}^{|\bi\wedge\bj|}2K.
 $$
 Using the H\"older-continuity, $\{H_{\bi}(x):\, \bi\in\Sigma\}$ is compact, invariant with
 respect to the dual IFS $\Phi^*$, and by the uniqueness of the attractor,
 \cref{lem:ExistanceAttractor}, the last assertion follows.
\end{proof}

We now define cylinder sets for the dual IFS. For two real numbers $a,b$, their convex hull is the interval
$\mathrm{conv}(a,b)=[\min\{a,b\},\max\{a,b\}]$. Slightly abusing notation, a constant
$k\in\mathbb{R}$ also denotes the constant function on $\mathcal{C}_\epsilon^{\omega}([0,1])$. A
\emph{cylinder set} on $\mathcal{C}_\epsilon^{\omega}([0,1])$ is given by 
$$
(k,K)\coloneqq \big\{g\in\mathcal{C}_\epsilon^{\omega}([0,1]):\, k<g(x)<K\text{ for every
}x\in[0,1]\big\}.
$$
Since each $F_i$ is a contraction, there exists $k<K$ such that
\begin{equation*}
  k<\min_{i\in\mathcal{I}} \{\min_{x\in[0,1]}(F_ik)(x), \min_{x\in[0,1]}(F_iK)(x)\} \;\text{ and
  }\; K>\max_{i\in\mathcal{I}} \{\max_{x\in[0,1]}(F_ik)(x), \max_{x\in[0,1]}(F_iK)(x)\},
\end{equation*} 
moreover, $F_i(\overline{(k,K)})\subseteq (k,K)$ for every $i\in\mathcal{I}$.
The image of any cylinder $(k,K)$ under $F_{\bi}$ for any
$\bi\in\Sigma_*$ has width
\begin{equation*}
	\max_{x\in[0,1]} |f'_{\bi}(x)|\cdot (K-k) < c_{\max}^{|\bi|} (K-k),
\end{equation*}
where $c_{\max}$ is as in \cref{eq:maxmincontract}. 

We say that two cylinder sets $ F_{\bi}(k,K)$ and $F_{\bj}(k,K)$ are \emph{disjoint}, which we
denote by $F_{\bi}(k,K)\cap F_{\bj}(k,K)=\varnothing$, if there exists $x\in[0,1]$ such that
\begin{equation}\label{eq:DisjointCylinders}
	\mathrm{conv}\big((F_{\bi}k)(x), (F_{\bi}K)(x)\big) \cap \mathrm{conv}\big((F_{\bj}k)(x),
	(F_{\bj}K)(x)\big)= \varnothing.
\end{equation}
If they are not disjoint, we write $F_{\bi}(k,K)\cap F_{\bj}(k,K)\neq\varnothing$.
See~\cref{fig:Cylinders} for an illustration.

\begin{figure}[t]
\includegraphics[width=\linewidth]{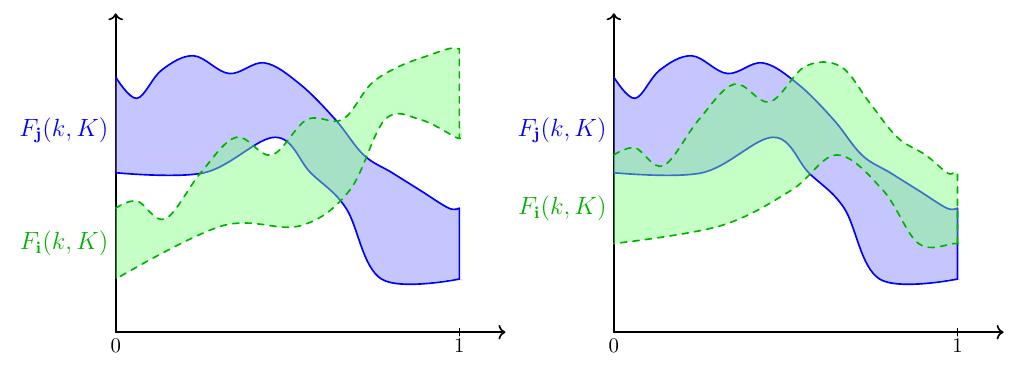}
\caption{Illustration of disjoint cylinders on the left and ones which are not disjoint on the
right.}\label{fig:Cylinders}
\end{figure}

\begin{lemma}\label{lem:SSCEequiv}
The following statements are equivalent: 
\begin{enumerate}[label=(\alph*)]
\item\label{it:la} the dual IFS $\Phi^*$ satisfies the SSC;
\item\label{it:lb} there exist $k<K$ and $n\geq1$ such that $F_{i}\overline{(k,K)}\subseteq(k,K)$ for every
  $i\in\mathcal{I}$ and for every $\bi,\bj\in\Sigma_n$ with $i_1\neq j_1$ we have
\begin{equation*}
	F_{\bi}(k,K)\cap F_{\bj}(k,K)=\varnothing; 
\end{equation*}
\item\label{it:lc} there exists $\delta>0$ such that for all $\bi,\bj \in\Sigma$ with $i_1\neq j_1$ we have
  $\sup_{x\in[0,1]} |H_{\bi}(x) - H_{\bj}(x)| > \delta$;
\item\label{it:ld} there exists $\delta>0$ such that for all $\bi,\bj \in\Sigma_*$ with $i_1\neq j_1$ we have
  $\sup_{x\in[0,1]} |H_{\bi}(x) - H_{\bj}(x)| > \delta$.
\end{enumerate}
\end{lemma}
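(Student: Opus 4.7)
I propose proving the four statements are equivalent by closing the cycle (d)$\Rightarrow$(c)$\Rightarrow$(a)$\Rightarrow$(b)$\Rightarrow$(d). The first two implications are short. For (d)$\Rightarrow$(c), given $\bi,\bj\in\Sigma$ with $i_1\neq j_1$ I apply the hypothesis to the prefixes $\bi|_m,\bj|_m\in\Sigma_m$ (whose first symbols still differ) and pass to the limit as $m\to\infty$; the uniform convergence $H_{\bi|_m}\to H_\bi$ from \cref{thm:H_iAnalytic} yields $\sup_{[0,1]}|H_\bi-H_\bj|\geq\delta$. For (c)$\Rightarrow$(a), if some $g\in F_i\Lambda^*\cap F_j\Lambda^*$ with $i\neq j$, then by \cref{thm:H_iAnalytic} we write $g=H_{i\bk_1}=H_{j\bk_2}$ for $\bk_1,\bk_2\in\Sigma$, and the pair $(i\bk_1,j\bk_2)\in\Sigma^2$ has distinct first letters but $\sup_{[0,1]}|H_{i\bk_1}-H_{j\bk_2}|=0$, contradicting (c).

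For (a)$\Rightarrow$(b), I will fix $(k,K)$ with $k<0<K$ such that $F_i\overline{(k,K)}\subseteq(k,K)$, enlarging if necessary to ensure $0\in(k,K)$. The SSC yields a uniform separation $\eta>0$ between the pairwise disjoint compact sets $F_i\Lambda^*$ in the sup-over-$[0,1]$ seminorm; here I use that on the compact $\Lambda^*$ the norm $\|\cdot\|_\infty$ and the $\sup_{[0,1]}$-seminorm induce the same topology, by the identity theorem and compactness. A level-$n$ cylinder $F_\bi\overline{(k,K)}$ has $\sup_{[0,1]}$-diameter at most $c_{\max}^n(K-k)$, and its base point $H_\bi=F_\bi 0$ lies within $c_{\max}^n K'$ of $H_{\bi\mathbf{1}^\infty}\in F_{i_1}\Lambda^*$ by the tail bound from \cref{thm:H_iAnalytic}. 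Choosing $n$ so that both error terms are smaller than $\eta/4$ forces the level-$n$ cylinders with distinct first letters to have disjoint pointwise intervals at the $x_0$ where $|H_{\bi\mathbf{1}^\infty}(x_0)-H_{\bj\mathbf{1}^\infty}(x_0)|\geq\eta$, which is exactly (b).

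For (b)$\Rightarrow$(d) I split into two cases depending on whether both $\bi,\bj\in\Sigma_*$ have length at least $n$. When $|\bi|,|\bj|\geq n$, iterating $F_i\overline{(k,K)}\subseteq(k,K)$ from $0\in\overline{(k,K)}$ gives $H_{\bi_{n+1}^{|\bi|}}\in\overline{(k,K)}$, hence $H_\bi=F_{\bi|_n}H_{\bi_{n+1}^{|\bi|}}\in F_{\bi|_n}\overline{(k,K)}$; the $x_0$ supplied by (b) then places $H_\bi(x_0)$ and $H_\bj(x_0)$ in disjoint pointwise intervals, so their separation exceeds the minimum gap $\delta_0>0$ taken over the finitely many pairs of level-$n$ prefixes with different first symbols. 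For the finitely many short pairs with $|\bi|<n$ or $|\bj|<n$, I must separately verify $\sup_{[0,1]}|H_\bi-H_\bj|>0$, and the final $\delta$ is the minimum of $\delta_0$ and these positive values.

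The main obstacle is precisely the short-word subcase of (b)$\Rightarrow$(d). Supposing for contradiction that $H_\bi\equiv H_\bj$ for some short pair, the identity $H_{\bi\bk}-H_{\bj\bk}=f'_\bi\cdot(H_\bk\circ f_\bi)-f'_\bj\cdot(H_\bk\circ f_\bj)$ combined with the long-word bound from the previous case forces the linear-in-$H$ operator $H\mapsto f'_\bi\cdot H\circ f_\bi - f'_\bj\cdot H\circ f_\bj$ to be uniformly bounded below by $\delta_0$ in $\sup_{[0,1]}$-norm on $\Lambda^*$; combined with the fact that $\Lambda^*$ contains functions $H_{\bm\mathbf{1}^\infty}$ that by iteration approximate the dual-IFS orbit of $0$ (yielding functions on which this linear operator is small), this should produce the required contradiction. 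Controlling this short-word propagation from the quantitative level-$n$ cylinder disjointness is where the real work lies.
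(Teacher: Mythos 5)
Your three implications (d)$\Rightarrow$(c), (c)$\Rightarrow$(a) and (a)$\Rightarrow$(b) are correct and essentially parallel the paper's reasoning (the paper handles (a)$\Leftrightarrow$(b) by a compactness/contradiction argument and then invokes compactness of $\Lambda^*$ together with the H\"older continuity of $\bi\mapsto H_\bi$ for the remaining equivalences, leaving details to the reader). Your route is a cycle rather than a star around (a), which is a legitimate reorganisation, and your quantitative tracking of the cylinder diameter $c_{\max}^n(K-k)$ and the base-point distance to $\Lambda^*$ in (a)$\Rightarrow$(b) is a clean way to find the separating point $x_0$.

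The problem is in (b)$\Rightarrow$(d), and you correctly flag it yourself. Two issues. First, the minor one: the pair $(k,K)$ provided by (b) need not satisfy $0\in(k,K)$, and you cannot freely enlarge $(k,K)$ to include $0$ without possibly destroying the level-$n$ disjointness; you can circumvent this by instead taking $|\bi|,|\bj|$ large enough (say $\geq n+M_0$) so that $H_{\bi_{n+1}^{|\bi|}}$ is forced into $(k,K)$ by its exponential proximity to $\Lambda^*$, but this pushes the threshold for ``long'' words up. Second, and more seriously, for the finitely many remaining short pairs $\bi,\bj\in\Sigma_*$ with $i_1\neq j_1$ you still have to rule out $H_\bi\equiv H_\bj$, and condition (b) gives you no direct handle: the cylinder disjointness only constrains $F_{\bi'}(k,K)$ for $\bi'\in\Sigma_n$, while $H_\bi=F_\bi 0$ for short $\bi$ can lie far from the attractor $\Lambda^*$ and outside all those cylinders. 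The heuristic you sketch — that a short-word coincidence $H_\bi\equiv H_\bj$ forces the linear operator $h\mapsto f_\bi'\cdot h\circ f_\bi - f_\bj'\cdot h\circ f_\bj$ to be bounded below on $\Lambda^*$, and that $\Lambda^*$ ``contains functions approximating the orbit of $0$'' on which it is small — is not substantiated: $\Lambda^*$ is the attractor and in general does not approximate the constant function $0$ or its finite orbit, so no contradiction falls out. Note that $H_\bi\equiv H_\bj$ is equivalent to $f_{\bi_{|\bi|}^1}$ being an affine reparametrisation $c\cdot f_{\bj_{|\bj|}^1}+d$ of the other composition, and showing that this is incompatible with the SSC of the dual IFS is exactly the step that needs a real argument. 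As written, the proposal does not close the cycle, so the equivalence is not established.
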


\begin{proof}
  \cref{it:lb}$\Rightarrow$\cref{it:la}: Let $k<K$ be such that
  $F_{i}\overline{(k,K)}\subseteq(k,K)$, and so, $\Lambda^*\subseteq(k,K)$. Thus, the
  implication  is clear. For the other direction, \cref{it:la}$\Rightarrow$\cref{it:lb}, let us
  argue by contradiction. That is, for every $n\geq1$ there exist $\bi,\bj\in\Sigma_n$ with
  $i_1\neq j_1$ such that $F_{\bi}(k,K)\cap F_{\bj}(k,K)\neq\varnothing$. By the compactness
  of $\Sigma$, there exist a subsequence $n_\ell$ and $\bi,\bj\in\Sigma$ with $i_1\neq j_1$
  such that $F_{\bi_1^{n_\ell}}(k,K)\cap F_{\bj_1^{n_\ell}}(k,K)\neq\varnothing$ for every
  $\ell\geq1$. In particular, \cref{eq:DisjointCylinders} implies that for every $x\in[0,1]$
  there exists $y\in\mathbb{R}$ such that $y\in\mathrm{conv}\big((F_{\bi_1^{n_\ell}}k)(x),
    (F_{\bi_1^{n_\ell}}K)(x)\big) \cap \mathrm{conv}\big((F_{\bj_1^{n_\ell}}k)(x),
  (F_{\bj_1^{n_\ell}}K)(x)\big)$. Hence,
  $$
  |F_{\bi_1^{n_\ell}}K(x)-F_{\bj_1^{n_\ell}}K(x)|\leq
  |F_{\bi_1^{n_\ell}}K(x)-y|+|y-F_{\bj_1^{n_\ell}}K(x)|\leq 2c_{\max}^{n_\ell}(K-k).
  $$
  Thus,
  $H_{\bi}=\lim_{\ell\to\infty}F_{\bi_1^{n_\ell}}K=\lim_{\ell\to\infty}F_{\bj_1^{n_\ell}}K=H_{\bj}$
  uniformly, which contradicts to SSC.

  The implications \cref{it:la}$\Leftrightarrow$\cref{it:lc} and
  \cref{it:la}$\Leftrightarrow$\cref{it:ld} follow by the compactness of $\Lambda^*$,
  \cref{lem:ExistanceAttractor}, and the H\"older continuity of the dual natural projection,
  \cref{thm:H_iAnalytic}. We leave the details for the reader.
\end{proof}

\begin{lemma}\label{lem:cont}
  Let $\Phi^*$ be the dual IFS of an analytic IFS $\Phi\in\mathfrak{S}_N$ such that $\Phi^*$
  satisfies the SSC. Then there exists $\varepsilon>0$ such that for every $\Psi\in\mathfrak{S}_N$
  with $d_2(\Phi,\Psi)<\varepsilon$, the dual IFS $\Psi^*$ of $\Psi$ satisfies the SSC.
\end{lemma}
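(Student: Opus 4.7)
The plan is to quantify the hypotheses of Lemma~\ref{lem:SSCEequiv}\cref{it:lb} for $\Phi^*$ and then show they persist under small $\mathcal{C}^2$ perturbations of $\Phi$. Fix $k<K$ and $n\geq 1$ as provided by \cref{it:lb}, and denote by $\tilde F_i$ the dual operators associated with a perturbation $\Psi=(g_i)_{i\in\mathcal{I}}$.

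First, I would convert \cref{it:lb} into quantitative margins. Condition~\cref{it:c} together with analyticity on the connected neighbourhood $\mathcal{B}_{2\epsilon}$ forces $f_i'$ to be non-vanishing, so $f_i'$ has constant sign on $[0,1]$. Therefore, for any $h\in\overline{(k,K)}$, the map $(F_ih)(x)$ is a monotone affine function of $h(f_i(x))\in[k,K]$, and the absorbing condition $F_i\overline{(k,K)}\subseteq(k,K)$ is equivalent to $\mathrm{conv}((F_ik)(x),(F_iK)(x))\subseteq(k,K)$ for every $x\in[0,1]$ and every $i\in\mathcal{I}$. Compactness then yields $\eta>0$ such that each such convex hull is contained in $[k+\eta,K-\eta]$. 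The disjointness assumption for the finitely many pairs $\bi,\bj\in\Sigma_n$ with $i_1\neq j_1$ gives (after possibly shrinking $\eta$) a witness $x_{\bi,\bj}\in[0,1]$ at which the two convex hulls $\mathrm{conv}((F_{\bi}k)(x_{\bi,\bj}),(F_{\bi}K)(x_{\bi,\bj}))$ and $\mathrm{conv}((F_{\bj}k)(x_{\bi,\bj}),(F_{\bj}K)(x_{\bi,\bj}))$ are separated by at least $\eta$.

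Second, I would establish that finite iterates of the dual operators depend continuously on the base IFS. Using the formula $(F_{\bi}c)(x)=f'_{\bi_{|\bi|}^1}(x)\cdot c + H_{\bi}(x)$ for constants $c$, together with the iterated product and chain rules, the functions $f'_{\bi_{|\bi|}^1}$ and $H_{\bi}$ for $|\bi|\leq n$ depend continuously, in the supremum norm on $[0,1]$, on $\Phi\in\mathfrak{S}_N$ equipped with $d_2$. Indeed, $\mathcal{C}^2$ closeness of each $g_i$ to $f_i$ together with the invariance $f_i([0,1])\subseteq[0,1]$ yields uniform closeness on $[0,1]$ of the length-$n$ compositions and their first derivatives, and the coefficients $g_i''/g_i'$ stay uniformly close to $f_i''/f_i'$ thanks to the uniform lower bound $|f_i'|\geq c_{\min}>0$ which is preserved in a $\mathcal{C}^2$ neighbourhood.

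Finally, choose $\varepsilon>0$ so small that for every $\Psi$ with $d_2(\Phi,\Psi)<\varepsilon$ we have $\sup_{x\in[0,1]}|(\tilde F_{\bi}c)(x)-(F_{\bi}c)(x)|<\eta/3$ for every $c\in\{k,K\}$ and every word $\bi\in\Sigma_m$ with $m\leq n$ (a finite collection of conditions). Then the absorbing condition for $\tilde F_i$ and the pointwise disjointness at each $x_{\bi,\bj}$ both persist with margin at least $\eta/3$, so $\Psi^*$ satisfies Lemma~\ref{lem:SSCEequiv}\cref{it:lb} and hence the SSC. The main subtlety is handling the absorbing property for arbitrary analytic $h\in\overline{(k,K)}$ rather than only for constants; this is resolved by the constant-sign argument for $g_i'$ (valid for $\Psi$ sufficiently close to $\Phi$), which reduces the infinite-dimensional check to evaluating the two constant functions $k$ and $K$.
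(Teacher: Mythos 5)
Your proof is correct, and it takes a genuinely different (and slightly lighter) route than the paper. You reduce to the finitary characterization of the dual SSC, namely Lemma~\ref{lem:SSCEequiv}\cref{it:lb}: an absorbing interval $(k,K)$ plus pointwise disjointness of the level-$n$ cylinders at witnesses $x_{\bi,\bj}$. Since there are only finitely many words of length at most $n$, you only need continuity of the finite compositions $\bi\mapsto (F_{\bi}k)$, $\bi\mapsto(F_{\bi}K)$ in the $d_2$ metric on $\mathfrak S_N$, plus a compactness argument to extract a margin $\eta$; the reduction of the absorbing condition to the two constant test functions $k,K$ via the constant-sign (hence monotone-affine) structure of $F_i$ is the right observation and you correctly flag it as the one subtlety. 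The paper instead works with the infinitary characterization \cref{lem:SSCEequiv}\cref{it:lc}/\cref{it:ld}: it proves the quantitative Lipschitz estimate~\eqref{eq:conteq}, $\|H_{\bi}-\hat H_{\bi}\|_\infty\le C'\,d_2(\Phi,\Psi)$, with a constant $C'$ \emph{uniform over all} $\bi\in\Sigma_*$ (this requires the geometric-series bookkeeping in~\eqref{eq:tec1}--\eqref{eq:tec3}), and then perturbs the $\delta$-separation of the $H_{\bi}$'s directly. Your argument avoids having to chase the uniformity of the constant in the word length, at the cost of first packaging the problem through part~\cref{it:lb} of Lemma~\ref{lem:SSCEequiv}; the paper's version yields the explicit modulus $\varepsilon=\delta/(3C')$ and a reusable quantitative estimate. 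Both are complete proofs.
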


\begin{proof}
  Let $\Phi=(f_i)_{i=1}^N\in\mathfrak{S}_N$. Let $c_{\min}$ and $c_{\max}$ as in
  \cref{eq:maxmincontract} for $\Phi$. Moreover, let $C>0$ be such that
  $|f_i''(x)|<\left|\frac{f_i''}{f_i'}(x)\right|<C$ and
  $\left|\frac{f_i'''(x)f_i'(x)-(f_i''(x))^2}{(f_i'(x))^2}\right|<C$ for every $i\in\mathcal{I}$ and
  $x\in[0,1]$. By using the continuity of the maps and its derivatives, one can choose
  $\varepsilon>0$ sufficiently small such that for every $\Psi=(g_i)_{i=1}^N\in\mathfrak{S}_N$ with
  $d_2(\Phi,\Psi)<\varepsilon$, 
  $$
  c_{\min}\leq|g_i'(x)|\leq c_{\max}\quad\text{ and }\quad |g_i''(x)|<\left|\frac{g_i''}{g_i'}(x)\right|<C
  $$
  for every $i\in\mathcal{I}$ and $x\in[0,1]$. Moreover, for every $x\in[0,1]$
  \begin{equation}\label{eq:tec1}
    \left|\frac{f_i''}{f_i'}(x)-\frac{g_i''}{g_i'}(x)\right|\leq
    \frac{c_{\max}+C}{c_{\min}^{2}}d_2(\Phi,\Psi).
  \end{equation}

  Observe that for every $\bi\in\Sigma_*$ and $x\in[0,1]$
  $$
  |f_{\bi}(x)-g_{\bi}(x)|\leq\|f_{i_1}'\|_{\infty}\cdot
  |f_{\bi_2^{|\bi|}}(x)-g_{\bi_2^{|\bi|}}(x)|+\sup_{y\in[0,1]}|f_{i_1}(y)-g_{i_1}(y)|.
  $$ 
  Thus,
  \begin{equation}\label{eq:tec2}
    \sup_{x\in[0,1]}|f_{\bi}(x)-g_{\bi}(x)|\leq \frac{1}{1-c_{\max}}d_2(\Phi,\Psi).
  \end{equation}
  On the other hand, for every $\bi\in\Sigma_*$ and $x\in[0,1]$
  \begin{align*}
    |f_{\bi}'(x)-g_{\bi}'(x)|
    &\leq
    c_{\max}|f_{\bi_2^{|\bi|}}'(x)-g_{\bi_2^{|\bi|}}'(x)|+
    c_{\max}^{|\bi|-1}|f_{i_1}'(f_{\bi_2^{|\bi|}}(x))-g_{i_1}'(g_{\bi_2^{|\bi|}}(x))|\\
    &\leq c_{\max}|f_{\bi_2^{|\bi|}}'(x)-g_{\bi_2^{|\bi|}}'(x)|+c_{\max}^{|\bi|-1}\left(d_2(\Phi,\Psi)+C|f_{\bi_2^{|\bi|}}(x)-g_{\bi_2^{|\bi|}}(x)|\right)\\
    &\leq c_{\max}|f_{\bi_2^{|\bi|}}'(x)-g_{\bi_2^{|\bi|}}'(x)|+c_{\max}^{|\bi|-1}\frac{C+1}{1-c_{\max}}d_2(\Phi,\Psi).
  \end{align*}
  Thus, by induction
  \begin{equation}\label{eq:tec3}
    \sup_{x\in[0,1]}|f_{\bi}'(x)-g_{\bi}'(x)|\leq |\bi|\cdot c_{\max}^{|\bi|-1}\frac{C+1}{1-c_{\max}}d_2(\Phi,\Psi).
  \end{equation}

  Combining \cref{eq:tec1,eq:tec2,eq:tec3}, we get that for every $\bi\in\Sigma_*$
  \begin{align*}
&\left|
f'_{\bi_{|\bi|-1}^1}(x) \cdot 
\frac{f''_{i_{|\bi|}}}{f'_{i_{|\bi|}}}(f_{\bi_{|\bi|-1}^1}(x))-
g'_{\bi_{|\bi|-1}^1}(x) \cdot 
\frac{g''_{i_{|\bi|}}}{g'_{i_{|\bi|}}}(g_{\bi_{|\bi|-1}^1}(x))\right|\\
&\leq C |f'_{\bi_{|\bi|-1}^1}(x)-g'_{\bi_{|\bi|-1}^1}(x)|+c_{\max}^{|\bi|-1}\left|
\frac{f''_{i_{|\bi|}}}{f'_{i_{|\bi|}}}(f_{\bi_{|\bi|-1}^1}(x))-
\frac{g''_{i_{|\bi|}}}{g'_{i_{|\bi|}}}(g_{\bi_{|\bi|-1}^1}(x))\right|\\
&\leq C |f'_{\bi_{|\bi|-1}^1}(x)-g'_{\bi_{|\bi|-1}^1}(x)|+c_{\max}^{|\bi|-1}\left(C\left|f_{\bi_{|\bi|-1}^1}(x)-
g_{\bi_{|\bi|-1}^1}(x)\right|\hspace{-1pt}+\hspace{-1pt}\left|\frac{f_{i}''}{f_i'}(g_{\bi_{|\bi|-1}^1}(x))-\frac{g_{i}''}{g_i'}(g_{\bi_{|\bi|-1}^1}(x))\right|\right)\\
&\leq\left(\frac{|\bi|\cdot C(C+1)+C}{1-c_{\max}}+\frac{c_{\max}+C}{c_{\min}^2}\right)c_{\max}^{|\bi|-1}d_2(\Phi,\Psi).
  \end{align*}
  In particular, there exists $C'>0$ depending on $\Phi$ such that
\begin{equation}\label{eq:conteq}
|H_{\bi}(x)-\hat{H}_{\bi}(x)|\leq C'd_2(\Phi,\Psi),
\end{equation}
where $H_{\bi}$ denotes the dual projection of $\Phi^*$ and $\hat{H}_{\bi}$ denotes the dual projection of $\Psi^*$.

Now, if the dual IFS $\Phi^*$ of $\Phi=(f_i)_{i=1}^N\in\mathfrak{S}_N$ satisfies the SSC then by
\cref{lem:SSCEequiv} there exists $\delta>0$ such that 
$\sup_{x\in[0,1]}|H_{\bi}(x)-H_{\bj}(x)|\geq\delta$ for every $\bi,\bj\in\Sigma$ with
$i_1\neq j_1$. Hence, by \cref{eq:conteq} and \cref{lem:SSCEequiv}, for every $\Psi$ with
$d_2(\Phi,\Psi)<\delta/(3C')$ the dual $\Psi^*$ satisfies the SSC.	
\end{proof}

%%%%%%%%%%%%%%%%%%%%%%%%%%%%%%%%%%%%%%%%%%%%%%%%%%%%%%%%%%%%%%%%%%%%%%%%%%%
\subsection{Further analysis of the dual natural projection \texorpdfstring{$H_{\bi}$}{H}}

After establishing that $H_{\bi}$ is analytic, we wish to obtain bounds on its derivatives. To
simplify notation we write $f^{(k)}$ to refer to the $k$-th derivative of $f$. 
For any $\bi\in\Sigma_*$, using the chain rule we get $f_{\bi_{|\bi|}^1}'(x)=
\prod_{n=1}^{|\bi|}f_{i_n}'(f_{\bi_{n-1}^1}(x))$. From here, a simple calculation yields %now shows
that for every finite word $\bi\in\Sigma_*$, $H_{\bi}$ reduces to
\begin{equation}\label{eq:H=f''/f'}
  H_{\bi}(x)=H_{\bi_1^{|\bi|}}(x) = \frac{f''_{\bi_{|\bi|}^1}(x)}{f'_{\bi_{|\bi|}^1}(x)}.
\end{equation}
Another way of writing $H_{\bi}$ for $\bi\in\Sigma\cup\Sigma_*$ is
\begin{equation}
  H_{\bi}(x) = \sum_{n=1}^{|\bi|} (\phi_{i_n}\circ f_{\bi_{n-1}^1})'(x),
  \label{eq:alternativeH}
\end{equation}
where $\phi_{i_k}(x)\coloneqq \log|f'_{i_k}(x)|$. Recall that the $k$-th derivative of the
composition of two functions can be calculated using Fa\`a di Bruno's formula:
\begin{equation}\label{eq:FaaDiBruno}
  (f\circ g)^{(k)}(x) = \sum_{\pi\in \Pi_k} f^{(|\pi|)}(g(x)) \cdot \prod_{B\in\pi}
  g^{(|B|)}(x),
\end{equation}
where $\Pi_k$ is the set of all partitions of $\{1,\dots,k\}$, and $B\in \pi$ refers to the
elements, or blocks, of the partition $\pi$. Finally, let $g_k:\bbR^k\to \bbR$ be a $k$-variable
polynomial such that $g_1(y_1)=y_1$ and the next one is given by the formula
\[
g_{k+1}(y_{k+1},\dots,y_1)\coloneqq \sum_{\ell=1}^k \frac{\partial g_k}{\partial
y_{\ell}}(y_k,\dots,y_1)\cdot y_{\ell+1} +
g_k(y_k,\dots,y_1)\cdot y_1.
\]
In particular, $g_2(y_2,y_1)=y_2+y_1^2$, $g_3(y_3,y_2,y_1) = y_3+3y_1y_2+y_1^3$ and so on. We are
now ready to give a formula for $H_{\bi}^{(k)}$, the $k$th derivatives of $H_{\bi}$.

\begin{lemma}
\label{thm:ugly}
For any $\bi\in\Sigma\cup\Sigma_*$, the $k$-th derivative of $H_{\bi}$ is given by
\[
H_{\bi}^{(k)}(x)
=\sum_{\pi\in\Pi_{k+1}} \sum_{n=1}^{|\bi|} \phi_{i_n}^{(|\pi|)}(f_{\bi_{n-1}^1}(x))\cdot
(f'_{\bi_{n-1}^1}(x))^{|\pi|}\cdot \prod_{B\in\pi}
g_{|B|-1}\big(H_{\bi_1^{n-1}}^{(|B|-2)}(x),\dots,H_{\bi_1^{n-1}}(x)\big).
\]
\end{lemma}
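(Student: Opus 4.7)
The plan is to combine Faà di Bruno's formula~\cref{eq:FaaDiBruno} with an auxiliary identity that expresses the higher derivatives of $f_{\bi_{n-1}^1}$ in closed form via the polynomials $g_m$ evaluated at derivatives of $H_{\bi_1^{n-1}}$. Starting from the alternative representation~\cref{eq:alternativeH} and differentiating term by term $k$ times, I first obtain
\[
H_{\bi}^{(k)}(x) = \sum_{n=1}^{|\bi|} (\phi_{i_n}\circ f_{\bi_{n-1}^1})^{(k+1)}(x).
\]
For $\bi\in\Sigma$ this term-by-term differentiation is justified by the uniform convergence of $H_{\bi_1^n}$ and all its derivatives on $\mathcal{B}_\epsilon$, which follows from~\cref{thm:H_iAnalytic} together with Cauchy's estimates. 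Applying~\cref{eq:FaaDiBruno} with $k+1$ in place of $k$ to each summand gives
\[
H_{\bi}^{(k)}(x) = \sum_{n=1}^{|\bi|}\sum_{\pi\in\Pi_{k+1}} \phi_{i_n}^{(|\pi|)}(f_{\bi_{n-1}^1}(x))\cdot \prod_{B\in\pi} f_{\bi_{n-1}^1}^{(|B|)}(x),
\]
so the task reduces to expressing each $f_{\bi_{n-1}^1}^{(|B|)}(x)$ in the form demanded by the lemma.

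The key auxiliary identity, with the convention $g_0\equiv 1$, is
\[
f_{\bi_{n-1}^1}^{(m)}(x) = f'_{\bi_{n-1}^1}(x)\cdot g_{m-1}\bigl(H_{\bi_1^{n-1}}^{(m-2)}(x),\ldots,H_{\bi_1^{n-1}}(x)\bigr),\qquad m\ge 1,
\]
which I prove by induction on $m$. The case $m=1$ is trivial and $m=2$ is exactly~\cref{eq:H=f''/f'}, which records that $H_{\bi_1^{n-1}}=f''_{\bi_{n-1}^1}/f'_{\bi_{n-1}^1}$. For the inductive step, I differentiate both sides and use Leibniz's rule: the term coming from differentiating $f'_{\bi_{n-1}^1}$ equals $f'_{\bi_{n-1}^1}\cdot H_{\bi_1^{n-1}}\cdot g_{m-1}$, matching the summand $g_k\cdot y_1$ in the recursion; while differentiating $g_{m-1}(H_{\bi_1^{n-1}}^{(m-2)},\ldots,H_{\bi_1^{n-1}})$ under the identification $y_\ell\leftrightarrow H_{\bi_1^{n-1}}^{(\ell-1)}$ (so that $dy_\ell/dx=y_{\ell+1}$) produces exactly $\sum_{\ell=1}^{m-1}\tfrac{\partial g_{m-1}}{\partial y_\ell}\cdot y_{\ell+1}$. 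Adding the two contributions and invoking the defining recursion of $g_m$ closes the induction.

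Substituting this identity into the Faà di Bruno expansion yields
\[
\prod_{B\in\pi} f_{\bi_{n-1}^1}^{(|B|)}(x) = (f'_{\bi_{n-1}^1}(x))^{|\pi|}\prod_{B\in\pi} g_{|B|-1}\bigl(H_{\bi_1^{n-1}}^{(|B|-2)}(x),\ldots,H_{\bi_1^{n-1}}(x)\bigr),
\]
and plugging this back produces exactly the asserted formula for $H_{\bi}^{(k)}$. The only non-routine issue is the bookkeeping required to align the indexing convention in the recursion for $g_m$ with the one produced by repeated differentiation; once the correspondence $y_\ell\leftrightarrow H_{\bi_1^{n-1}}^{(\ell-1)}$ is fixed, the two recursions match term by term and the rest is a careful but mechanical verification. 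The degenerate case $n=1$ (so that $\bi_{n-1}^1$ is the empty word, $f_{\bi_{n-1}^1}=\mathrm{Id}$ and $H_{\bi_1^{n-1}}\equiv 0$) is handled consistently by the convention $g_0\equiv 1$, since all partitions other than the all-singletons one contribute zero.
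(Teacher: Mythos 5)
Your proposal is correct and takes essentially the same route as the paper: you establish the same auxiliary identity (the paper's \cref{eq:fandg}) by the same induction on $m$ (your Leibniz-rule reorganization, differentiating $f'\cdot g_{m-1}$ rather than the quotient $f^{(m)}/f'$, is a cosmetic variant of the paper's step), and you then combine it with \cref{eq:alternativeH} and Faà di Bruno \cref{eq:FaaDiBruno} exactly as the paper does. Your explicit note on the convention $g_0\equiv 1$ and the degenerate $n=1$ case, as well as the term-by-term differentiation justification for infinite words, are useful clarifications that the paper leaves implicit.
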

\begin{proof}
We first show by induction that for any $\bi\in\Sigma_*$,
\begin{equation}
	\label{eq:fandg}
	\frac{f_{\bi_{|\bi|}^1}^{(k)}(x)}{f_{\bi_{|\bi|}^1}'(x)}
	=
	g_{k-1}\big(H_{\bi}^{(k-2)}(x), \dots, H_{\bi}(x)\big).
\end{equation}
Indeed, for $k=2$, \cref{eq:fandg} is the same as~\cref{eq:H=f''/f'}. Differentiating both sides
of~\cref{eq:fandg} we get
\[
\frac{f_{\bi_{|\bi|}^1}^{(k+1)}(x)}{f_{\bi_{|\bi|}^1}'(x)} -
\frac{f_{\bi_{|\bi|}^1}^{(k)}(x)}{f_{\bi_{|\bi|}^1}'(x)}\cdot
\frac{f_{\bi_{|\bi|}^1}''(x)}{f_{\bi_{|\bi|}^1}'(x)}
=\sum_{\ell=0}^{k-2}\frac{\partial g_{k-1}}{\partial
y_{\ell}}\big(H_{\bi}^{(k-2)},\dots,H_{\bi}(x)\big) \cdot H_{\bi}^{(\ell+1)}(x).
\]
We use the induction hypothesis~\cref{eq:fandg} in the second term on the left hand side for $k$
and \cref{eq:H=f''/f'} to see that
\[
\frac{f_{\bi_{|\bi|}^1}^{(k)}(x)}{f_{\bi_{|\bi|}^1}'(x)}\cdot
\frac{f_{\bi_{|\bi|}^1}''(x)}{f_{\bi_{|\bi|}^1}'(x)}
=g_{k-1}(H_{\bi}^{(k-2)}(x),\dots, H_{\bi}(x)) \cdot H_{\bi}(x).
\]
Substituting this back, after rearranging the inductive step is proved for $k+1$: 
\begin{align*}
\frac{f_{\bi_{|\bi|}^1}^{(k+1)}(x)}{f_{\bi_{|\bi|}^1}'(x)}
&=
g_{k-1}(H_{\bi}^{(k-2)}(x),\dots, H_{\bi}(x)) \cdot H_{\bi}(x)
+
\sum_{\ell=0}^{k-2}\frac{\partial g_{k-1}}{\partial
y_{\ell}}\big(H_{\bi}^{(k-2)},\dots,H_{\bi}(x)\big) \cdot H_{\bi}^{(\ell+1)}(x)
\\
&=
g_{k}(H_{\bi}^{(k-1)}(x), \dots, H_{\bi}(x)).
\end{align*}
We can now derive the formula for $H_{\bi}^{(k)}$:
\begin{align*}
H_{\bi}^{(k)}(x)
&\stackrel{\eqref{eq:alternativeH}}{=}\sum_{n=1}^{|\bi|} (\phi_{i_n}\circ f_{\bi_{n-1}^1})^{(k+1)}(x) \\
&\stackrel{\eqref{eq:FaaDiBruno}}{=}\sum_{\pi\in\Pi_{k+1}} \sum_{n=1}^{|\bi|}
\phi_{i_n}^{(|\pi|)}(f_{\bi_{n-1}^1}(x))\cdot
\prod_{B\in\pi} f_{\bi_{n-1}^1}^{(|B|)}(x) \\
&\stackrel{\eqref{eq:fandg}}{=}\sum_{\pi\in\Pi_{k+1}} \sum_{n=1}^{|\bi|}
\phi_{i_n}^{(|\pi|)}(f_{\bi_{n-1}^1}(x))\cdot
(f'_{\bi_{n-1}^1}(x))^{|\pi|} \cdot \prod_{B\in\pi}
g_{|B|-1}\big(H_{\bi_1^{n-1}}^{(|B|-2)}(x),\dots,H_{\bi_1^{n-1}}(x)\big).
\end{align*}
\end{proof}

\begin{lemma}\label{thm:kbound}
For every integer $k\geq 0$ there exists $C_k$ such that for all $x\in I$ and all
$\bi\in\Sigma\cup\Sigma_*$,
\[
\big|H_{\bi}^{(k)}(x)\big|\leq C_k.
\]
\end{lemma}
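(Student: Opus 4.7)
The plan is to induct on $k$, using the explicit formula for $H_{\bi}^{(k)}$ from \cref{thm:ugly} together with the geometric decay of $f'_{\bi_{n-1}^1}(x)$ in $n$. The base case $k=0$ is essentially already done: \cref{thm:H_iAnalytic} yields $\|H_{\bi}\|_\infty \leq K$ for every $\bi\in\Sigma$, and the same estimate extends to finite words by truncating the defining series~\cref{eq:H_i(x)} and using $|f'_{\bi_{n-1}^1}(x)|\leq c_{\max}^{n-1}$ together with the uniform bound on $f''_{i_n}/f'_{i_n}$.

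For the inductive step, suppose $|H_{\bi}^{(j)}(x)|\leq C_j$ uniformly for all $0\leq j<k$. First I would record two auxiliary uniform bounds that hold for the whole family: (i) the functions $\phi_i(x)=\log|f'_i(x)|$ have all derivatives $\phi_i^{(\ell)}$ uniformly bounded on $I$ for $\ell\geq 1$; this follows because $\phi_i'=f_i''/f_i'$, and all higher derivatives of $\phi_i$ are rational expressions in $f_i',f_i'',\ldots$ with $|f_i'|$ bounded away from $0$ on $\overline{\mathcal{B}_\epsilon}$ by \cref{it:c}, so Cauchy's estimates give bounds; (ii) the finitely many polynomials $g_{|B|-1}$ appearing in the formula, evaluated on quantities bounded by $C_0,\ldots,C_{k-1}$, take values bounded by a constant depending only on $k$.

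Then I would simply read off \cref{thm:ugly}. Each summand over $\pi\in\Pi_{k+1}$ and $n$ is a product of a uniformly bounded $\phi_{i_n}^{(|\pi|)}$, the factor $(f'_{\bi_{n-1}^1}(x))^{|\pi|}$, and the product of $g$'s in the derivatives $H_{\bi_1^{n-1}}^{(|B|-2)}$ with $|B|-2\leq k-1$, which is uniformly bounded by the inductive hypothesis combined with (ii). Crucially, in every $\pi$ we have $|\pi|\geq 1$, and the chain rule bound gives
\[
\bigl|(f'_{\bi_{n-1}^1}(x))^{|\pi|}\bigr| \;\leq\; c_{\max}^{(n-1)|\pi|} \;\leq\; c_{\max}^{n-1},
\]
with the convention $c_{\max}^{0}=1$ for the harmless $n=1$ term. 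Summing the resulting geometric series in $n$ and then over the finite set $\Pi_{k+1}$ produces the desired uniform bound
\[
|H_{\bi}^{(k)}(x)| \;\leq\; \frac{1}{1-c_{\max}}\sum_{\pi\in\Pi_{k+1}} D_{|\pi|}\cdot M_\pi \;=:\; C_k,
\]
independent of $\bi\in\Sigma\cup\Sigma_*$ and $x\in I$.

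The only genuine point of care is verifying (i) rigorously, since $\phi_i=\log|f_i'|$ is not itself complex analytic, but this is routine: on $\overline{\mathcal B_\epsilon}$ one may choose an analytic branch of $\log f_i'$ (as $f_i'$ is analytic and non-vanishing) whose real part equals $\phi_i$ on $I$, so $\phi_i$ is real-analytic on $I$ with uniformly bounded derivatives. Everything else is bookkeeping in \cref{thm:ugly}, so I do not expect any real obstacle.
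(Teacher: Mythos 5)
Your proof is correct and essentially identical to the paper's: both induct on $k$, both bound the sum in \cref{thm:ugly} by pulling out a uniform bound on the $\phi_{i_n}^{(|\pi|)}$ (your item (i), the paper's $D_k$), a uniform bound on the $g_{|B|-1}$ evaluated at inductively bounded arguments (your item (ii), the paper's $E_k$), and summing the geometric series coming from $(f'_{\bi_{n-1}^1}(x))^{|\pi|}\leq c_{\max}^{(n-1)|\pi|}$. Your explicit remark on choosing an analytic branch of $\log f_i'$ to control the derivatives of $\phi_i$ is a useful clarification that the paper leaves implicit.
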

\begin{proof}
We proceed by induction. Let
\[
D_k:=\max_{i\in\Sigma_1}\max_{x\in[0,1]} |\phi_i^{(k)}(x)|.
\]
From~\cref{eq:alternativeH} we see that
$|H_{\bi}(x)| \leq D_1/(1-c_{\max})=:C_0$.
Suppose that the statement is true for $k$ and define
\begin{equation}\label{eq:E_k}
E_k:= \sup_{\substack{y_{j+1}\in[-C_j,C_j]\\0\,\leq\, j \,\leq\, k-1}}g_k(y_k,\dots,y_1).
\end{equation}
By \cref{thm:ugly},
\[
\big|H_{\bi}^{(k)}(x)\big| \leq \sum_{\pi\in\Pi_{k+1}}\sum_{n=1}^{|\bi|} D_{|\pi|} \cdot
c_{\max}^{(n-1)|\pi|} \cdot \prod_{B\in\pi}
E_{|B|-1}
\leq \sum_{\pi\in\Pi_{k+1}} \frac{D_{|\pi|}\cdot \prod_{B\in\pi}E_{|B|-1}}{1-c_{\max}^{|\pi|}}
\]
and the statement follows.
\end{proof}
\begin{corollary}
\label{thm:difcor}
For all $k\geq 1$, for all $x,y\in I$ and for all $\bi\in\Sigma\cup\Sigma_*$,
\[
\big|H_{\bi}^{(k)}(x) - H_{\bi}^{(k)}(y)\big| \leq C_{k+1}\cdot |x-y|,
\]
where $C_k>0$ are as defined in \cref{thm:kbound}. 
\end{corollary}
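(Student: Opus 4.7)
The plan is to obtain this as a direct consequence of the uniform bound on one more derivative established in \cref{thm:kbound}. Fix $\bi \in \Sigma\cup\Sigma_*$ and $k\geq 1$. From the definition of the class $\mathcal{S}^\omega_\epsilon(I)$ (properties \cref{it:a,it:b,it:c}) and from \cref{thm:H_iAnalytic}, the function $H_{\bi}$ is complex analytic on $\mathcal{B}_\epsilon$ and, since all the building blocks $f_{i_n}'$, $f_{i_n}''$, and $f_{\bi_{n-1}^1}$ are real-valued on $I$, the restriction $H_{\bi}\restriction_I$ takes values in $\mathbb{R}$. In particular $H_{\bi}^{(k)}$ is a real-valued $C^1$ function of a real variable on the closed interval $I=[0,1]$, and its derivative equals $H_{\bi}^{(k+1)}$.

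Assume without loss of generality that $x\le y$ in $I$. The Mean Value Theorem applied to the real function $H_{\bi}^{(k)}$ on $[x,y]$ yields some $\xi \in [x,y]\subseteq I$ such that
\[
H_{\bi}^{(k)}(x) - H_{\bi}^{(k)}(y) = H_{\bi}^{(k+1)}(\xi)\cdot(x-y).
\]
Taking absolute values and invoking \cref{thm:kbound} with index $k+1$ (which gives the uniform bound $|H_{\bi}^{(k+1)}(\xi)|\le C_{k+1}$ for all $\xi\in I$ and all $\bi$) produces
\[
\bigl|H_{\bi}^{(k)}(x) - H_{\bi}^{(k)}(y)\bigr| \le C_{k+1}\cdot|x-y|,
\]
as claimed. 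There is no genuine obstacle here; the only thing to verify carefully is that the constant $C_{k+1}$ produced by \cref{thm:kbound} is independent of $\bi$ (including the case of infinite words), which is exactly the content of that lemma.
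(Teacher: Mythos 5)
Your proof is correct and is exactly the argument the paper leaves implicit: the bound $|H_{\bi}^{(k+1)}|\le C_{k+1}$ from \cref{thm:kbound}, combined with the Mean Value Theorem applied to the real analytic (hence $C^1$) function $H_{\bi}^{(k)}$ on $I$, immediately gives the Lipschitz estimate. You are also right that the uniformity of $C_{k+1}$ over all $\bi\in\Sigma\cup\Sigma_*$ is the only point worth flagging, and that is exactly what \cref{thm:kbound} supplies.
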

We show the following useful H\"older type bound. 
\begin{lemma}\label{thm:difbound}
For all integers $k\geq 0$, $x\in I$ and $\bi,\bj\in\Sigma\cup\Sigma_*$ with
$|\bi\wedge\bj|<\min\{|\bi|,|\bj|\}$,
\[
\big|H_{\bi}^{(k)}(x) - H_{\bj}^{(k)}(x)\big| \leq 2C_k\cdot c_{\max}^{|\bi\wedge\bj|},
\]
where the $C_k>0$ are as defined in \cref{thm:kbound}. 
\end{lemma}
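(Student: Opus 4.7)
The plan is to exploit the explicit formula from Lemma~\ref{thm:ugly} and compare the representations of $H_{\bi}^{(k)}(x)$ and $H_{\bj}^{(k)}(x)$ termwise. Let $m=|\bi\wedge\bj|$, so that $i_n=j_n$ for $1\le n\le m$, while the hypothesis $m<\min\{|\bi|,|\bj|\}$ guarantees that each of $\bi$ and $\bj$ has an $(m+1)$-th letter (and these differ).

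The crucial observation is that the $n$-th summand in the formula from Lemma~\ref{thm:ugly}, namely
\[
\phi_{i_n}^{(|\pi|)}(f_{\bi_{n-1}^1}(x))\cdot (f'_{\bi_{n-1}^1}(x))^{|\pi|}\cdot \prod_{B\in\pi} g_{|B|-1}\big(H_{\bi_1^{n-1}}^{(|B|-2)}(x),\dots,H_{\bi_1^{n-1}}(x)\big),
\]
depends only on the letters $i_1,\ldots,i_n$ (both of the index subwords $\bi_{n-1}^1$ and $\bi_1^{n-1}$ are just $(i_1,\dots,i_{n-1})$ in some order). Hence, for every $n\le m$ the corresponding summands in $H_{\bi}^{(k)}(x)$ and $H_{\bj}^{(k)}(x)$ coincide and cancel in the difference. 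This termwise cancellation is the one step that needs the most care, because of the multiple index conventions at play, but it is a routine verification.

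What remains is to bound the two tails over $n>m$, one for each word. I would reuse the very same pointwise estimates that already appear in the proof of Lemma~\ref{thm:kbound}: $|\phi_{i_n}^{(|\pi|)}|\le D_{|\pi|}$, $|(f'_{\bi_{n-1}^1})^{|\pi|}|\le c_{\max}^{(n-1)|\pi|}$, and $|g_{|B|-1}(H_{\bi_1^{n-1}}^{(|B|-2)},\dots,H_{\bi_1^{n-1}})|\le E_{|B|-1}$, where $E_{|B|-1}$ is defined in \cref{eq:E_k}. Each tail is then controlled by
\[
\sum_{\pi\in\Pi_{k+1}} D_{|\pi|}\prod_{B\in\pi}E_{|B|-1} \sum_{n=m+1}^{\infty} c_{\max}^{(n-1)|\pi|}
= \sum_{\pi\in\Pi_{k+1}} \frac{D_{|\pi|}\prod_{B\in\pi}E_{|B|-1}}{1-c_{\max}^{|\pi|}}\cdot c_{\max}^{m|\pi|}.
\]
Since every $\pi\in\Pi_{k+1}$ has at least one block, $|\pi|\ge 1$, so $c_{\max}^{m|\pi|}\le c_{\max}^m$, and the displayed quantity is at most $C_k\cdot c_{\max}^m$, where $C_k$ is exactly the constant from Lemma~\ref{thm:kbound}. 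Applying this bound to both tails and invoking the triangle inequality yields $|H_{\bi}^{(k)}(x)-H_{\bj}^{(k)}(x)|\le 2C_k\cdot c_{\max}^m$, as required.

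There is no real analytic obstacle: all nontrivial estimates have already been done in the proof of Lemma~\ref{thm:kbound}, and what this argument adds is merely the observation that truncating the geometric series at $n=m+1$ produces the desired factor $c_{\max}^m$ for free. The only point demanding attention is the termwise cancellation for $n\le m$, which is a matter of careful index-tracking.
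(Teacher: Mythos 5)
Your proposal is correct and follows essentially the same route as the paper: invoke the formula of Lemma~\ref{thm:ugly}, observe that the summands for $n\le|\bi\wedge\bj|$ cancel because they depend only on $i_1,\dots,i_n$, and bound each of the two remaining tails by $C_k\cdot c_{\max}^{|\bi\wedge\bj|}$ using the same pointwise estimates as in Lemma~\ref{thm:kbound}. (Your exponent $c_{\max}^{(n-1)|\pi|}$ is in fact the correct one; the paper writes $c_{\max}^{n|\pi|}$ at this point, a harmless slip since it is smaller.)
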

\begin{proof}
  Let $m = |\bi\wedge\bj|$. Again, by \cref{thm:ugly,thm:kbound},
  \begin{align*}
	&|H_{\bi}^{(k)}(x) - H_{\bj}^{(k)}(x)|
	\\
	&=
	\left|
	\sum_{\pi\in\Pi_{k+1}} \sum_{n=m+1}^{|\bi|} \phi_{i_n}^{(|\pi|)}(f_{\bi_{n-1}^1}(x))\cdot 
	(f'_{\bi_{n-1}^1}(x))^{|\pi|} \cdot
	\prod_{B\in\pi} g_{|B|-1}\big(H_{\bi_1^{n-1}}^{(|B|-2)}(x),\dots,H_{\bi_1^{n-1}}(x)\big)
      \right.
      \\
	&
	\left.\;\; -
	  \sum_{\pi\in\Pi_{k+1}} \sum_{n=m+1}^{|\bj|} \phi_{j_n}^{(|\pi|)}(f_{\bj_{n-1}^1}(x))\cdot
	  (f'_{\bj_{n-1}^1}(x))^{|\pi|} \cdot
	  \prod_{B\in\pi} g_{|B|-1}\big(H_{\bj_1^{n-1}}^{(|B|-2)}(x),\dots,H_{\bj_1^{n-1}}(x)\big)
	  \right|\\
	&\leq
	\sum_{\pi\in\Pi_{k+1}}\sum_{n=m+1}^\infty 2D_{|\pi|}\cdot c_{\max}^{n|\pi|} \cdot
	\prod_{B\in\pi} E_{|B|-1}
	\leq 2 C_{k}\cdot c_{\max}^m.
	\qedhere
  \end{align*}
\end{proof}

%%%%%%%%%%%%%%%%%%%%%%%%%%%%%%%%%%%%%%%%%%%%%%%%%%%%%%%%%%%%%%%%%%%%%%%%%%%
\section{Proof of the sufficient condition for SESC}\label{sec:ProofSufficientCond}

We need one auxiliary lemma before we can proceed with the proof of~\cref{thm:main}.

\begin{lemma}\label{thm:analyticity}
Let $f$ and $g$ be real analytic maps on $J$
and let $\eta>0$ with $2\sqrt{\eta}<|J|$.
Denote
 \[
   Q \coloneqq \max\left\{\sup_{x\in J} |f''(x)|,\, \sup_{x\in J}|g''(x)|\right\}.
 \]
If $\sup_{x\in J} |f(x)-g(x)| \leq \eta$, then $\sup_{x\in J} |f'(x)-g'(x)|\leq (2+Q)\sqrt{\eta}$.
\end{lemma}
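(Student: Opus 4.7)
The plan is to reduce to the difference $h := f - g$, which is real analytic on $J$ with $\sup_{x\in J}|h(x)|\le \eta$ and $\sup_{x\in J}|h''(x)|\le 2Q$, and then use a second-order Taylor expansion to trade the closeness of $f$ and $g$ for closeness of their derivatives. The statement to prove becomes: $\sup_{x\in J}|h'(x)|\le (2+Q)\sqrt{\eta}$.

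First, I would fix an arbitrary $x_0\in J$. Because $|J|>2\sqrt{\eta}$, at least one of $x_0+\sqrt{\eta}$ or $x_0-\sqrt{\eta}$ lies in $J$; call that point $x_1$, so that $|x_1-x_0|=\sqrt{\eta}$. By Taylor's theorem with the Lagrange remainder, there exists some $\xi$ between $x_0$ and $x_1$ with
\[
h(x_1) \;=\; h(x_0) + h'(x_0)\,(x_1-x_0) + \tfrac{1}{2}h''(\xi)\,(x_1-x_0)^2.
\]

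Solving for $h'(x_0)(x_1-x_0)$ and applying the triangle inequality together with the bounds $|h|\le \eta$ and $|h''|\le 2Q$ on $J$,
\[
|h'(x_0)|\cdot \sqrt{\eta} \;\le\; |h(x_1)| + |h(x_0)| + \tfrac{1}{2}\,|h''(\xi)|\,\eta \;\le\; 2\eta + Q\,\eta.
\]
Dividing through by $\sqrt{\eta}$ yields $|h'(x_0)|\le (2+Q)\sqrt{\eta}$, and since $x_0\in J$ was arbitrary, taking the supremum gives the desired conclusion.

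There is really no significant obstacle here beyond checking the geometric condition: the hypothesis $2\sqrt{\eta}<|J|$ is exactly what is needed to guarantee that from any $x_0\in J$ one can step a distance $\sqrt{\eta}$ in at least one direction while staying inside $J$, so that the Taylor expansion can be evaluated at a point still in $J$ where the bounds on $|h|$ and $|h''|$ are valid. The optimal step length $\sqrt{\eta}$ arises from balancing the two contributions $2\eta/|x_1-x_0|$ and $Q|x_1-x_0|$ in the estimate. Note also that although the lemma is stated for real analytic maps, the proof uses only $C^2$ regularity together with the stated bound on $f''$ and $g''$; analyticity is not essential.
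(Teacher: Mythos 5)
Your proof is correct and follows essentially the same route as the paper: pick a point $y$ at distance $\sqrt{\eta}$ inside $J$, expand to second order with Lagrange remainder, and balance the two error contributions. The only cosmetic difference is that you apply Taylor's theorem once to the difference $h=f-g$ (yielding a single $\xi$) whereas the paper expands $f$ and $g$ separately (yielding $\xi_1,\xi_2$); both immediately give the same bound $(2+Q)\sqrt{\eta}$.
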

\begin{proof}
Let $x\in J$ be arbitrary and take $y\in J$ such that $|x-y|=\sqrt{\eta}$. By assumption
$\max\{|f(x)-g(x)|,|f(y)-g(y)|\}\leq \eta$. Using the second order Taylor approximation 
\[
  f(y) = f(x) + f'(x)(y-x)+ \frac{f''(\xi_1)}{2}(y-x)^2,
\]
where $\xi_1\in(x,y)$, and similarly for $g(y)$ around $x$ we get
\begin{align*}
  \eta &\geq |f(y)-g(y)| =
  \left|f(x)-g(x)+(f'(x)-g'(x))(y-x)+(f''(\xi_1)-g''(\xi_2))\frac{(y-x)^2}{2}\right|\\
 &\geq |f'(x)-g'(x)|\cdot|y-x|-|f(x)-g(x)|-(|f''(\xi_1)|+|g''(\xi_2)|)\frac{(y-x)^2}{2}.
\end{align*}
  Thus,
  \[
    |f'(x)-g'(x)| \leq \frac{\eta+\eta+Q \eta}{\sqrt{\eta}} = (2+Q)\sqrt{\eta}
  \]
  as required.
\end{proof}

\begin{proof}[Proof of~\cref{thm:main}]
We prove the theorem by contradiction. Recall~\cref{def:SESC} and suppose that $\Phi$ has
weak super-exponential condensation, \emph{i.e.} there exists a sequence $(\eta_n)_n$ such that
$\log(\eta_n)/n\to-\infty$ and there exist a subsequence $n_{\ell}\in\bbN$ and
$\bi\neq\bj\in\Sigma_{n_\ell}$ such that
\begin{equation}\label{eq:ExpCondensation}
  \sup_{x\in[0,1]}|f_{\bi}(x)-f_{\bj}(x)| \leq \eta_{n_\ell}.
\end{equation}
We need to show that there exist $\bi^*\neq\bj^*\in\Sigma\cup\Sigma_*$ with $|\bi^*|=|\bj^*|$ for
which $H_{\bi^*}(x)\equiv
H_{\bj^*}(x)$ for all $x\in[0,1]$, thus contradicting our main assumption. For the remainder of the proof
we work with the sequence $\eta_{n_{\ell}}$ and $\bi\neq\bj\in\Sigma_{n_\ell}$ provided
by~\cref{eq:ExpCondensation}. Let $m=m(n_{\ell})\coloneqq\max\{k\leq n_{\ell}:\, i_k\neq j_k\}$ and
$\bu^{(n_{\ell})}\coloneqq \bi_{m+1}^{n_{\ell}}\in\Sigma_{n_\ell-m}$, then
$\bj=\bj_1^m\bu^{(n_{\ell})}$. Let us also denote $\bi^{(n_{\ell})}\coloneqq\bi_m^1\in\Sigma_m$ and
$\bj^{(n_{\ell})}\coloneqq\bj_m^1\in\Sigma_m$, so $(\bi^{(n_{\ell})})_1\neq (\bj^{(n_{\ell})})_1$.
We first show that there exists a sequence $\eta_{n_{\ell}}''$ with
$\log(\eta_{n_{\ell}}'')/n_{\ell}\to-\infty$ such that for every $x\in I$,
\begin{equation}\label{eq:DiffH_iSUperExp}
|H_{\bi^{(n_\ell)}}(f_{\bu^{(n_\ell)}}(x)) - H_{\bj^{(n_\ell)}}(f_{\bu^{(n_\ell)}}(x))| \leq \eta''_{n_\ell}.
\end{equation}

For any $\bi\in\Sigma_*$, recall from~\cref{eq:H=f''/f'} that
\begin{equation*}
	H_{\bi}(x)= \frac{f''_{\bi_{|\bi|}^1}(x)}{f'_{\bi_{|\bi|}^1}(x)} \;\;\text{ or equivalently,
	}\;\; H_{\bi_{|\bi|}^1}(x) = \frac{f''_{\bi}(x)}{f'_{\bi}(x)}.
\end{equation*}
Using~\eqref{eq:H=f''/f'}, we get 
\begin{equation*}
 |f_{\bi}''(x)| = \big|H_{\bi_{|\bi|}^1}(x)\cdot f_{\bi}'(x)\big|\leq C_0 \cdot c_{\max}^{|\bi|}
\end{equation*}
by \cref{thm:kbound}. Moreover,
\begin{equation*}
  |f_{\bi}'''(x)| = \big|g_2(H_{\bi_{|\bi|}^1}'(x),H_{\bi_{|\bi|}^1}(x))\cdot f_{\bi}'(x)\big| \leq
  E_2\cdot c_{\max}^{|\bi|}
\end{equation*}
by the definition of $E_k$ in~\cref{eq:E_k}. Since $\eta_{n_\ell}\to 0$ super-exponentially, we may
assume $\eta_{n_\ell}<1/4$ for all $\ell$. Together with~\cref{thm:analyticity} these bounds imply
that for the particular choice of $\bi,\bj$ in~\cref{eq:ExpCondensation} we have
\begin{align*}
 \sup_{x\in[0,1]}|f_{\bi}'(x) - f_{\bj}'(x)| &\leq (2+C_0 c_{\max}^{n_\ell})\sqrt{\eta_{n_\ell}}; \\
 \sup_{x\in[0,1]}|f_{\bi}''(x) - f_{\bj}''(x)| &\leq (2+E_2c_{\max}^{n_\ell})\sqrt{2+C_0
 	c_{\max}^{n_\ell}}\cdot
 \eta_{n_\ell}^{1/4}.
\end{align*}
We deduce
\begin{align*}
	\left|\frac{f_{\bi}''(x)}{f_{\bi}'(x)} - \frac{f_{\bj}''(x)}{f_{\bj}'(x)}\right|
	&\leq
	\frac{|f_{\bi}''(x)|}{|f_{\bi}'(x)||f_{\bj}'(x)|}\cdot|f_{\bi}'(x) - f_{\bj}'(x)|
	+\frac{1}{|f'_{\bj}(x)|} \cdot |f_{\bi}''(x) - f_{\bj}''(x)|
	\\
	&
	\leq \frac{C_0}{c_{\min}^{n_\ell}}(2+C_0c_{\max}^{n_\ell})\cdot \eta_{n_\ell}^{1/2}
	+\frac{1}{c_{\min}^{n_\ell}}(2+E_2
	c_{\max}^{n_\ell})\sqrt{2+C_0 c_{\max}^{n_\ell}} \cdot \eta_{n_\ell}^{1/4} \;=:\;\eta_{n_\ell}'.
\end{align*}
Now observe that
\begin{equation*}
H_{\bi_{n_{\ell}}^1}(x) = \frac{(f_{\bi_1^m}\circ f_{\bu^{(n_{\ell})}})''(x)}{(f_{\bi_1^m}\circ
f_{\bu^{(n_{\ell})}})'(x)} = 
f'_{\bu^{(n_{\ell})}}(x) \cdot H_{\bi^{(n_{\ell})}} \big(f_{\bu^{(n_{\ell})}}(x)\big) + \frac{
f''_{\bu^{(n_{\ell})}}(x) }{ f'_{\bu^{(n_{\ell})}}(x)},
\end{equation*}
hence, $H_{\bi_{n_{\ell}}^1}(x)-H_{\bj_{n_{\ell}}^1}(x) = f'_{\bu^{(n_{\ell})}}(x) \cdot \big(
H_{\bi^{(n_{\ell})}} \big(f_{\bu^{(n_{\ell})}}(x)\big) -H_{\bj^{(n_{\ell})}}
\big(f_{\bu^{(n_{\ell})}}(x)\big) \big)$, so we can conclude
\[
\big| H_{\bi^{(n_{\ell})}} \big(f_{\bu^{(n_{\ell})}}(x)\big) -H_{\bj^{(n_{\ell})}}
\big(f_{\bu^{(n_{\ell})}}(x)\big) \big|
 \leq
 \frac{\big|H_{\bi_{n_{\ell}}^1}(x)-H_{\bj_{n_{\ell}}^1}(x)\big|}{\big|f'_{\bu^{(n_{\ell})}}(x)\big|}
 \leq c_{\min}^{-{n_\ell}}
\cdot \eta_{n_\ell}'
=:\eta_{n_\ell}''.
\]

Having established~\cref{eq:DiffH_iSUperExp}, there are two cases to consider: whether
$|\bu^{(n_\ell)}| \to \infty$ or there exists a constant $C>0$ and infinitely many $\ell$ such that
$|\bu^{(n_\ell)}| \leq C$. Let us first assume the latter. Since $|\bi^{(n_\ell)}|+ |\bu^{(n_\ell)}|
= |\bj^{(n_\ell)}|+|\bu^{(n_\ell)}| = n_\ell$ we conclude, by compactness, that
there exists a subsequence $n_\ell'$ such that $\bi^{(n_\ell')} \to \bi^*\in\Sigma$,
$\bj^{(n_\ell')}\to\bj^*\in\Sigma$ and $\bu^{(n_\ell')}=\bu^*\in\Sigma_*$ with $i_1^*\neq j_1^*$.
It follows from~\cref{thm:difbound} and~\cref{eq:DiffH_iSUperExp} that
$
  H_{\bi^*}(f_{\bu^*}(x))\equiv H_{\bj^*}(f_{\bu^*}(x))
$
for all $x\in I$.  Hence, using the analyticity of $H_{\bi}$ from~\cref{thm:H_iAnalytic} we conclude
that $H_{\bi^*}(x) \equiv H_{\bj^*}(x)$ for all $x\in I$ which contradicts the main assumption.

Now let us assume that $|\bu^{(n_\ell)}| \to \infty$. Again by compactness, there exists
$\bu^*\in\Sigma$ as well as $\bi^*,\bj^*\in\Sigma\cup\Sigma_*$  and a
subsequence $n_\ell'$ such that $\bi^{(n_\ell')}\to \bi^*$ and $\bj^{(n_\ell')}\to \bj^*$ with
$i_1^*\neq j_1^*$ as well as $f_{\bu^{(n_\ell')}}(x) \to \pi(\bu^*)$ for all $x\in[0,1]$. Note that
both $|\bi^*|$ and
$|\bj^*|$ might be finite or infinite, however, $|\bi^*|=|\bj^*|$ by the construction. Combining
\cref{thm:analyticity} and \cref{thm:kbound} with~\cref{eq:DiffH_iSUperExp}, we deduce that for all
$k$ there
exists $\widetilde{C}_k>0$ such that for all $\ell\geq 1$ we have
\begin{equation}\label{eq:H_i^(k)Diff}
  \big|H_{\bi^{(n_\ell)}}^{(k)}(f_{\bu^{(n_\ell)}}(x)) - H_{\bj^{(n_\ell)}}^{(k)}(f_{\bu^{(n_\ell)}}(x))\big|
  \leq \widetilde{C}_k \cdot
  \frac{1}{(f'_{\bu^{(n_\ell)}}(x))^k}\cdot\left(\eta_{n_\ell}''\right)^{2^{-k}}\leq\widetilde{C}_k
  \cdot \frac{1}{c_{\min}^{k n_\ell}}\cdot\left(\eta_{n_\ell}''\right)^{2^{-k}}
\end{equation}
for all $x\in[0,1]$ which still tends to $0$ as $\ell\to\infty$ since $\eta_{n_{\ell}}''\to 0$
super-exponentially fast. Combining~\cref{thm:difcor} and~\cref{thm:difbound}
with~\cref{eq:H_i^(k)Diff} we see that $H_{\bi^*}^{(k)}(\pi(\bu^*))=H_{\bj^*}^{(k)}(\pi(\bu^*))$ for
all $k$.
Since $H_{\bi^*}$ and $H_{\bj^*}$ are analytic by~\cref{thm:H_iAnalytic}, we get 
$H_{\bi^*}(x)\equiv H_{\bj^*}(x)$ for all
$x\in[0,1]$ which again contradicts our main assumption, concluding the proof of~\cref{thm:main}. 
\end{proof}

\begin{proof}[Proof of \cref{thm:DualSSC}]
	The proof follows by \cref{lem:SSCEequiv} and \cref{thm:main}.
\end{proof}

%%%%%%%%%%%%%%%%%%%%%%%%%%%%%%%%%%%%%%%%%%%%%%%%%%%%%%%%%%%%%%%%%%%%%%
\section{Existence of open and dense set of IFSs with SESC}\label{sec:ProofESCOpenDense}

%%%%%%%%%%%%%%%%%%%%%%%%%%%%%%%%%%%%%%%%%%%%%%%%%%%%%%%%%%%%%%%%%%%%%%
\subsection{Preliminaries}

Fix an arbitrary $n\geq 1$. Let $\mathcal{B}_n\coloneqq\{(\bi,\bj)\in\Sigma_n\times\Sigma_n:\,
i_1<j_1\}$. We say that $(\bi,\bj)\in\mathcal{B}_n$ is bad if the associated cylinders overlap,
\begin{equation}\label{eq:DefBad}
F_{\bi}(k,K)\cap F_{\bj}(k,K)\neq\varnothing,
\end{equation}
recall~\cref{eq:DisjointCylinders}. For any $\bi\in\Sigma_*$ and $x\in[0,1]$ define the orbit of
$\bi$ starting from $x$ as the multiset $\mathcal{O}_{\bi}(x)\coloneqq \{x,f_{i_1}(x),
f_{\bi_2^1}(x),\ldots,f_{\bi_{|\bi|}^1}(x)\}$.

\begin{lemma}\label{lem:Pointsx_ij}
If the IFS $(f_i)_{i\in\mathcal{I}}\in\mathfrak{S}_N$ has no exact overlaps, then there exists a collection of points
$\{x_{\bi,\bj}\}_{(\bi,\bj)\in\mathcal{B}_n}\subseteq[0,1]$ such that
\begin{enumerate}[label=(\alph*)]
  \item\label{it:lla} $\mathrm{conv}\big((F_{\bi}k)(x_{\bi,\bj}), (F_{\bi}K)(x_{\bi,\bj})\big) \cap
  \mathrm{conv}\big((F_{\bj}k)(x_{\bi,\bj}),
	(F_{\bj}K)(x_{\bi,\bj})\big)= \varnothing$ if $(\bi,\bj)$ is not bad;  
      \item\label{it:llb} all points in $\mathcal{O}_{\bi}(x_{\bi,\bj})$ and also in $\mathcal{O}_{\bj}(x_{\bi,\bj})$
  are distinct, moreover,
  $\mathcal{O}_{\bi}(x_{\bi,\bj})\cap\mathcal{O}_{\bj}(x_{\bi,\bj})=\{x_{\bi,\bj}\}$;
\item\label{it:llc} $\big(\mathcal{O}_{\bi}(x_{\bi,\bj})\cup \mathcal{O}_{\bj}(x_{\bi,\bj})\big) \cap
  \big(\mathcal{O}_{\bh}(x_{\bh,\bk})\cup
  \mathcal{O}_{\bk}\,(x_{\bh,\bk})\big)=\varnothing$ for every $(\bi,\bj)\neq
  (\bk,\bh)\in\mathcal{B}_n$.
\end{enumerate} 
\end{lemma}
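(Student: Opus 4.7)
The plan is to construct the points $x_{\bi,\bj}$ sequentially, by interpreting each of (a), (b), and (c) as either an open non-empty condition on $x_{\bi,\bj}$ or as the requirement that $x_{\bi,\bj}$ avoid a finite exceptional set. The entire argument rests on a single consequence of the no-exact-overlaps hypothesis.

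First I would establish that, combined with the analyticity of the $f_i$, no exact overlaps forces any two distinct finite words $\bv \neq \bw \in \Sigma_*$ to give distinct analytic compositions $f_{\bv}, f_{\bw}$ on $[0,1]$. Indeed, $\Lambda$ cannot be a singleton (otherwise every distinct pair of words would trivially coincide on the one point, which is an exact overlap), so $\Lambda$ has an accumulation point in $[0,1]$, and agreement of $f_{\bv}$ with $f_{\bw}$ on $\Lambda$ would by the identity theorem force agreement on all of $[0,1]$, contradicting no exact overlaps. As a consequence, $\{x \in [0,1] : f_{\bv}(x) = f_{\bw}(x)\}$ is the zero set of a nonzero real-analytic function on a compact interval, hence finite. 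Moreover, by property \cref{it:c}, $f_{\bv}'$ has constant sign on $[0,1]$, so each $f_{\bv}$ is strictly monotone, whence $f_{\bv}^{-1}(c)$ has at most one element for any $c\in\bbR$.

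Next I interpret the three conditions. Condition \cref{it:lla}, for a non-bad pair, defines a non-empty open subset $U_{\bi,\bj} \subseteq [0,1]$: non-empty by the definition of ``not bad'' (recall \cref{eq:DisjointCylinders}, \cref{eq:DefBad}) and open by continuity of the maps $x\mapsto(F_{\bi}k)(x)$ and $x\mapsto(F_{\bi}K)(x)$. Condition \cref{it:llb} requires the values $f_{\bi_s^1}(x)$ and $f_{\bj_t^1}(x)$ for $s,t\in\{0,\dots,n\}$ to be distinct except for the permitted coincidence $f_{\bi_0^1}(x) = x = f_{\bj_0^1}(x)$. In each non-permitted pair, the two words involved are distinct (using $i_1 < j_1$ together with length comparisons among the various prefixes), so by the key observation each such coincidence rules out only finitely many $x\in[0,1]$. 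Condition \cref{it:llc} requires the orbit values $f_{\bi_s^1}(x_{\bi,\bj})$ and $f_{\bj_t^1}(x_{\bi,\bj})$ to avoid the finitely many orbit points arising from previously chosen pairs; by strict monotonicity each such constraint excludes at most one $x$, and only finitely many constraints appear.

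Enumerating $\mathcal{B}_n$ arbitrarily and proceeding one pair at a time, at the $m$-th step the set of admissible $x_{\bi,\bj}$ is either $U_{\bi,\bj}$ (or all of $[0,1]$ for bad pairs, where \cref{it:lla} is vacuous) with a finite exceptional set from \cref{it:llb} and from \cref{it:llc} relative to the already-chosen points removed. This residual set is a non-empty open set, and any element serves as $x_{\bi,\bj}$. The principal technical point is the bookkeeping that, in every coincidence condition appearing in \cref{it:llb} and \cref{it:llc}, the two words compared are genuinely distinct so that the key observation applies; once this is verified, the sequential construction goes through without further obstacle.
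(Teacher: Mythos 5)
Your proposal is correct and follows essentially the same strategy as the paper's proof: fix an enumeration of $\mathcal{B}_n$, observe that no exact overlaps together with real analyticity makes each coincidence set $\{x : f_{\bv}(x)=f_{\bw}(x)\}$ finite for distinct finite words $\bv\neq\bw$, and choose the points $x_{\bi,\bj}$ one at a time from the (non-empty open) set prescribed by \cref{it:lla}, after removing the finitely many exceptional points required to enforce \cref{it:llb} and \cref{it:llc}. The only cosmetic difference is that your digression through ``$\Lambda$ is not a singleton'' and the identity theorem on $\Lambda$ is unnecessary: if $f_{\bv}\equiv f_{\bw}$ on $[0,1]$ then in particular they agree on $\Lambda$, which is already an exact overlap, so the finiteness of the zero set follows directly from analyticity without detouring through accumulation points of $\Lambda$.
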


\begin{proof}
We first establish an order on the elements of $\mathcal{B}_n$ by setting
\[
  (\bi^{(r)},\bj^{(r)})=(i_1^{(r)},\ldots,
i_n^{(r)};j_1^{(r)},\ldots,j_n^{(r)}),
\]
where $r=1,\ldots,\#\mathcal{B}_n$. The points are constructed
inductively. If $(\bi^{(1)},\bj^{(1)})$ is not bad then by definition there exists an $\hat
x_{(\bi^{(1)},\bj^{(1)})}$ for which \cref{it:lla} holds. Since \cref{it:lla} is an open condition, there exists an
$x_{(\bi^{(1)},\bj^{(1)})}$ in the neighborhood of $\hat x_{(\bi^{(1)},\bj^{(1)})}$ for which \cref{it:llb}
also holds. If this were not the case, then there would be $k,\ell$ such that
$f_{(\bi^{(1)})_{\ell}^1}(x)=f_{(\bi^{(1)})_{k}^1}(x)$ for infinitely many $x$, but then analyticity
implies that $f_{(\bi^{(1)})_{\ell}^1}(x)\equiv f_{(\bi^{(1)})_{k}^1}(x)$ on $[0,1]$, which contradicts the no
exact overlaps assumption. If $(\bi^{(1)},\bj^{(1)})$ is bad, then choose
$x_{(\bi^{(1)},\bj^{(1)})}$ to satisfy \cref{it:llb} (which is possible by a similar argument). Thus we have
constructed the first point $x_{(\bi^{(1)},\bj^{(1)})}$. Condition \cref{it:llc} trivially holds with just
the single pair $(\bi^{(1)},\bj^{(1)})$.

We continue by induction. Assume that $x_{(\bi^{(1)},\bj^{(1)})},\ldots,x_{(\bi^{(r)},\bj^{(r)})}$
have already been constructed (for some $r\geq 1$) so that \cref{it:lla,it:llb,it:llc} all hold. The set
$\bigcup_{m=1}^r \widehat{\mathcal{O}}(x_{(\bi^{(m)},\bj^{(m)})})$ is finite, where we use the
shorthand  $\widehat{\mathcal{O}}(x_{\bi,\bj})\coloneqq\big(\mathcal{O}_{\bi}(x_{\bi,\bj})\cup
\mathcal{O}_{\bj}(x_{\bi,\bj})\big)$. Then the set
\begin{equation*}
A_r\coloneqq \bigcup_{\ell=0}^n \bigg( \big(f_{(\bi^{(r+1)})_{\ell}^1}\big)^{-1} \Big(
\bigcup_{m=1}^r \widehat{\mathcal{O}}(x_{(\bi^{(m)},\bj^{(m)})}) \Big) \cup
\big(f_{(\bj^{(r+1)})_{\ell}^1}\big)^{-1} \Big( \bigcup_{m=1}^r
\widehat{\mathcal{O}}(x_{(\bi^{(m)},\bj^{(m)})}) \Big) \bigg)
\end{equation*}
is also finite since all $f_{\bi}$ are strictly monotone (for $\ell=0$ it is defined to be identity map).

If $(\bi^{(r+1)},\bj^{(r+1)})$ is not
bad, then using that $A_r$ is finite and the continuity of the maps one can choose $\hat
x_{(\bi^{(r+1)},\bj^{(r+1)})}\in(0,1)\setminus A_r$ for which \cref{it:lla}
holds. By continuity of the maps, there exists a small neighbourhood of $\hat
x_{(\bi^{(r+1)},\bj^{(r+1)})}$ in $(0,1)\setminus A_r$ where \cref{it:lla} still holds, and by the same
argument as before can be used to pick a $x_{(\bi^{(r+1)},\bj^{(r+1)})}$ from this small
neighbourhood for which \cref{it:lla,it:llb,it:llc} all hold. If $(\bi^{(r+1)},\bj^{(r+1)})$ is bad, then
analyticity and the no exact overlaps assumption imply again the existence of
$x_{(\bi^{(r+1)},\bj^{(r+1)})}\in(0,1)\setminus A_r$ that satisfies \cref{it:llb} which completes the
induction.
\end{proof}

\begin{proposition}\label{prop:AnalyticBumpFunc}
Let $f\in \mathcal{S}^{\omega}_\epsilon([0,1])$. There exists a constant $C>0$ such that for
any two finite collections of points
$\mathcal{Y}=\{y_1<\ldots<y_M\}\subseteq[0,1]^M$ and $\mathcal{Z}=\{z_1<\ldots<
z_Q\}\subseteq[0,1]^Q$ with $\mathcal{Y}\cap\mathcal{Z}=\varnothing$ we have the following: for
every $\varepsilon>0$ and $\delta>0$ there exists an analytic function
$g\in\mathcal{S}^{\omega}_\epsilon([0,1])$ such that
\begin{enumerate}[label=(\roman*)]
\item $g(z_i)=f(z_i)$ for every $z_i\in\mathcal{Z}$ and $g(y_i)=f(y_i)$ for every $y_i\in\mathcal{Y}$, moreover,
\begin{equation*}
\sup_{x\in[0,1]}|g(x)-f(x)|<\varepsilon;
\end{equation*}
\item  $g'(z_i)=f'(z_i)$ for every $z_i\in\mathcal{Z}$ and $g'(y_i)=f'(y_i)$ for every $y_i\in\mathcal{Y}$, moreover,
\begin{equation*}
	\sup_{x\in[0,1]}|g'(x)-f'(x)|<\varepsilon;
\end{equation*}
\item $g''(z_i)=f''(z_i)$ for every $z_i\in\mathcal{Z}$, however,
\begin{equation*}
	\left| \frac{g''(y_i)}{g'(y_i)} - \frac{f''(y_i)}{f'(y_i)} \right|\geq \delta \quad\text{
	for every } y_i\in\mathcal{Y},
\end{equation*}
nevertheless, $	\sup_{x\in[0,1]}|g''(x)-f''(x)|<C\cdot \delta +\varepsilon$.
\end{enumerate}
\end{proposition}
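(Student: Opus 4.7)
The plan is to write $g = f + h$ where $h$ is an analytic perturbation of $f$ satisfying the interpolation conditions $h(p) = h'(p) = 0$ for every $p \in \mathcal{Y} \cup \mathcal{Z}$, $h''(z_k) = 0$ for each $z_k \in \mathcal{Z}$, and $h''(y_i) = \pm\,\delta f'(y_i)$ (with freely chosen signs) for each $y_i \in \mathcal{Y}$. Once such an $h$ is produced, (i) and (ii) are immediate, $g''(z_k) = f''(z_k)$ follows from $h''(z_k) = 0$, and the estimate in (iii) follows from $|g''(y_i)/g'(y_i) - f''(y_i)/f'(y_i)| = |h''(y_i)|/|f'(y_i)| = \delta$ using $g'(y_i) = f'(y_i)$.

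To obtain such an $h$ with small $\mathcal{C}^0$ and $\mathcal{C}^1$ norms on $I$ while allowing a larger $\mathcal{C}^0$ bound on $h''$, I would build $h$ from narrow analytic bumps. For each $y_i$, take a bump $b_i(x) = (x - y_i)^2 \phi_\sigma(x - y_i)$ with $\phi_\sigma$ a localized analytic profile of width $\sigma > 0$ satisfying $\phi_\sigma(0) = 1$ (e.g.\ $\phi_\sigma(u) = e^{-u^2/\sigma^2}$). Then $b_i(y_i) = b_i'(y_i) = 0$, $b_i''(y_i) = 2$, and on $I$ one has $\|b_i\|_\infty = O(\sigma^2)$, $\|b_i'\|_\infty = O(\sigma)$, $\|b_i''\|_\infty = O(1)$, while the values of $b_i, b_i', b_i''$ at any fixed point of $I$ bounded away from $y_i$ are exponentially small in $\sigma^{-2}$. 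Setting $h_0 := \sum_{i=1}^M c_i b_i$ with $c_i = \pm\,\delta f'(y_i)/2$ then makes $h_0''(y_i) = \pm\,\delta f'(y_i)$ up to exponentially small cross-contributions from bumps centred at other $y_j$, and the required zero conditions at the remaining nodes hold with the same error.

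To make these conditions hold exactly, subtract a small Hermite interpolation correction. Let $e$ be the unique polynomial of degree $3(Q+M)-1$ with $e(p) = h_0(p)$, $e'(p) = h_0'(p)$ for every $p \in \mathcal{Y} \cup \mathcal{Z}$, $e''(z_k) = h_0''(z_k)$, and $e''(y_i) = h_0''(y_i) \mp\,\delta f'(y_i)$. Because all of these data are of order $O(e^{-c/\sigma^2})$ for some $c > 0$ depending only on the minimal pairwise distance of the nodes, the linearity of the Hermite interpolant gives $\|e\|_{\mathcal{C}^2(I)} = O(e^{-c/\sigma^2})$. Thus $h := h_0 - e$ satisfies the interpolation requirements exactly, with $\|h\|_\infty = O(\delta \sigma^2)$, $\|h'\|_\infty = O(\delta \sigma)$, and $\|h''\|_\infty \leq C \delta + O(e^{-c/\sigma^2})$ where $C$ depends only on $f$; choosing $\sigma$ small enough in terms of $\varepsilon$, $\delta$, $f$, and the node configuration delivers all three sup-norm estimates in (i)--(iii).

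The main obstacle is verifying that $g = f + h$ indeed lies in $\mathcal{S}^\omega_\epsilon([0,1])$. Analyticity on $\mathcal{B}_{2\epsilon}$ is automatic, but the Gaussian profile grows like $e^{\epsilon^2/\sigma^2}$ in the imaginary direction on $\mathcal{B}_\epsilon$, potentially spoiling conditions (B) and (C) when $\sigma$ is taken very small. To circumvent this, one replaces $\phi_\sigma$ by an analytic bump whose holomorphic extension remains controlled on $\mathcal{B}_{2\epsilon}$---for instance a rational profile $\phi_\sigma(u) = \sigma^{2k}/(u^2+\sigma^2)^k$ with poles outside $\mathcal{B}_{2\epsilon}$, or a polynomial bump whose growth on $\mathcal{B}_\epsilon$ is controlled by Bernstein-type estimates. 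The strict inclusions in (B) and (C) for $f$ supply, by compactness, a positive buffer $\eta_0 > 0$ between $f(\overline{\mathcal{B}_\epsilon})$ and $\partial \mathcal{B}_\epsilon$ and keep $|f'|$ strictly bounded away from both $0$ and $1$ on $\overline{\mathcal{B}_\epsilon}$, so any perturbation whose analytic extension satisfies $\sup_{\overline{\mathcal{B}_\epsilon}}(|h|+|h'|) < \eta_0$ preserves membership in $\mathcal{S}^\omega_\epsilon([0,1])$. Arranging this complex-neighborhood control simultaneously with the real-line sharpness of the bump is the key technical point of the proof.
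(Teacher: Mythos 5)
Your approach is genuinely different from the paper's. You perturb additively, $g = f + h$, building $h$ from narrow analytic bumps $(x-y_i)^2 \phi_\sigma(x-y_i)$ and then subtracting a Hermite interpolation polynomial to enforce the interpolation constraints exactly. The paper instead perturbs multiplicatively, $g = f \cdot e^{\varphi \psi A}$, where $\varphi(x) = \prod_{y_i}(x - y_i)^2$ and $\psi(x) = \prod_{z_k}(x - z_k)^4$ are fixed polynomial prefactors that enforce the required vanishing conditions (double zeros at the $y_i$, quartic zeros at the $z_k$) automatically, with $A$ again a sum of Gaussian bumps. The multiplicative form with those prefactors eliminates the need for any Hermite correction: the zero orders at the nodes are built in, and the free parameters $a_i$ in $A$ land directly on the $\delta$-condition for $g''/g' - f''/f'$. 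Your route is conceptually cleaner in separating the linear-algebraic interpolation step from the analytic localisation step, at the cost of tracking the correction polynomial's $\mathcal{C}^2$ norm (which, as you correctly note, is exponentially small in $\sigma^{-2}$ once the nodes are fixed). Both approaches aim for the same uniform-in-$\mathcal{Y},\mathcal{Z}$ constant $C$ and both succeed on the real line.

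The genuine gap is in the final paragraph, on membership in $\mathcal{S}^\omega_\epsilon([0,1])$. You correctly flag that a Gaussian-profile bump blows up like $e^{\epsilon^2/\sigma^2}$ on $\overline{\mathcal{B}_\epsilon}$, but your proposed repair fails. The rational profile $\phi_\sigma(u) = \sigma^{2k}/(u^2 + \sigma^2)^k$ has poles at $u = \pm i\sigma$, and once $\sigma < 2\epsilon$ (which is forced by the smallness requirements) these poles lie \emph{inside} $\mathcal{B}_{2\epsilon}$, so the profile is not analytic where property~\ref{it:a} demands it. Polynomial bumps fare no better: by Bernstein-type estimates a degree-$n$ polynomial that is $O(1)$ on $[0,1]$ can be exponentially large in $n$ on $\overline{\mathcal{B}_\epsilon}$, and a bump of width $\sigma$ needs degree $\asymp \sigma^{-1}$, so the complex-neighbourhood bound again diverges as $\sigma \to 0$. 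Any profile that is truly narrow on the real line and holomorphic on a \emph{fixed} complex strip must, by a Paley--Wiener-type obstruction, grow on that strip as the width shrinks. So your construction does not produce a $g$ satisfying \ref{it:b} and \ref{it:c} without a further idea; the step ``arranging this complex-neighborhood control simultaneously with the real-line sharpness of the bump'' is precisely the unfilled hole, not merely a ``key technical point'' to check. (For context: the paper also uses a Gaussian $A$ and handles this point in a single sentence, so the issue you raise is a real one and deserves more care than either account currently gives it.)
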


We may assume that $\mathcal{Y}\neq\varnothing$, otherwise there is nothing to prove. Our claim is
that with appropriate choices of $a_1,\ldots,a_M>0$ and $\eta_1\ldots,\eta_M>0$ the analytic
function
\begin{equation}\label{eq:BumpFunc}
g(x)\coloneqq f(x)\cdot e^{\varphi(x)\cdot \psi(x)\cdot A(x)},
\end{equation} 
where
\begin{equation*}
\varphi(x)=\prod_{y_i\in\mathcal{Y}} (x-y_i)^2,\; \psi(x) = \prod_{z_i\in\mathcal{Z}} (x-z_i)^4
\;\text{ and }\; A(x)= \sum_{i=1}^{M} a_i\cdot e^{\frac{-(x-y_i)^2}{\eta_i}}
\end{equation*}
satisfies the conditions of~\cref{prop:AnalyticBumpFunc}. We will often use the following simple fact.

\begin{lemma}
Let us fix constants $c, p, \varepsilon>0$ and $q> -p/2$. Then
\begin{equation}\label{eq:BoundA(x)}
\sup_{x\in\mathbb{R}} c\cdot \sigma^q\cdot |x|^p\cdot e^{\frac{-x^2}{\sigma}} \leq \varepsilon
\quad\text{ whenever } 0<\sigma \leq (2e/p)^{\frac{p}{2q+p}}\cdot (\varepsilon/c)^{\frac{1}{q+p/2}}.
%\left( \frac{p\cdot a}{2} \right)^{p/2} \cdot e^{\frac{-p}{2}} \;\text{ for every } p>0,
\end{equation}
\end{lemma}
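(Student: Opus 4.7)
The plan is to reduce to a single-variable calculus optimisation. Fix $\sigma > 0$ and consider the real-valued function $h(x) = |x|^p e^{-x^2/\sigma}$ on $\mathbb{R}$. By symmetry we may restrict to $x \geq 0$, where $h(x) = x^p e^{-x^2/\sigma}$ is smooth and tends to $0$ as $x \to 0^+$ and as $x \to \infty$. Differentiating yields
\[
h'(x) = x^{p-1} e^{-x^2/\sigma}\bigl(p - 2x^2/\sigma\bigr),
\]
so the unique critical point in $(0,\infty)$ is $x_* = \sqrt{p\sigma/2}$, and this is the global maximum. Substituting back gives
\[
\sup_{x\in\mathbb{R}} |x|^p e^{-x^2/\sigma} \;=\; (p\sigma/2)^{p/2}\, e^{-p/2} \;=\; (p/(2e))^{p/2}\,\sigma^{p/2}.
\]

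Multiplying by $c\sigma^q$ yields the exact value of the supremum in the statement:
\[
\sup_{x\in\mathbb{R}} c\sigma^q |x|^p e^{-x^2/\sigma} \;=\; c\,(p/(2e))^{p/2}\,\sigma^{q+p/2}.
\]
Since by assumption $q > -p/2$, the exponent $q + p/2$ is strictly positive, so this quantity is an increasing (indeed, strictly increasing) function of $\sigma \in (0,\infty)$ that tends to $0$ as $\sigma \to 0^+$. Requiring it to be $\leq \varepsilon$ therefore translates into an upper bound on $\sigma$.

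Solving $c\,(p/(2e))^{p/2}\,\sigma^{q+p/2} \leq \varepsilon$ for $\sigma$ gives
\[
\sigma \;\leq\; \Bigl(\tfrac{2e}{p}\Bigr)^{\frac{p/2}{q+p/2}}\cdot \Bigl(\tfrac{\varepsilon}{c}\Bigr)^{\frac{1}{q+p/2}} \;=\; \Bigl(\tfrac{2e}{p}\Bigr)^{\frac{p}{2q+p}}\cdot \Bigl(\tfrac{\varepsilon}{c}\Bigr)^{\frac{1}{q+p/2}},
\]
which is precisely the stated sufficient condition. There is no real obstacle here beyond the elementary optimisation; the only point worth noting is the role of the hypothesis $q > -p/2$, which guarantees that the map $\sigma \mapsto \sigma^{q+p/2}$ is an increasing bijection $(0,\infty) \to (0,\infty)$, so that inverting the inequality is legitimate.
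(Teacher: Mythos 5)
Your proof is correct and takes essentially the same approach as the paper: locate the global maximum at $x^2 = p\sigma/2$, substitute back, and solve the resulting inequality for $\sigma$. The paper states this in two sentences; you have simply filled in the (straightforward) details, including the role of $q > -p/2$ in making the inversion legitimate.
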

\begin{proof}
It is easy to check that the global maximum of the function is at $x^2=p\sigma/2$. Substituting back
this value and using the upper bound on $\sigma$ gives the claim. 
\end{proof}

\begin{proof}[Proof of~\cref{prop:AnalyticBumpFunc}]
During the proof we suppress $\infty$ from the norm $\|\cdot\|_{\infty}$. The argument is
essentially a careful analysis of the function $g$ defined in~\cref{eq:BumpFunc}. Let us first
observe that $g$ is complex analytic on $\mathcal{B}_{2\epsilon}$, and so, satisfies the assumption
\cref{it:a}.

Let us now calculate the derivatives of $g$. Clearly,
\begin{equation*}
	g'= f'\cdot e^{\varphi\cdot \psi\cdot A} + f\cdot e^{\varphi\cdot \psi\cdot A}\big(
	\varphi'\cdot \psi\cdot A + \varphi\cdot \psi'\cdot A +\varphi\cdot \psi\cdot A' \big),
\end{equation*}
where
\begin{align*}
	\varphi'(x)&= \sum_{i=1}^{M} 2(x-y_i) \prod_{y_j\in\mathcal{Y}\setminus\{y_i\}} (x-y_j)^2, \\
	\psi'(x)&= \sum_{i=1}^{Q} 4(x-z_i)^3 \prod_{z_j\in\mathcal{Z}\setminus\{z_i\}} (x-z_j)^4, \\
	A'(x)&= -2\cdot \sum_{i=1}^{M} a_i \frac{x-y_i}{\eta_i} e^{\frac{-(x-y_i)^2}{\eta_i}}.
\end{align*}
Moreover,
\begin{multline*}%\label{eq:gSecondDerivative}
	g'' = f''  e^{\varphi \psi A} + 2f'e^{\varphi \psi A}\big( \varphi' \psi A + \varphi \psi' A
	+\varphi \psi A' \big) + f e^{\varphi \psi A}\big( \varphi' \psi A + \varphi \psi' A
	+\varphi \psi A' \big)^2 \\
	+ f e^{\varphi \psi A}\big( \varphi'' \psi A + \varphi \psi'' A +\varphi \psi A'' +2\varphi'
	\psi' A + 2\varphi' \psi A' +2\varphi \psi' A'\big),
\end{multline*}
where
\begin{align*}
	\varphi''(x)&= 2\sum_{i=1}^{M}  \prod_{\substack{y_j\in\mathcal{Y}\\ y_j\neq y_i}}
	(x-y_j)^2+ 4\sum_{i=1}^{M} (x-y_i) \sum_{\substack{k=1\\k\neq i}}^{M} (x-y_k)
	\prod_{\substack{y_j\in\mathcal{Y} \\ y_j\notin\{y_i,y_k\}}} (x-y_j)^2, \\
	\psi''(x)&= 12\sum_{i=1}^{Q} (x-z_i)^2 \prod_{\substack{z_j\in\mathcal{Z} \\ z_j\neq z_i}}
	(x-z_j)^4 + 16\sum_{i=1}^{Q} (x-z_i)^3 \sum_{\substack{k=1\\k\neq i}}^{Q} (x-z_k)^3
	\prod_{\substack{z_j\in\mathcal{Z} \\ z_j\notin \{z_i,z_k\}}} (x-z_j)^4,  \\
	A''(x)&= 2\cdot \sum_{i=1}^{M} a_i \Big( \frac{2(x-y_i)^2}{\eta_i^2} -\frac{1}{\eta_i} \Big)
	e^{\frac{-(x-y_i)^2}{\eta_i}}. 
\end{align*}
By construction $\varphi(y_i) = \varphi'(y_i) =0$ for every $ y_i\in\mathcal{Y}$ and $\psi(z_i) =
\psi'(z_i) = \psi''(z_i)=0$ for every $z_i\in\mathcal{Z}$, hence,
\begin{equation*}
	g(y_i)=f(y_i) \text{ and } g'(y_i)=f'(y_i) \text{ for every } y_i\in\mathcal{Y},
\end{equation*}
furthermore,
\begin{equation*}
	g(z_i)=f(z_i),\, g'(z_i)=f'(z_i) \,\text{ and } g''(z_i)=f''(z_i) \;\text{ for every }
	z_i\in\mathcal{Z}.
\end{equation*}
Since $\varphi''(y_i)\neq 0$, let us evaluate
\begin{align*}
	g''(y_i) &= f''(y_i) + f(y_i)\varphi''(y_i)\psi(y_i)A(y_i) \\
	&=  f''(y_i) + 2f(y_i) \prod_{\substack{y_j\in\mathcal{Y}\\ y_j\neq y_i}} (y_i-y_j)^2
	\prod_{z_j\in\mathcal{Z}} (y_i-z_j)^4 \bigg( a_i+ \sum_{\substack{j=1 \\ j\neq i}}^{M}
	a_j\cdot e^{\frac{-(y_i-y_j)^2}{\eta_j}} \bigg).
\end{align*}
Dividing both sides by $f'(y_i)=g'(y_i)$ and rearranging we get
\begin{equation*}
	\left| \frac{g''(y_i)}{g'(y_i)} - \frac{f''(y_i)}{f'(y_i)} \right|\geq 2a_i\cdot
	\frac{|f(y_i)|}{|f'(y_i)|}\prod_{y_j\in\mathcal{Y}\setminus\{y_i\}} (y_i-y_j)^2
	\prod_{z_j\in\mathcal{Z}} (y_i-z_j)^4 \geq \delta
\end{equation*}
for all $y_i\in\mathcal{Y}$ as required if we choose
\begin{equation}\label{eq:a_iChoice}
	a_i \coloneqq \frac{\delta \cdot |f'(y_i)|}{2  \cdot |f(y_i)|}
	\Big(\prod_{y_j\in\mathcal{Y}\setminus\{y_i\}} (y_i - y_j)^2 \cdot \prod_{z_j\in\mathcal{Z}}
	(y_i - z_j)^4 \Big)^{-1}.
\end{equation}

It remains to bound the norms $\|g-f\|$, $\|g'-f'\|$ and $\|g''-f''\|$. Using that $|x-z_i|\leq 1$,
we have the trivial bounds
\begin{equation*}
\|\psi\| \leq 1, \quad \|\psi'\| \leq 4Q \quad\text{ and }\quad  \|\psi''\| \leq 16Q^2+12Q.
\end{equation*}
Similarly, using that $|x-y_i|\leq 1$, we also have
\begin{equation*}
\|\varphi\| \leq \min_{y_i\in\mathcal{Y}} |x-y_i|^2, \quad \|\varphi'\| \leq
2M\cdot\min_{y_i\in\mathcal{Y}} |x-y_i|  
\end{equation*} 
and
\begin{equation*}
\|\varphi''\| \leq 4M^2\cdot \min_{y_i\in\mathcal{Y}} |x-y_i| + \bigg\|2\sum_{i=1}^{M}
\prod_{\substack{y_j\in\mathcal{Y}\\ y_j\neq y_i}} (x-y_j)^2\bigg\|.
\end{equation*}
Choose $\eta_i$ so small such that
\begin{equation*}
\eta_i \leq \min \bigg\{ \underbrace{2e\cdot \left(\frac{\varepsilon}{2M^2(16Q^2+12Q)a_i}\right)^2
}_{(C1)}, \underbrace{ \left(\frac{\varepsilon\cdot
e^{3/2}}{8MQa_i(3/2)^{3/2}}\right)^2}_{(C2)}\bigg\}.
\end{equation*}
Several norms can be handled simultaneously:
\begin{multline*}
\max\{ \|\varphi \psi A\|,\|\varphi' \psi A\|, \|\varphi \psi' A\|, \|\varphi \psi'' A\|, \|\varphi'
\psi' A\| \} \\
\leq \sum_{i=1}^M 2M(16Q^2+12Q)a_i |x-y_i| \cdot e^{\frac{-(x-y_i)^2}{\eta_i}} \leq \varepsilon
\end{multline*}
by~\cref{eq:BoundA(x)} and $(C1)$ (with the choice $p=1, q=0$ and $c=2M(16Q^2+12Q)a_i$). Two more
norms can be handled together:
\begin{equation*}
\max\{ \|\varphi \psi A'\|, \|\varphi \psi' A'\| \}
	\leq \sum_{i=1}^M 8Qa_i \frac{|x-y_i|^3}{\eta_i} \cdot e^{\frac{-(x-y_i)^2}{\eta_i}} \leq \varepsilon
\end{equation*}
by~\cref{eq:BoundA(x)} and $(C2)$ (with the choice $p=3, q=-1$ and $c=8Qa_i$). These bounds already
imply that $\|g-f\|\leq (e^{\varepsilon}-1)\cdot \|f\|$ and $\|g'-f'\|\leq
(e^{\varepsilon}-1)\cdot\|f'(x)\|+3\varepsilon e^{\varepsilon} \cdot\|f(x)\|$. Also, by choosing the
values of $\eta_i$ possibly smaller, one can ensure that
$g(\overline{\mathcal{B}_\epsilon})\subseteq\mathcal{B}_\epsilon$ and $0<|g'(x)|<1$ for every
$x\in\overline{\mathcal{B}_\epsilon}$, hence, $g$ satisfies \cref{it:b} and \cref{it:c}, and in
particular, $g\in\mathcal{S}_\epsilon^\omega([0,1])$.

The remaining three norms, $\|\varphi'' \psi A\|,\|\varphi \psi A''\|$ and $\|\varphi' \psi A'\|$
require additional care. The trivial bounds can not be blindly used in some of the expressions when
$x$ is too close to one of the $y_i$. We demonstrate this on $\|\varphi'' \psi A\|$ and leave the
other two to the reader since the arguments are analogous.

Besides $\eta_i\leq\min\{(C1),(C2)\}$, we need further restrictions on $\eta_i$. Assume that
\begin{equation}\label{eq:eta1/3Bound}
	\eta_i\leq \min \bigg\{ \underbrace{ \Big( \frac{\varepsilon\sqrt{2e}}{4M^3a_i} \Big)^2
	}_{(C3)}, \underbrace{ \frac{\varepsilon e}{2M^2a_i} }_{(C4)} \bigg\},\qquad
	\eta_i^{1/3} < \frac{1}{2} \min\big\{ y_{i+1}-y_i, y_i-y_{i-1} \big\} 
\end{equation}
and
\begin{equation}\label{eq:etaOtherBound}
	\sum_{i=1}^M 2a_i\cdot e^{-\eta_i^{-1/3}} < \varepsilon.
\end{equation}
Clearly all these conditions can be simultaneously satisfied. Using the bound on $\|\varphi''\|$,
\begin{equation*}
	\big|\varphi''(x)\cdot \psi(x)\cdot A(x)\big|\leq  \bigg| 2\psi(x)\cdot A(x)\cdot
	\sum_{i=1}^{M}  \prod_{\substack{y_j\in\mathcal{Y}\\ y_j\neq y_i}} (x-y_j)^2 \bigg| +
	4M^2\cdot \sum_{i=1}^{M} a_i|x-y_i| \cdot e^{\frac{-(x-y_i)^2}{\eta_i}}.
\end{equation*}
The second term is $\leq \varepsilon$ because we can apply~\cref{eq:BoundA(x)} and $(C3)$. The first
term is a double sum which we split into two parts
\begin{equation*}
	\underbrace{2\psi(x)\cdot  \sum_{i=1}^{M} a_i e^{\frac{-(x-y_i)^2}{\eta_i}}
	\prod_{\substack{y_j\in\mathcal{Y}\\ y_j\neq y_i}} (x-y_j)^2}_{=: I(x)} +
	2 \sum_{k=1}^{M}  a_k  e^{\frac{-(x-y_k)^2}{\eta_k}} \underbrace{\sum_{\substack{i=1 \\
	i\neq k}}^{M} \prod_{\substack{y_j\in\mathcal{Y}\\ y_j\neq y_i}} (x-y_j)^2}_{\leq
      M(x-y_k)^2}.
\end{equation*} 
We can apply~\cref{eq:BoundA(x)} again to the second term and then $(C4)$ to see that the second
term is bounded above by $\varepsilon$. What remains is to bound $I(x)$. This is where we distinguish whether
$x$ is close to a $y_i$ or not. Recall, we assume~\cref{eq:eta1/3Bound}. If $x$ is not too close to
any of the $y_i$ in the sense that $x\in\bigcap_{i=1}^M (y_i-\eta_i^{1/3}, y_i+\eta_i^{1/3})^C$,
then we use the trivial bounds
\begin{equation*}
	I(x) \leq \sum_{i=1}^{M} 2a_ie^{-\eta_i^{-1/3}} \stackrel{\eqref{eq:etaOtherBound}}{<} \varepsilon.
\end{equation*}
So assume $x\in(y_i-\eta_i^{1/3}, y_i+\eta_i^{1/3})$ for some $y_i\in\mathcal{Y}$. Since $x$ is
still far enough from the other $y_j$, we just use the same bound there:
\begin{equation*}
	I(x) \leq 2a_i \psi(x) \prod_{y_j\in\mathcal{Y}\setminus\{y_i\}} (x-y_j)^2 +
	\sum_{\substack{j=1 \\ j\neq i}}^{M} 2a_j e^{-\eta_j^{-1/3}}
	\stackrel{\eqref{eq:etaOtherBound}}{<} \varepsilon + 2a_i \psi(x)
	\prod_{y_j\in\mathcal{Y}\setminus\{y_i\}} (x-y_j)^2.
\end{equation*}
In the final term, we substitute the value of $a_i$ from~\cref{eq:a_iChoice} and $\psi(x)$ to get
\begin{align*}
	I(x)&\leq \varepsilon+ \delta \cdot \frac{|f'(y_i)|}{|f(y_i)|} \prod_{\substack{y_j\in\mathcal{Y} \\
	y_j\neq y_i}} \frac{(x-y_j)^2}{(y_i - y_j)^2}  \prod_{z_j\in\mathcal{Z}}
	\frac{(x-z_j)^4}{(y_i - z_j)^4} \\
	&\stackrel{\eqref{eq:eta1/3Bound}}{\leq}  \varepsilon+  \delta \cdot
	\frac{|f'(y_i)|}{|f(y_i)|}\prod_{\substack{y_j\in\mathcal{Y} \\ y_j\neq y_i}}\!\! \Big(
	1+\frac{\eta_i^{1/3}}{|y_i-y_j|} \Big)^2 \prod_{z_j\in\mathcal{Z}}\!\! \Big(
      1+\frac{\eta_i^{1/3}}{|y_i-z_j|} \Big)^4. 
\end{align*}
We may assume by choosing $\eta_i$ even smaller if necessary that the product of the final two
products is at most say 2. Since we also assume that $f([0,1])\subset (0,1)$ and $0<|f'(x)|<1$ for
every $x$, we have shown that $I(x)\leq C\cdot\delta+\varepsilon$ for some constant $C>0$ depending
only $f$. This completes the bound for $\| \varphi'' \psi A\|$. 
\end{proof}

%%%%%%%%%%%%%%%%%%%%%%%%%%%%%%%%%%%%%%%%%%%%%%%%%%%%%%%%%%%%%%%%%%%%%%
\subsection{Proof of~\cref{thm:ESCOpenDense}}

The main idea of the proof of \cref{thm:ESCOpenDense} is to apply~\cref{prop:AnalyticBumpFunc} to
each map $f_i$ of the IFS with appropriately chosen collections of points $\mathcal{Y}_i$ and
$\mathcal{Z}_i$ using~\cref{lem:Pointsx_ij} to get the maps $(g_i)_{i\in\mathcal{I}}$. We then lift
the IFS $(g_i)_{i\in\mathcal{I}}$ as in~\cref{eq:LiftedIFS} to obtain the dual IFS
$(G_i)_{i\in\mathcal{I}}$ and show that this
IFS satisfies the SSC. Then~\cref{thm:ESCOpenDense} follows immediately from~\cref{thm:DualSSC} and
\cref{lem:cont}.

\begin{proof}[Proof of \cref{thm:ESCOpenDense}] 
By \cref{thm:DualSSC}, it is enough to show $\{\Phi\in\mathfrak{S}_N:\Phi^*\text{
satisfies the SSC}\}$ is open and dense in $\mathfrak{S}_N$ with respect to the metric $d_2$. The
set is open by \cref{lem:cont} and so it is enough to show that it is dense.

Let $\delta>0$ and $\Phi=(f_i)_{i\in\mathcal{I}}\in\mathfrak{S}_N$ be arbitrary but fixed. We may
assume that $\Phi$ has no exact overlaps, since otherwise we can make an arbitrarily small
perturbation to remove them. Choose
$n\geq 1$ such that $c_{\max}^n(K-k)<\delta/3$, where $k<K$ is chosen such that
$F_i(k,K)\subseteq(k,K)$ for every $i\in\mathcal{I}$, where $\Phi^*=(F_i)_{i\in\mathcal{I}}$ is the
dual IFS of $\Phi$. Recall that,
$\mathcal{B}_n=\{(\bi,\bj)\in\Sigma_n\times\Sigma_n:\, i_1<j_1\}$. Let 
$\{x_{\bi,\bj}:\, (\bi,\bj)\in\mathcal{B}_n\}$ be points as in~\cref{lem:Pointsx_ij}. 

Recall $\mathcal{O}_{\bi}(x)= \{x,f_{i_1}(x), f_{i_2i_1}(x),\ldots,f_{i_{|\bi|}\ldots i_1}(x)\}$.
For every $i\in\mathcal{I}$, we define a partition of $\bigcup_{(\bi,\bj)\in\mathcal{B}_n}
\big(\mathcal{O}_{\bi}(x_{\bi,\bj})\cup
\mathcal{O}_{\bj}(x_{\bi,\bj})\big)$ consisting of two elements $\{\mathcal{Y}_i,
  \mathcal{Z}_i\}$ as follows:
\begin{itemize}
	\item $x_{\bi,\bj}\in\mathcal{Y}_{i_1}$ and $x_{\bi,\bj}\in\mathcal{Z}_{j_1}$ if
	  $(\bi,\bj)\in\mathcal{B}_n$ is bad and  
	  \[
	    (F_{\bi}k)(x_{\bi,\bj})\in\mathrm{conv}\left((F_{\bj}k)(x_{\bi,\bj}),(F_{\bj}K)(x_{\bi,\bj})\right);
	  \]
	\item $x_{\bi,\bj}\in\mathcal{Z}_{i_1}$ and $x_{\bi,\bj}\in\mathcal{Y}_{j_1}$ if
	  $(\bi,\bj)\in\mathcal{B}_n$ is bad and  
	  \[
	    (F_{\bi}k)(x_{\bi,\bj})\notin\mathrm{conv}\left((F_{\bj}k)(x_{\bi,\bj}),(F_{\bj}K)(x_{\bi,\bj})\right);
	  \]
	\item $x_{\bi,\bj}\in\mathcal{Z}_{i_1}$ and $x_{\bi,\bj}\in\mathcal{Z}_{j_1}$ if
	  $(\bi,\bj)\in\mathcal{B}_n$ is not bad;
	\item $y\in\mathcal{Z}_{i_1}$ and $y\in\mathcal{Z}_{j_1}$ for every
	  $y\in\bigcup_{(\bi,\bj)\in\mathcal{B}_n} \big(\mathcal{O}_{\bi}(x_{\bi,\bj})\cup
	\mathcal{O}_{\bj}(x_{\bi,\bj})\big)\setminus\{x_{\bi,\bj}\}$.
\end{itemize}
Recall that by \cref{eq:DefBad} either 
\[
  (F_{\bi}k)(x_{\bi,\bj})\in\mathrm{conv}\left((F_{\bj}k)(x_{\bi,\bj}),(F_{\bj}K)(x_{\bi,\bj})\right)
  \quad\text{or}\quad 
  (F_{\bj}k)(x_{\bi,\bj})\in\mathrm{conv}\left((F_{\bi}k)(x_{\bi,\bj}),(F_{\bi}K)(x_{\bi,\bj})\right).
\]
Hence, the sets $\mathcal{Y}_i$ and $\mathcal{Z}_i$ are well defined. We are now ready to
apply~\cref{prop:AnalyticBumpFunc}.

To each $f_i$ and $\mathcal{Y}_i, \mathcal{Z}_i$ we obtain a map
$g_i\in\mathcal{S}_\epsilon^\omega([0,1])$ which satisfies the properties
listed in~\cref{prop:AnalyticBumpFunc} with the choice $\varepsilon=\delta$, and let
$\Psi=(g_i)_{i\in\mathcal{I}}\in\mathfrak{S}_N$. We construct the dual IFS
$\Psi^*=(G_i)_{i\in\mathcal{I}}$ of $\Psi$ as in \cref{eq:LiftedIFS}, i.e. 
\begin{equation*}
(G_ih)(x)\coloneqq g'_i(x)\cdot h(g_i(x)) + \frac{g''_i(x)}{g'_i(x)}.
\end{equation*}

By \cref{prop:AnalyticBumpFunc}, if $(\bi,\bj)\in\mathcal{B}_n$ is not bad then
$x_{\bi,\bj}\in\mathcal{Z}_{i_1}$ and $x_{\bi,\bj}\in\mathcal{Z}_{j_1}$. Hence,
\begin{equation*}
(G_{\bi}h)(x_{\bi,\bj})=(F_{\bi}h)(x_{\bi,\bj}) \text{ and }
(G_{\bj}h)(x_{\bi,\bj})=(F_{\bj}h)(x_{\bi,\bj})\text{ for every
}h\in\mathcal{C}_\epsilon^\omega([0,1]),
\end{equation*}
and in particular, $\mathrm{conv}\big((G_{\bi}k)(x_{\bi,\bj}), (G_{\bi}K)(x_{\bi,\bj})\big) \cap
\mathrm{conv}\big((G_{\bj}k)(x_{\bi,\bj}),
(G_{\bj}K)(x_{\bi,\bj})\big)=\varnothing$, or equivalently, $G_{\bi}(k,K)\cap G_{\bj}(k,K)=\varnothing$.

Let us now suppose that $(\bi,\bj)\in\mathcal{B}_n$ is bad. Without loss of generality, we may
assume that $x_{\bi,\bj}\in\mathcal{Y}_{i_1}$ and $x_{\bi,\bj}\in\mathcal{Z}_{j_1}$. Then
\begin{equation*}
	(F_{\bi_2^{|\bi|}}h)(f_{i_1}(x_{\bi,\bj}))=(G_{\bi_2^{|\bi|}}h)(g_{i_1}(x_{\bi,\bj})) \text{
	and } (F_{\bj}h)(x_{\bi,\bj})=(G_{\bj}h)(x_{\bi,\bj})\text{ for every
	}h\in\mathcal{C}_\epsilon^\omega([0,1]),
\end{equation*}
and so,
\begin{equation}\label{eq:finstep}
\big|(G_{\bi}h)(x_{\bi,\bj})-(F_{\bi}h)(x_{\bi,\bj})\big| = \left|
\frac{g_{i_1}''(x_{\bi,\bj})}{g_{i_1}'(x_{\bi,\bj})} -
\frac{f_{i_1}''(x_{\bi,\bj})}{f_{i_1}'(x_{\bi,\bj})} \right|\geq \delta\text{ for every
}h\in\mathcal{C}_\epsilon^\omega([0,1]).
\end{equation}
Since
$\mathrm{conv}\left((F_{\bj}k)(x_{\bi,\bj}),(F_{\bj}K)(x_{\bi,\bj})\right)=\mathrm{conv}\left((G_{\bj}k)(x_{\bi,\bj}),(G_{\bj}K)(x_{\bi,\bj})\right)$
has length strictly less than $\delta/3$, and
$(F_{\bi}k)(x_{\bi,\bj})\in\mathrm{conv}\left((F_{\bj}k)(x_{\bi,\bj}),(F_{\bj}K)(x_{\bi,\bj})\right)$,
\cref{eq:finstep} implies that 
$$
\mathrm{dist}\left((G_{\bi}k)(x_{\bi,\bj})),\mathrm{conv}\left((G_{\bj}k)(x_{\bi,\bj}),(G_{\bj}K)(x_{\bi,\bj})\right)\right)>2\delta/3. 
$$
On the other hand, $|(G_{\bi}k)(x_{\bi,\bj}))-(G_{\bi}K)(x_{\bi,\bj}))|\leq c_{\max}^n(K-k)<\delta/3$, and so 
$$
\mathrm{dist}\left((G_{\bi}K)(x_{\bi,\bj})),\mathrm{conv}\left((G_{\bj}k)(x_{\bi,\bj}),(G_{\bj}K)(x_{\bi,\bj})\right)\right)>\delta/3. 
$$
This clearly implies that  $G_{\bi}(k,K)\cap G_{\bj}(k,K)=\varnothing$. Finally, \cref{thm:ESCOpenDense} concludes by \cref{lem:SSCEequiv}.   
\end{proof}

%%%%%%%%%%%%%%%%%%%%%%%%%%%%%%%%%%%%%%%%%%%%%%%%%%%%%%%%%%%%%%%%%%%%%%%%%%%
\section{Conjugation to self-similar IFS}\label{sec:ProofConjugation}

Let $f\in\mathcal{S}_\epsilon^\omega([0,1])$ be arbitrary but fixed. Along the lines of
\cref{eq:H_i(x)}, let us define the map
\begin{equation}\label{eq:H}
\hat{H}_f(x)\coloneqq\sum_{k=0}^\infty\frac{f''}{f'}(f^{\circ k}(x))\cdot (f^{\circ k})'(x),
\end{equation}
where $f^{\circ k}$ denotes the self-composition of $f$ $k$-times. Analogously to the proof of
\cref{thm:H_iAnalytic}, $\hat{H}\in\mathcal{C}^\omega_\epsilon([0,1])$. 

Let us begin the proof of \cref{thm:SubConjugation} with the following observation.

\begin{lemma}\label{lem:conj0}
	Let $f\in\mathcal{S}_\epsilon^\omega([0,1])$. Then for every $a,b\in\mathbb{R}$ there exists
	an invertible map $g\in\mathcal{C}^\omega_\epsilon([0,1])$ such that
	$g''(x)=\hat{H}_f(x)g'(x)$, $g(p)=a$ and $g'(p)=b$, where $p$ is the unique fixed point of
	$f$ in $[0,1]$ and $\hat{H}_f$ is defined in \cref{eq:H}. 
	
	Moreover, for every $g\colon[0,1]\to\mathbb{R}$ such that $g''(x)\equiv \hat{H}_f(x)g'(x)$
	$$
	g(f(x))=f'(p)g(x)+g(p)(1-f'(p)).
	$$
\end{lemma}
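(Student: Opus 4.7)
The plan is to split the lemma into two independent pieces: constructing the invertible analytic $g$ satisfying the second-order linear ODE $g'' = \hat{H}_f g'$ with prescribed initial data at $p$, and then showing that \emph{any} solution of this ODE conjugates $f$ to the affine map on the right-hand side.

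\textbf{Existence of $p$ and $g$.} The fixed point $p \in [0,1]$ exists and is unique by the Banach fixed-point theorem, since $f([0,1]) \subseteq [0,1]$ and $|f'|<1$ by properties \cref{it:b,it:c}. The ODE $g'' = \hat{H}_f g'$ is first-order linear in $h := g'$, so the solution with $g(p)=a$, $g'(p)=b$ is explicitly
\[
g'(x) = b \exp\!\left(\int_p^x \hat{H}_f(t)\, dt\right), \qquad g(x) = a + \int_p^x g'(s)\, ds.
\]
Analyticity of $g$ on $\mathcal{B}_\epsilon$ follows from analyticity of $\hat{H}_f$, itself established exactly as in \cref{thm:H_iAnalytic} (uniform convergence of the defining series on $\mathcal{B}_\epsilon$ and Morera's theorem). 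Since $g'$ equals $b$ times a strictly positive function, for $b \neq 0$ it has constant nonzero sign and $g$ is strictly monotone, hence invertible onto its image; invertibility therefore implicitly requires $b \neq 0$ (the case $b = 0$ gives the trivial non-invertible constant solution).

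\textbf{Key identity and that $g \circ f$ solves the same ODE.} The crucial identity, which comes straight from the series definition of $\hat{H}_f$ by peeling off the $k=0$ term and re-indexing $f^{\circ(k+1)}(x) = f^{\circ k}(f(x))$ together with the chain rule, is
\[
\hat{H}_f(x) = \frac{f''(x)}{f'(x)} + f'(x)\cdot \hat{H}_f(f(x)).
\]
Given any solution $g$, set $G(x) := g(f(x))$. Differentiating twice and substituting $g''(f(x)) = \hat{H}_f(f(x)) g'(f(x))$ gives
\[
\frac{G''(x)}{G'(x)} = f'(x)\hat{H}_f(f(x)) + \frac{f''(x)}{f'(x)} = \hat{H}_f(x),
\]
so $G$ also solves the ODE.

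\textbf{Uniqueness and identification.} The solution space of the linear ODE $u'' = \hat{H}_f u'$ is two-dimensional (spanned by the constant $1$ and any non-constant solution), so a solution is determined by $(u(p), u'(p))$. The candidate $h(x) := f'(p) g(x) + g(p)(1-f'(p))$ is an affine combination of $g$ and the constant solution and hence solves the ODE; moreover $h(p) = g(p) = G(p)$ and $h'(p) = f'(p) g'(p) = G'(p)$ (using $f(p)=p$). By uniqueness $G \equiv h$, which is precisely the stated functional equation. The only mildly subtle step is the recursive identity for $\hat{H}_f$ --- once that is in hand everything else is routine linear ODE theory.
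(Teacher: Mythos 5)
Your proof is correct, but the second half takes a genuinely different route from the paper. Both you and the paper construct $g$ the same way, as $g(x) = a + b\int_p^x \exp(\int_p^z \hat H_f)\,dz$, and both note that analyticity of $\hat H_f$ carries over. Where you diverge is in establishing the functional equation $g(f(x)) = f'(p)g(x) + g(p)(1-f'(p))$. The paper works \emph{explicitly}: it integrates $\hat H_f$ term by term to get $\int_p^x\hat H_f = \log\prod_{k\geq 0} f'(f^{\circ k}(x))/f'(p)$, identifies the normalized antiderivative $\hat g(x) = \int_p^x \exp(\int_p^z\hat H_f)\,dz$ with the Koenigs linearization $\lim_{n\to\infty}(f^{\circ n}(x)-p)/f'(p)^n$, and reads off $\hat g\circ f = f'(p)\hat g$ directly from that limit. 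You instead isolate the cocycle identity $\hat H_f(x) = \frac{f''(x)}{f'(x)} + f'(x)\hat H_f(f(x))$ (which is exactly the statement $F\hat H_f = \hat H_f$, i.e.\ that $\hat H_f$ is the fixed point of the dual operator $F$ from \cref{eq:LiftedIFS}), use it to show $g\circ f$ solves the same second-order linear ODE $u'' = \hat H_f u'$, match initial data at $p$, and invoke ODE uniqueness. Your route avoids computing the limit representation and makes the underlying mechanism (the $F$-invariance of $\hat H_f$) more visible; the paper's route buys an explicit Koenigs-type formula for the conjugacy which is conceptually informative in its own right. One small remark you make correctly: the constructed $g$ is invertible only for $b\neq 0$, a caveat the lemma statement glosses over but which matches how the paper actually uses it.
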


\begin{proof}
	Using the analyticity of $\hat{H}$, we get that the map
	$g(x)=b\int_p^xe^{\int_p^z \hat{H}(y)dy}dz+a$ is in $\mathcal{C}^\omega_\epsilon([0,1])$
	with $g(p)=a$ and $g'(p)=b$.
	
	Now, integrating $\hat{H}$	
	\[
	\int_p^x \hat{H}(y)dy=\sum_{k=0}^\infty\left(\log\left(f'(f^{\circ k}(x))\right)-\log
	f'(p)\right)=\log\left(\prod_{k=0}^\infty\frac{f'(f^{\circ k}(x))}{f'(p)}\right).
	\]
	Moreover, letting
	\[
	\hat{g}(x)\coloneqq\int_p^xe^{\int_p^z \hat{H}(y)dy}dz,
	\]
	and using the previous equation, we get that 
	\begin{equation}\label{eq:ghat}
	\hat{g}(x)=\lim_{n\to\infty}\frac{(f^{\circ n})(x)-p}{(f'(p))^n}
	\end{equation}
	is also analytic. 
	
	Using \cref{eq:ghat}, it is easy to see that $\hat{g}(f(x))\equiv f'(p)\hat{g}(x)$. Also,
	for any map $g$ with $g''(x)\equiv \hat{H}_f(x)g'(x)$, we have $g(x)\equiv
	g'(p)\hat{g}(x)+g(p)$ for every $x\in[0,1]$. Thus,
	\[
	g(f(x))=g'(p)\cdot \hat{g}(f(x))+g(p)=g'(p)\cdot f'(p) \hat{g}(x)+g(p)=f'(p)\cdot g(x)+g(p)(1-f'(p)),
	\]
	which had to be proven.
\end{proof}

\begin{proof}[Proof of~\cref{thm:SubConjugation}]
	Let $\Phi=(f_i)_{i\in\mathcal{I}}\in\mathfrak{S}_N$. The first assertion of
	\cref{thm:SubConjugation} follows by applying \cref{lem:conj0} for $f_1$ and considering the
	IFS $(g\circ f_i\circ g^{-1})_{i\in\mathcal{I}}$.
	
	The assertion \cref{it:conj2} of \cref{thm:SubConjugation} follows by
	applying \cref{it:conj1} to the sub-IFS $(f_{\bi},f_{\bj})$, so we finish the proof by
	showing \cref{it:conj1}.
	
	Let $\Phi=(f_i)_{i\in\mathcal{I}}\in\mathfrak{S}_N$. First, suppose that $H(x)\coloneqq
	H_{\bi}(x)\equiv H_{\bj}(x)$ for every $\bi,\bj\in\Sigma$, where $H_{\bi}$ is the dual
	natural projection defined in \cref{eq:H_i(x)}. Then $H(x)=H_{(i)^{\infty}}=\hat{H}_{f_i}$
	for every $i\in\mathcal{I}$. Hence, by \cref{lem:conj0} there exists
	$g\colon[0,1]\to\mathbb{R}$ analytic such that $g''(x)\equiv H(x)g'(x)$, and
	$g(f_i(x))=f_i'(p_i)g(x)+g(p_i)(1-f_i'(p_i))$ for every $i\in\mathcal{I}$, where
	$f_i(p_i)=p_i$.
	
	Finally, let us suppose that $\Phi=(f_i)_{i\in\mathcal{I}}\in\mathfrak{S}_N$ is conjugated
	to a self-similar IFS $(x\mapsto \lambda_ix+t_i)_{i\in\mathcal{I}}$ by the invertible
	analytic map $g\colon[0,1]\to\mathbb{R}$. Let $p_{\bi}$ be the fixed point of $f_{\bi}$ for
	every $\bi\in\Sigma_*$. 
	Then,
\[
g(f_{\bi}(x)) = \lambda_{\bi} g(x) + t_{\bi}
\quad\text{and}\quad
g(p_{\bi}) = \frac{t_{\bi}}{1-\lambda_{\bi}}.
\]
We have
\[
g'(f_{\bi}(x))\cdot f_{\bi}'(x) = \lambda_{\bi} g'(x)
\quad\text{and}\quad
|g'(p_{\bi})||f'_{\bi}(p_{\bi})-\lambda_{\bi}| = 0.
\]
Since $|g'(p_{\bi})|>0$ we must have $f'_{\bi}(p_{\bi}) = \lambda_{\bi}$.
Differentiating again we get
\[
g''(f_{\bi}(x))\cdot f'_{\bi}(x)^2 +g'(f_{\bi}(x))\cdot f''_{\bi}(x) = \lambda_{\bi} g''(x)\text{
for every }x\in[0,1]
\]
and by using \cref{eq:H=f''/f'}, we have that
\[
\frac{g''(p_{\bi})}{g'(p_{\bi})} =
\frac{f_{\bi}''(p_{\bi})}{f_{\bi}'(p_{\bi})(1-f'_{\bi}(p_{\bi}))}
=\frac{H_{\bi_{|\bi|}^1}(p_{\bi})}{1-f'_{\bi}(p_{\bi})}.
\]
Now, let $\bi,\bj\in\Sigma$ be arbitrary but fixed, and let $\bk_n = \bi_1^{n} \bj_n^1$. Then
$p_{\bk_n}\to \pi(\bi)$ as $n\to\infty$, where we recall that $\pi\colon\Sigma\to\mathbb{R}$ is the
natural projection of $\Phi$ defined in~\cref{eq:natProj}. Furthermore, by \cref{thm:H_iAnalytic} and by
$f'_{\bk_n}(p_{\bk_n})\to0$ as $n\to\infty$
\[
\frac{g''(p_{\bk_n})}{g'(p_{\bk_n})}
=\frac{H_{\bj_1^n \bi_n^1}(p_{\bk_n})}{1-f'_{\bk_n}(p_{\bk_n})}
\to
H_{\bj}(\pi(\bi))\quad \text{as}\quad n\to\infty.
\]
However, the left-hand side converges to $g''(\pi(\bi))/g'(\pi(\bi))$, and so, we get 
$$
\frac{g''}{g'}(\pi(\bi))=H_{\bj}(\pi(\bi))
$$
for every $\bi,\bj\in\Sigma$. In particular, $H_{\bj}(\pi(\bi))=H_{\bk}(\pi(\bi))$ for every
$\bi,\bj,\bk\in\Sigma$. Since the attractor of $\Phi$ is not a singleton (i.e. uncountable), the
maps $H_{\bj}$ are analytic, we have that $H_{\bj}(x)\equiv H_{\bk}(x)$ for all $\bj,\bk\in \Sigma$.
\end{proof}

\begin{proof}[Proof of \cref{thm:DualConj}]
	The claim follows by \cref{thm:SubConjugation} and \cref{thm:H_iAnalytic}.
\end{proof}

\printbibliography

\Addresses

\end{document}